\titleformat{\chapter}[hang]{\bf\huge}{\thechapter}{2pc}{}
\newcommand{\RR}{\mathbb{R}}
\newcommand{\NN}{\mathbb{N}}
\newcommand{\e}{\varepsilon}
\newcommand{\wave}{\square}
\newcommand{\ce}{\mathit{C}}
\newcommand{\De}{\mathscr{D}}
\newtheorem{Prop}{Proposition}[section]
\newtheorem{Th}{Theorem}[section]
\newtheorem{Def}{Definition}[section]
\newtheorem{Rq}{Remark}[section]
\numberwithin{equation}{section}
\numberwithin{Th}{section}
\numberwithin{Prop}{section}
\numberwithin{Lm}{section}
\title{Internal control of systems of semilinear coupled 1-D wave equations}
\author{Christophe ZHANG}
\date{}
\begin{document}
\maketitle
\selectlanguage{english}
\begin{abstract}
We prove the internal controllability of some systems of two coupled wave equations in one space dimension, with one control, under certain conditions on the coupling. To do this we apply the ``fictitious control method'' in two cases: general systems with a ``non-degenerate'' coupling, and a particular case where the coupling is ``degenerate'', namely a cubic coupling. 

In the latter case, our proof requires to find nontrivial trajectories of the control system that go from $0$ to $0$. We build these trajectories by adapting (in $1$ space dimension) a construction developed by Jean-Michel Coron, Sergio Guerrero and Lionel Rosier for the study of coupled parabolic systems.

\end{abstract}

\textbf{Keywords.} Wave equations, coupled systems, exact internal controllability, fictitious control method, algebraic solvability, return method.
\section{Main results and outline of proof}

\subsection{Control systems}\label{ControlSystem}

\

Let $T>0$, and $0< a < b < L$. We study the following class of systems: 
\begin{equation}\label{SystemGeneral}\left\{\begin{aligned}
u_{tt} - \nu_1^2 u_{xx} & = f_1(u,v)+ h, \ x \in [0,L], \\
 v_{tt} - \nu_2^2 v_{xx}  & = f_2(u,v), \ x \in [0,L], \\
u & = 0 \ \textrm{on} \ \{0,L\}, \\
v & = 0 \ \textrm{on} \ \{0,L\},
\end{aligned} \right.\end{equation}
where $h$ is the control, $f_1, f_2 \in \ce^\infty(\RR^2)$, $f_1(0,0)=f_2(0,0)=0$, $\nu_1, \nu_2\neq 0$. In what follows we shall note, for any such $\nu \neq 0$,
$$
\square_\nu:=\partial_{tt} - \nu^2 \partial_{xx}
$$
We will also study the following particular system:
\begin{equation}\label{System}\left\{\begin{aligned}
\square_{\nu_1} u & =  h, \ x \in [0,L], \\
\square_{\nu_2} v & =  u^3, \ x \in [0,L], \\
u & = 0 \ \textrm{on} \ \{0,L\}, \\
v & = 0 \ \textrm{on} \ \{0,L\}.
\end{aligned} \right.\end{equation}

These are systems of coupled semilinear wave equations, with different speeds, which we seek to control with a single control, which takes the form of a source term in the first equation. In both cases, as we will study solutions with $\ce^k$ regularity in order to establish a controllability result with two controls, the initial and final conditions $((u_0,u_1), (v_0, v_1), (u_0^f, u_1^f), (v_0^f,v_1^f))$ have to satisfy some compatibility conditions. For example, the conditions of order $1$ and $2$ read as:
\begin{equation}\label{2Compatibility}
\forall  \beta  \in \{0,L\} ,
\left\{\begin{aligned} u_0(\beta)&=u_1(\beta)=(u_0^f)(\beta)=(u_1^f)(\beta)=0, \\ 
u_0^{\prime\prime}(\beta)&=u_1^{\prime\prime}(\beta)=(u_0^f)^{\prime\prime}(\beta)=(u_1^f)^{\prime\prime}(\beta)=0, \\
  v_0(\beta)&=v_1(\beta)=(v_0^f)(\beta)=(v_1^f)(\beta)=0, \\
  v_0^{\prime\prime}(\beta)&=v_1^{\prime\prime}(\beta)=(v_0^f)^{\prime\prime}(\beta)=(v_1^f)^{\prime\prime}(\beta)=0.  
 \end{aligned}\right.
\end{equation}

To write the compatibility conditions of order $k\geq 3$, the idea is to first write the time derivatives of $u$ and $v$ as a function of their lower order space and time derivatives. 

There exists a multivariate polynomial $Q_{n,i}^{f_i}$ such that
\begin{equation}\label{TimePolynomial}
\left(\frac{d}{dt}\right)^n \left(f_i(u,v)\right)= Q_{n,i}^{f_i}\left(J_t^n(u,v)\right), \ i=1,2, 
\end{equation}
where $J_t^n(u,v)$ denotes the $n$-jet of time derivatives of $u$ and $v$, that is
$$\left(u, v, u_t, v_t,\cdots,\partial_t^n u,\partial_t^n v\right).$$

Now, define by recurrence the following family of operators:
\begin{equation}\label{CompatibilityOperators}
\left\{\begin{aligned}
\mathcal{D}_1^{(i)}&=\partial_t \\
\mathcal{D}_2^{(i)}&=\partial_{xx}+f_i(\cdot,\cdot), \\
 \mathcal{D}_n^{(i)}&=\partial_{xx} \circ \mathcal{D}_{n-2}^{(i)} + Q_{n-2,i}^{f_i}\left(J_t^{n-2}(\cdot,\cdot)\right), \ \textup{for} \ 3\leq n \leq k.
\end{aligned}\right.
\end{equation}
Then, near the corners $\Gamma:=\{(0,0), (0,L), (T,0), (T,L)\}$, using the equations of system \eqref{SystemGeneral} and keeping in mind that the control $h$ is supported away from the corners, we have
\begin{equation}\label{LocalComp}
\left\{\begin{aligned}
\partial_{t}^n u&=\mathcal{D}_n^{(1)} (u,v) \\
\partial_{t}^n v&=\mathcal{D}_n^{(2)} (u,v)
\end{aligned}\right.
\end{equation}

Now, thanks to the boundary conditions,
$$\partial_t^n u (c)=\partial_t^n v (c)=0, \ \forall c \in \Gamma, \forall n \leq k.$$
Moreover, it is clear thanks to the recurrence in \eqref{CompatibilityOperators} that there exist multivariate polynomials $P_{n,i}^{f_i}$ such that:
\begin{equation}\label{CompOpFinalForm}
\mathcal{D}_n^{(i)} (u,v) =P_{n,i}^{f_i}\left(J_x^n(u,v), J_x^{n-1}(u_t,v_t), J_t^n(u,v)\right), \forall n\leq k, i=1,2,
\end{equation}
where $J_x^n(u,v)$ denotes the $n$-jet of space derivatives f $u$ and $v$. Now, \eqref{LocalComp} can be written in the corners using only $u_0, u_1, u_0^f, u_1^f, v_0, v_1, v_0^f, v_1^f$, which gives the following compatibility conditions of order $k$:
\begin{equation}\label{CoupleCompatibility}
\left\{ \begin{aligned}
P_{n,i}^{f_i}\left(J_x^n(u_0,v_0)(0), J_x^{n-1}(u_1,v_1)(0), (0,\cdots,0)\right)&=0, \\
P_{n,i}^{f_i}\left(J_x^n(u_0,v_0)(L), J_x^{n-1}(u_1,v_1)(L), (0,\cdots,0)\right)&=0, \\
P_{n,i}^{f_i}\left(J_x^n(u_0^f,v_0^f)(0), J_x^{n-1}(u_1^f,v_1^f)(0),  (0,\cdots,0)\right)&=0,\\
P_{n,i}^{f_i}\left(J_x^n(u_0^f,v_0^f)(L), J_x^{n-1}(u_1^f,v_1^f)(L), (0,\cdots,0)\right)&=0,
\end{aligned}\right.
\quad \forall n\leq k, i=1,2.
\end{equation}
The existence and unicity of solutions to these systems can be derived from  TaTsien Li's general results on quasilinear wave equations (see \cite{LiRao} or \cite[chapter 5, section 5.2]{Li}).

The method we present yields two internal controllability results. The first is a local result:
\begin{Th}\label{NonDegenerateResult}
Let $R>0$, and $0\leq a < b \leq L$, $T>0$ such that
\begin{equation}\label{TimeCondTh1}
T > 2(L-b)\max\left(\frac{1}{\nu_1}, \frac{1}{\nu_2}\right), \ T>2a\max\left(\frac{1}{\nu_1}, \frac{1}{\nu_2}\right).\end{equation}
If 
\begin{equation}\label{NonDegenerateCoupling}
\frac{\partial f_2}{\partial u} (0,0) \neq 0, \end{equation}
then there exists $\eta >0$ such that for initial and final conditions 
$$((u_0,u_1),  (v_0, v_1), (u_0^{f}, u_1^{f}),(v_0^f, v_1^f) ) \in \left(B_{\ce^{11}([0,L])} \times B_{\ce^{10}( [0,L])}(0, \eta)\right)^4$$
where $B_{\ce^k}(0,\eta)$ denotes the ball centered in $0$ and with radius $\eta$ in the usual $\ce^k$ topology, satisfying \eqref{CoupleCompatibility} at the order $11$,
there exists $h \in \ce^6 ([0,T]\times[0,L])$ such that
\begin{equation}\label{SupportsTh1}
\textup{supp} \ h \subset [0, T] \times [a, b],\end{equation}
and such that the corresponding solution $(u,v)\in \ce^6([0,T]\times[0,L])^6$ of \eqref{SystemGeneral} with initial values $((u_0,u_1), (v_0, v_1))$ satisfies
$$\left\{\begin{aligned}u(T,\ \cdot \ )&=u_0^{f},&
u_t(T,\ \cdot \ )&=u_1^{f},\\
v(T, \ \cdot \ )&=v_0^f,&
v_t(T, \ \cdot \ )&=v_1^f\end{aligned}\right.$$
and 
\begin{equation}
\label{NonDegenerateEstimate}
\|(u,v,h)\|_{(\ce^6)^3} \leq  R.
\end{equation}

\end{Th}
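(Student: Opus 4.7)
The strategy is the fictitious control method: one first proves controllability of the system with two internal controls (an ``analytic'' step), then uses the coupling to eliminate the second control via ``algebraic solvability''.

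\textbf{Step 1 (controllability with two controls).} I would first augment the system with a second internal source $H_2$ and look for a trajectory $(U,V)$ of
$$\square_{\nu_1}U=f_1(U,V)+H_1,\qquad \square_{\nu_2}V=f_2(U,V)+H_2$$
joining the prescribed initial and final data, with $H_1,H_2$ supported in $[0,T]\times[a,b]$ and compactly supported in $t\in(0,T)$. The linearization at $0$ decouples into two scalar $1$-$D$ wave equations, and each of them is internally controllable from $[a,b]$ under the time condition \eqref{TimeCondTh1} by classical HUM/multiplier arguments (the factors $\max(1/\nu_1,1/\nu_2)$ accounting for the slower of the two speeds). A Banach fixed-point argument, run in a ball of $\ce^{11}\times\ce^{10}$ functions respecting the order-$11$ compatibility conditions \eqref{CoupleCompatibility} at the corners $\Gamma$, then yields smooth fictitious controls for the semilinear problem, with norms controlled by the size of the data.

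\textbf{Step 2 (algebraic elimination of $H_2$).} Here the non-degeneracy \eqref{NonDegenerateCoupling} is used. I would look for a correction $\phi$ such that $u:=U-\phi$, $v:=V$ solves \eqref{SystemGeneral} with a single control $h$. Subtracting the two systems, this reduces to
$$f_2(U-\phi,V)-f_2(U,V)=H_2,\qquad h=H_1-\square_{\nu_1}\phi+f_1(U,V)-f_1(U-\phi,V).$$
The first relation is a \emph{pointwise scalar equation} for $\phi$: since $\partial_u f_2(0,0)\neq 0$, the implicit function theorem, applied uniformly on $[0,T]\times[0,L]$ for small $(U,V,H_2)$, produces $\phi=\Phi(U,V,H_2)$ smoothly, with $\phi$ inheriting the regularity of the data. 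Because $\Phi(\cdot,\cdot,0)\equiv 0$, $\phi$ vanishes wherever $H_2$ does, so $\mathrm{supp}\,\phi\subset[0,T]\times[a,b]$; this gives both $u|_{x\in\{0,L\}}=0$ and $\mathrm{supp}\,h\subset[0,T]\times[a,b]$. Differentiating the implicit relation at $t=0,T$ (where $H_2$ and $\partial_t H_2$ vanish) shows that $\phi$ and $\phi_t$ also vanish there, so $(u,v)$ still takes the prescribed initial and final data. The second relation then defines $h$ of the desired regularity, and the estimate \eqref{NonDegenerateEstimate} follows by choosing $\eta$ small enough.

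\textbf{Main obstacle.} The delicate point is the regularity accounting. To have $h\in\ce^6$ one needs $\square_{\nu_1}\phi\in\ce^6$, hence $\phi\in\ce^8$, hence the fictitious trajectory $(U,V)$ in a space of regularity at least $\ce^8$; propagating this through the fixed-point step of Step 1 and through the extension/cutoff operations needed to match the compatibility conditions at the four corners of $\Gamma$ is what forces the initial and final data to be taken in $\ce^{11}\times\ce^{10}$ with compatibility of order $11$. Ensuring that the implicit function step can be carried out uniformly on the whole space-time cylinder, and that the resulting $\phi$ does not destroy the corner compatibility for $(u,v)$, is the technical heart of the proof.
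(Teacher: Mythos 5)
Your proposal is correct, and Step~2 takes a genuinely different route from the paper. The paper eliminates the second fictitious control by invoking Gromov's local inversion theorem for differential operators (Theorem \ref{GromovInversion}), a Nash--Moser type scheme, applied to the operator $\mathscr{D}(u,v,h)=(\square_{\nu_1}u-f_1(u,v)-h,\ \square_{\nu_2}v-f_2(u,v))$ with the infinitesimal inversion of Proposition~\ref{InfinitesimalInversion}, and it recovers the support and initial/final conditions through the locality and normalisation properties of that theorem. You instead notice that, because the single control sits only in the first equation, eliminating $H_2$ amounts to a \emph{scalar pointwise algebraic} equation $f_2(U-\phi,V)-f_2(U,V)=H_2$, which the non-degeneracy \eqref{NonDegenerateCoupling} lets you solve in one shot by the finite-dimensional implicit function theorem, giving $\phi=\Phi(U,V,H_2)$ with $\Phi(\cdot,\cdot,0)\equiv 0$; the support, boundary and corner-compatibility statements then follow automatically from $\mathrm{supp}\,\phi\subset\mathrm{supp}\,H_2$, exactly as you argue. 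This is both simpler (no Nash--Moser machinery) and a bit sharper on regularity: with $(U,V)\in\ce^{11}$ and $H_1,H_2\in\ce^{9}$ from Proposition~\ref{InternalTwoControls1} one gets $\phi\in\ce^{9}$, hence $h\in\ce^{7}$ and $(u,v)\in\ce^{9}\times\ce^{11}$, slightly better than the $(\ce^{6})^{3}$ produced by Gromov's theorem with $\sigma_0=7$. One small imprecision in your sketch: the bottleneck in the regularity count is the fictitious control $H_2$, not the trajectory $(U,V)$ (which is already of higher class). What the paper's Gromov framework buys instead is reusability: the same machinery handles the quasilinear variant (Theorem~\ref{QuasiLinResult}) and the degenerate cubic case of Section~3 where one works around a nontrivial return trajectory, and it would still apply if the infinitesimal inversion genuinely involved derivatives, whereas your one-shot pointwise IFT relies on $f_2$ depending only on $(u,v)$ and not on their derivatives.
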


The second theorem concerns a system that does not satisfy \eqref{NonDegenerateCoupling}. However, it is global, thanks to the system's homogeneity. 
\begin{Th}\label{GlobalResult}
Let $0\leq a < b \leq L$, $T>0$ satisfying \eqref{TimeCondTh1}.
There exists a constant $C>0$ depending on $T$ such that, for any given initial and final conditions 
$$((u_0,u_1),  (v_0, v_1), (u_0^{f}, u_1^{f}),(v_0^f, v_1^f) ) \in \left(\ce^{11}([0,L]) \times \ce^{10}( [0,L])\right)^4$$
satisfying \eqref{CoupleCompatibility} at the order 11,
there exists $h \in \ce^6 ([0,T]\times[0,L])$ such that
\begin{equation}\label{SupportsTh2}
\textup{supp} \ h \subset [0, T] \times [a, b],\end{equation}
and such that the corresponding solution $(u,v)\in \ce^6([0,T]\times[0,L])^2$ of \eqref{System} with initial values $((u_0,u_1), (v_0, v_1))$ satisfies
$$\left\{\begin{aligned}u(T,\ \cdot \ )&=u_0^{f},&
u_t(T,\ \cdot \ )&=u_1^{f},\\
v(T, \ \cdot \ )&=v_0^f,&
v_t(T, \ \cdot \ )&=v_1^f\end{aligned}\right.$$
and
\begin{equation}
\label{DegenerateEstimate}
\|h\|_{\ce^6} \leq C\left(\|(u_0,u_1,u_0^f,u_1^f)\|_{\left(\ce^{11}\times\ce^{10}\right)^2}+\|(v_0,v_1,v_0^f,v_1^f)\|^{\frac13}_{\left( \ce^{11}\times\ce^{10}\right)^2}\right).
\end{equation}

\end{Th}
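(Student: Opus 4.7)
The proof combines three ingredients: the fictitious-control / algebraic-solvability scheme used for Theorem~\ref{NonDegenerateResult}, Coron's return method (needed because the cubic coupling $u\mapsto u^3$ has a trivial linearization at the origin, so \eqref{NonDegenerateCoupling} fails and Theorem~\ref{NonDegenerateResult} cannot be applied directly), and the cubic homogeneity of \eqref{System}, which will promote a local controllability estimate into the global bound \eqref{DegenerateEstimate}. The plan is to construct a nontrivial reference trajectory going from zero to zero, around which the effective coupling becomes non-degenerate, to prove a local controllability result of fixed radius around that trajectory, and finally to use the scaling invariance $(u,v,h)\mapsto(\lambda u,\lambda^3 v,\lambda h)$ to cover arbitrary data.

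\textbf{Step 1 (return trajectory).} Adapting the construction of Coron-Guerrero-Rosier to the hyperbolic 1D setting, I build $(\bar u,\bar v,\bar h)\in\ce^6([0,T]\times[0,L])^3$ solving \eqref{System}, with $\textup{supp}\,\bar h\subset[0,T]\times[a,b]$, starting and ending at rest,
$$
(\bar u,\bar v,\bar u_t,\bar v_t)(0,\cdot)=(\bar u,\bar v,\bar u_t,\bar v_t)(T,\cdot)=0,
$$
and such that $\bar u$ is uniformly bounded below in absolute value on a region $\Omega\subset[0,T]\times[0,L]$ large enough that, under \eqref{TimeCondTh1}, the $\square_{\nu_2}$-characteristic cones through $\Omega$ cover both $\{0\}\times[0,L]$ and $\{T\}\times[0,L]$. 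This is the main technical step: one must simultaneously enforce the nonlinear equations \eqref{System}, the rest conditions at both endpoints, the required $\ce^6$ regularity, and a quantitative non-vanishing of $\bar u$ on a region large enough for the two distinct wave speeds $\nu_1,\nu_2$.

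\textbf{Step 2 (local controllability around $(\bar u,\bar v)$).} Writing $u=\bar u+\tilde u$, $v=\bar v+\tilde v$, the perturbation $(\tilde u,\tilde v)$ solves
$$
\square_{\nu_1}\tilde u=h-\bar h,\qquad \square_{\nu_2}\tilde v=3\bar u^2\,\tilde u+3\bar u\,\tilde u^2+\tilde u^3,
$$
in which the effective coupling coefficient $3\bar u^2$ is bounded away from zero on $\Omega$, reproducing the non-degenerate situation \eqref{NonDegenerateCoupling}. I then run the fictitious-control argument of Theorem~\ref{NonDegenerateResult}: introduce an auxiliary source $H$ in the second equation and solve the two resulting scalar wave equations independently to steer $(\tilde u,\tilde v)$ from the prescribed data at $t=0$ to the prescribed data at $t=T$; eliminate $H$ by algebraic solvability, inverting the identity $3\bar u^2 w=H+(\textup{lower order})$ on $\Omega$ and transporting the resulting correction via compactly supported cut-offs so as to preserve \eqref{SupportsTh2}. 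A Banach fixed-point step absorbs the remaining quadratic and cubic terms, yielding a local controllability result: for some fixed $\eta>0$, any data of size $\leq\eta$ in $(\ce^{11}\times\ce^{10})^4$ satisfying \eqref{CoupleCompatibility} at order $11$ can be controlled with $\|h\|_{\ce^6}\leq C_\eta$.

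\textbf{Step 3 (globalization by homogeneity).} The cubic system \eqref{System} is invariant under
$$
(u,v,h)\ \longmapsto\ (\lambda u,\ \lambda^3 v,\ \lambda h),\qquad \lambda>0,
$$
which also preserves the support condition \eqref{SupportsTh2}, the $\ce^{11}\times\ce^{10}$ and $\ce^6$ regularity classes, and the (polynomial, homogeneous) compatibility conditions \eqref{CoupleCompatibility}. Given arbitrary data, denote $N_u,N_v$ the norms of its $u$- and $v$-parts in $(\ce^{11}\times\ce^{10})^2$, and set
$$
\lambda=\eta^{-1}\bigl(N_u+N_v^{1/3}\bigr)+1,
$$
so that the rescaled data, of sizes $\lambda^{-1}N_u$ and $\lambda^{-3}N_v$ respectively, fits into the ball of radius $\eta$ of Step 2. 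Step 2 yields a control $\tilde h$ for the rescaled problem with $\|\tilde h\|_{\ce^6}\leq C_\eta$; scaling back, $h=\lambda\tilde h$ controls the original problem and satisfies $\|h\|_{\ce^6}\leq\lambda C_\eta\leq C\bigl(N_u+N_v^{1/3}\bigr)$, which is precisely \eqref{DegenerateEstimate}.
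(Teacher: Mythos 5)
Your overall strategy --- return method to replace the trivial equilibrium by a trajectory along which the cubic coupling is effectively non-degenerate, fictitious control plus algebraic solvability to remove one control, cubic scaling to globalize --- matches the paper. Step 3 is essentially right; just drop the ``$+1$'' in your definition of $\lambda$ (as written $\lambda\geq 1$, so for small data $\|h\|_{\ce^6}\leq\lambda C_\eta$ does not vanish with the data and \eqref{DegenerateEstimate} fails; $\lambda=\eta^{-1}(N_u+N_v^{1/3})$ suffices). Step 1 is in the right spirit but glosses over a real point: the region where $\bar u\neq 0$ produced by a single Coron--Guerrero--Rosier-type elementary trajectory is a thin lens (the parameter $\varepsilon$ governing the support width has a fixed upper bound from the construction) and is in general too small to satisfy the GCC over $[a,b]$. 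The paper superposes disjointly supported elementary trajectories so that the union of their non-vanishing regions contains a $\delta$-covering set, and then proves the two-control result (Proposition~\ref{InternalTwoControls}) relative to such covering sets rather than to a single large rectangle; also, $\bar u$ cannot be bounded below near $t=0$ or $t=T$, so the covering set must live in $[\delta,T-\delta]\times[a,b]$.

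The genuine gap is in Step 2, at ``a Banach fixed-point step absorbs the remaining quadratic and cubic terms.'' After eliminating $H$ by inverting $3\bar u^2 w = H$, the correction $w=H/(3\bar u^2)$ re-enters the $u$-equation through $\square_{\nu_1}w$, which costs two derivatives; a Banach-type iteration in a fixed $\ce^k$ space therefore loses regularity at each step and does not close. This is precisely the loss-of-derivatives obstruction the paper emphasizes, and the reason it relies on Gromov's implicit function theorem (Theorem~\ref{GromovInversion}), a Nash--Moser-type inversion with tame estimates and built-in smoothing: the nonlinearity is handled inside Gromov's Newton iteration, not by a contraction. This mechanism also dictates the regularity drop in the statement ($\ce^{11}\times\ce^{10}$ data but only a $\ce^6$ control), through the thresholds $\sigma_0>\max(d,2r+s)$ with $d=r=s=2$. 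As written, your Step 2 does not produce a control with the regularity or the quantitative norm bound needed to run Step 3.
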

\subsection{Related results} \label{Literature}

\

Boundary controllability results for quasilinear first order hyperbolic systems, coupled or not, can be found in Tatsien Li's book (\cite[chapter 3]{Li}).

As for second order systems, results of global boundary and internal controllability for the semilinear wave equation are well known, and were first proven by Enrique Zuazua in \cite{Z1} and \cite{Z2}. These articles introduced the use of HUM (Hilbert Uniqueness Method) to prove controllability results for semilinear and quasilinear equations. Boundary controllability results for scalar systems with $\ce^2$ regularity can be found in \cite[chapter 5]{Li}, and can be adapted to coupled systems, and for $\ce^k$ regularity.

Regarding controllability with a reduced number of controls, results for boundary and internal control of linear wave systems with a reduced number of controls have been proved by Fatiha Alabau-Boussouira (\cite{Al13} and \cite{Al2013}) in any space dimension, using energy methods. This was used by Fatiha Alabau Boussouira in \cite{Al14} to prove the existence of insensitizing controls for a single wave equation, as this is linked to the controllability of linear cascade systems in one space dimension, with the same speed in both equations. 

In the nonlinear case, Louis Tebou followed the same path for semilinear equations in \cite{Tebou}, where he proves the controllability of cascade systems of the form:
\begin{equation}\label{Tebou}
\left\{ \begin{aligned}
\wave u + f(u) & =  h+ \xi, \\
\wave v + f'(u)v &= 0, \\
u =0 ,
v( t, 0) & = \frac{\partial u}{\partial n} \chi_{\Gamma_0} \ \textrm{on} \  \partial \Omega,
\end{aligned}\right.
\end{equation}
where $\Gamma_0$ is a portion of the boundary, and where $f$ is subject to a growth constraint to have global well-posedness. To prove the controllability of such systems, the author first establishes the controllability of a linear problem, using a form of HUM combined with Carleman estimates. Then, using the Schauder fixed-point theorem, he establishes the controllability of the nonlinear problem. 

A similar strategy of proof appears in \cite{CGR} for parabolic systems with cubic coupling. In this case, as for system \eqref{System}, the linearised system around the equilibrium $(0,0,0)$ is not controllable. A classical tool to handle this problem in finite dimension is the use of iterated Lie brackets, see for example \cite[chapter 2]{Isi}, \cite[chapter 3]{NV}, and \cite[chapter 3]{Book}. However, this tool does not work (see for example \cite[chapter 5]{Book}) for many partial differential equations, including our control system \eqref{System}. In that case, a method to handle this situation is the return method. It consists in looking for trajectories going from $0$ to $0$ and such that the linearised system around them is controllable (return trajectories). This method has been introduced in \cite{CoronMCSS} for the stabilisation of driftless control systems and in \cite{CoronJMPA} and in \cite{CoronCRAS} for the controllability of the Euler equations of incompressible fluids. Following this method, in \cite{CGR} the authors build return trajectories, using the structure of the coupling.
Then, using Carleman estimates, they prove the controllability of a family of related parabolic linear systems close to the return trajectory, from which they deduce null-controllability using Kakutani's fixed-point theorem. 

In other cases, a phenomenon of loss of derivatives can occur when working with $\ce^k$ regularities: this can be handled with an inversion theorem of the Nash-Moser type, with a stronger condition on the linearised system.  This was done in the case for quasilinear first order hyperbolic systems, which have been studied in \cite{ACO}, using the ``fictitious control method'', which we will explain in the following section. More precisely the result that has been obtained concerns systems of the form:
$$\left\{\begin{aligned}
u_t + \Lambda_1(u,v)(u,v) + f_1(u,v) &= h, \\
v_t + \Lambda_2(u,v)(u,v) + f_2(u,v) &= 0.
\end{aligned}\right.$$
with
\begin{equation}\label{LinearCondition} \frac{\partial f_2}{\partial u} (0,0) \neq 0.\end{equation}

\subsection{The fictitious control method}\label{ReturnMethod}

\

The fictitious control method was introduced in \cite{CoronMCSS} and \cite{BurgosGarcia}, and  successfully used in \cite{CoronLissy}, \cite{ACO} and\cite{CoronGuilleron}. The idea is to first prove a controllability result with two controls (the fictitious controls), then reduce the number of controls, using some sort of fixed-point theorem.  

In this article, we apply it to second order hyperbolic systems, which present the same problem of loss of derivatives as the systems in \cite{ACO}. This loss of derivatives is handled by using Gromov's notion of algebraic solvability, which allows for differential operators to be inverted in a special way under some condition (infinitesimal inversion) on their derivative. This yields local results around the equilibrium, but we will also work around other trajectories than the stationary trajectory at the equilibrium, in the spirit of the return method, paying close attention to the regularities involved. Indeed, condition \eqref{NonDegenerateCoupling} from Theorem \ref{NonDegenerateResult} is identical to condition \eqref{LinearCondition}, and is crucial to solving the system algebraically (see Proposition \ref{InfinitesimalInversion}). If, as in Theorem \ref{GlobalResult}, it is not satisfied, then, following the spirit of the return method, one can build trajectories of the system along which such a condition is verified, at least on some appropriate spatial domain.

\begin{Rq}In both cases, conditions \eqref{NonDegenerateCoupling} and \eqref{LinearCondition} appear as a sufficient condition on the coupling for internal controllability with a reduced number of controls. However there is no indication (except in trivial cases like the linearised system above) that this sort of condition is necessary.
\end{Rq}

We can thus sum up our strategy of proof in three steps:
\begin{enumerate}
\item When necessary, find smooth trajectories around which Theorem \ref{GromovInversion} can be used.
\item Prove a local controllability result with two controls (fictitious controls) around the return trajectory, using classical boundary control results.
\item Use Theorem \ref{GromovInversion} to reduce the number of controls to one. 
\end{enumerate}

This article is organised as follows: in section 2, we illustrate Gromov's ideas on a linear example, and then prove Theorem \ref{NonDegenerateResult}, which is a case where we do not need to find return trajectories. This will allow us to present how Gromov's ideas can be applied in a nonlinear setting. In section 3 we prove Theorem \ref{GlobalResult}. In this case we need to find return trajectories, and the application of Theorem \ref{GromovInversion} around those trajectories will require a more detailed knowledge of the supports of the return trajectory. Finally section 4 is devoted to possible improvements and further questions on this topic. 

\section{The non-degenerate case}

\

As mentioned in section \ref{ReturnMethod}, we build on the method presented in \cite{ACO}. One of the main ingredients of this method is the theory of differential operators, and the notion of algebraic solvability, which we briefly present in the subsection below. The use of algebraic solvability in the study of control systems first appears in \cite{CoronMCSS}, where it was used to prove the stabilisability of finite dimensional systems without drift with time-varying feedbacks. It was first used in the context of partial differential equations in \cite{CoronLissy} for the control of the Navier-Stokes equation.

But first let us give an informal explanation of our method in the case of a linear system: first we have to rewrite the control problem using differential operators. We note $\mathscr{D}$ the operator associated with the equation of our control problem. Then, the control problem, given initial and final conditions, consists in finding $(u,v)$ with those initial and final conditions, and a control $h$ such that
$$\mathscr{D}(u,v,h)=0.$$
This corresponds to an inversion problem, but with a twist: one has to find an inverse image with the right initial and final conditions. Now, using the solutions to forward- and backward-evolving Cauchy problems corresponding to the initial and final conditions, one can build functions $(u,v)$ with the right initial and final conditions. The nonlinear version of this is done at the beginning of subsection \ref{Correction}. In general, one can do this so that for some $\eta>0$,
$$(h_1, h_2):=\De(u,v,0)=0 , \forall t\notin [\eta, T-\eta].$$ 

\noindent Now suppose $\De$ is invertible. We can make the following computation, the nonlinear version of which is made in subsection \ref{Correction}: 
 $$\De\left((u,v,0)+\De^{-1}(-h_1,-h_2)\right)=(h_1,h_2)-(h_1,h_2)=0.$$
 This seems to yield a solution to the control problem, however we still need to check that the ``corrective term'' does not change the initial and final conditions. This is where Gromov's notion of algebraic solvability comes into play: the right property for $\De$ is not to be invertible, but to be algebraically solvable. That is, that the inverse can also be written as a differential operator:
 $$\De^{-1}(-h_1,-h_2)=\sum_r a_r \partial_r (-h_1) + \sum_r b_r \partial_r (-h_2)$$
 for some functions $a_r, b_r$. With this additional property, one can see that, because $-h_1, -h_2$ vanish for $t\notin [\eta, T-\eta]$, 
 $$\De^{-1}(-h_1,-h_2)=0 , \forall t\notin [\eta, T-\eta].$$ 
 Hence, $(u,v,0)+\De^{-1}(-h_1,-h_2)$ still has the right initial and final conditions. 

\subsection{Differential relations and Gromov's theorem}

\ 

In this section we sum up some basic notions regarding differential operators, and Gromov's local inversion theorem for differential operators. More details can be found in \cite{Gr86}.

In what follows, $\mathcal{Q}$ is the closure of a non-empty open bounded smooth subset of $\RR^2$, and $p,q,r \in \NN^\ast$. We note $n_{r,p} := 2 + p \textup{ card} \{(\alpha_1, \alpha_2) \in \NN^2 \ | \ \alpha_1+\alpha_2 \leq r\}$. Recall the definition of the $r$-jet of a function $z\in \ce^r(\mathcal{Q})^p$:
$$J^r z (t,x)=\left( (t,x),z(t,x), \cdots , \frac{\partial^{|\alpha|} z}{\partial t^{\alpha_1} \partial x^{\alpha_2}}, \cdots, \frac{\partial^r z}{\partial t^{\alpha_1} \partial x^{\alpha_2}}\right) \in \RR^{n_{r,p}}, \ \forall (t,x) \in \mathcal{Q}.$$

\begin{Def}
A map $\mathscr{D} : \ce^r(\mathcal{Q})^p \rightarrow \ce^0(\mathcal{Q})^q$ is a $\ce^\infty$ nonlinear differential operator of order $r$ if there exists $F\in \ce^\infty(\RR^{n_{r,p}},\RR^q)$ such that
$$\mathscr{D}(z)=F(J^r z), \ \forall z \in\ce^r(\mathcal{Q})^p.$$
This clearly implies that $\mathscr{D}$ is $\ce^\infty$ (with the usual $\ce^r, \ce^0$ topologies), and we denote by 
$$\mathscr{L}_z : \ce^r(\mathcal{Q})^p \rightarrow \ce^0(\mathcal{Q})^q$$ 
its Fréchet differential at $z\in \ce^r(\mathcal{Q})^p$. 
\end{Def}

We now define some sort of manifold, over which we can invert these operators:
\begin{Def}
A subset $\mathcal{A}$ of $\ce^d(\mathcal{Q})^p$ is a differential relation of order $d\in \NN$ if there exists $\mathcal{R}\subset \RR^{n_{d,p}}$ such that
$$\mathcal{A}=\{z\in \ce^d(\mathcal{Q})^p \ | \ \forall (t,x) \in \mathcal{Q}, J^d z (t,x) \in \mathcal{R}\}.$$
It is said to be open if $\mathcal{R}$ is an open subset of $\RR^{n_{d,p}}$.
For $k\in \NN$, we note
$$\mathcal{A}^{k} := \mathcal{A} \cap \ce^{k}(\mathcal{Q})^p$$
\end{Def}

For classical local inversion theorems, one needs the differential at one point to be invertible. Here the requirement is somewhat stronger: we need the  differential at any point to be invertible, with the extra property that the inverse of each differential is also a linear differential operator.
\begin{Def}
Let $\mathcal{A} \subset \ce^d(\mathcal{Q})^p$ be a differential relation of order $d$, and let $\mathscr{D}$ be a differential operator of order $r$. We say that $\mathscr{D}$ admits an infinitesimal inversion of order $s\in \NN$ over $\mathcal{A}$ if there exists a family of linear differential operators of order $s$
$$z\in \mathcal{A}, \mathscr{M}_z: \ce^s(\mathcal{Q})^q \rightarrow \ce^0(\mathcal{Q})^p,$$
such that:
\begin{enumerate}
\item For every $g\in \ce^s(\mathcal{Q})^q$, $z \mapsto \mathscr{M}_z(g)$ is a differential operator of order d (possibly nonlinear) and it is a $\ce^\infty$-differential operator in $(z,g)$.
\item (Algebraic solvability) For every $z\in \mathcal{A}^{d+r}$,
$$\mathscr{L}_z \circ \mathscr{M}_z = \textrm{Id}_{\ce^{r+s}(\mathcal{Q})}.$$
\end{enumerate}
\end{Def}

We can now state Gromov's inversion theorem (see \cite[Section 2.3.2, main theorem]{Gr86}):
\begin{Th}[Gromov]\label{GromovInversion}
Let $\mathcal{A} \subset \ce^d(\mathcal{Q})^p$ be a non-empty open differential relation  of order $d$, and let $\mathscr{D}$ be a differential operator of order $r$. Assume that $\mathscr{D}$ admits an infinitesimal inversion of order $s$ over $\mathcal{A}$. Let
\begin{equation}
\sigma_0 > \max (d,2r+s),
\end{equation}
\begin{equation}
\nu \in (0, \infty).
\end{equation}
Then, there exists a family of sets $\mathcal{B}_z \subset \ce^{\sigma_0+s}(\mathcal{Q})^q$ and a family of operators $\mathscr{D}_z^{-1}:\mathcal{B}_z \rightarrow \mathcal{A}$ where $z\in \mathcal{A}^{\sigma_0+r+s}$, such that:
\begin{enumerate}
\item (Neighbourhood property) For every $z\in \mathcal{A}^{\sigma_0+r+s}$, $0\in \mathcal{B}_z$ and
$$\mathcal{B}:=\bigcup_{z\in \mathcal{A}^{\sigma_0+r+s}} \{z\} \times \mathcal{B}_z$$
is an open subset of $\ce^{\sigma_0+r+s}(\mathcal{Q})^p \times \ce^{\sigma_0+s}(\mathcal{Q})^q$.
\item (Inversion property) 
\begin{equation}\mathscr{D}\left(\mathscr{D}_z^{-1}(g)\right)=\mathscr{D}(z)+g, \ \forall (z,g) \in \mathcal{B}.\end{equation}
\item (Normalisation property) 
\begin{equation}
\mathscr{D}_z^{-1}(0)=z, \ \forall z \in \mathcal{A}^{\sigma_0+r+s}. \end{equation}
\item (Regularity and continuity)
Let $\sigma_0 \leq \sigma_1 \leq \eta_1$, then for all $z\in \mathcal{A}^{\eta_1+r+s}$ and $g\in \mathcal{B}_z^{\sigma_1+s}:=\mathcal{B}_z \cap \ce^{\sigma_1+s}$,
\begin{equation}
\label{Regularity}
\mathscr{D}^{-1}_z(g) \in \mathcal{A}^k, \quad \forall k < \sigma_1.
\end{equation}
Moreover, 
\begin{equation}\label{Continuity}
(z,g)\mapsto \mathscr{D}^{-1}_z(g) \in \ce^0(\mathcal{A}^{\sigma_0+r+s} \times \mathcal{B}_z^{\sigma_1+s}, \mathcal{A}^k), \quad \forall k< \sigma_1.
\end{equation}
Finally, if $\eta_1 > \sigma_1$, then \eqref{Regularity} and \eqref{Continuity} hold for $k=\sigma_1$.
\item (Locality) For every $(t,x) \in \mathcal{Q}$, and for every $(z_1,g_1), (z_2,g_2) \in \mathcal{B}$, if we have
$$(z_1,g_1)(\tilde{t}, \tilde{x}) =(z_2,g_2)(\tilde{t}, \tilde{x}), \ \forall (\tilde{t}, \tilde{x}) \in B((t,x),\nu) \cap \mathcal{Q},$$
then,
$$\mathscr{D}_{z_1}^{-1}(g_1)(t,x)=\mathscr{D}_{z_2}^{-1}(g_2)(t,x).$$
\end{enumerate}
\end{Th}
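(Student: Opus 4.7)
The plan is a Nash-Moser type iteration that overcomes the loss of $s$ derivatives inherent in the infinitesimal inversion $\mathscr{M}_z$. A naive Newton scheme $z_{n+1}=z_n-\mathscr{M}_{z_n}(\mathscr{D}(z_n)-\mathscr{D}(z)-g)$ cannot converge in any fixed $\ce^k$-norm, because $\mathscr{M}_z$ is a differential operator of order $s$ and controlling the residual via $\mathscr{D}$ of order $r$ costs a further $r$ derivatives. The classical remedy, following Nash, Moser and Hamilton, is to insert a family of smoothing operators $S_\theta$ on $\mathcal{Q}$ — built by mollification on a tubular extension — that satisfy Moser's inequalities $\|S_\theta u\|_{\ce^k}\leq C\theta^{k-j}\|u\|_{\ce^j}$ for $k\geq j$ and $\|(I-S_\theta)u\|_{\ce^j}\leq C\theta^{j-k}\|u\|_{\ce^k}$ for $j\leq k$.

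Given $z\in \mathcal{A}^{\sigma_0+r+s}$ and $g$ small in $\ce^{\sigma_0+s}$, set $z_0=z$ and iterate
$$z_{n+1}=z_n+\mathscr{M}_{S_{\theta_n}z_n}\bigl(S_{\theta_n}g-S_{\theta_n}(\mathscr{D}(z_n)-\mathscr{D}(z))\bigr),\qquad \theta_n=\theta_0^{(3/2)^n}.$$
Write $e_n:=\mathscr{D}(z_n)-\mathscr{D}(z)-g$. The algebraic solvability $\mathscr{L}_w\circ\mathscr{M}_w=\textrm{Id}$ applied at $w=S_{\theta_n}z_n$, combined with a Taylor expansion of $\mathscr{D}$, makes $e_{n+1}$ quadratic in $e_n$ modulo commutator and smoothing remainders. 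Tame Moser-type product estimates then yield convergence of $z_n$ in every $\ce^k$ with $k<\sigma_0+r+s$ to a limit $z_\infty$ satisfying $\mathscr{D}(z_\infty)=\mathscr{D}(z)+g$, so I would set $\mathscr{D}_z^{-1}(g):=z_\infty$ with $\mathcal{B}_z$ the open ball of $g$'s on which this convergence holds. The threshold $\sigma_0>\max(d,2r+s)$ encodes precisely the balance between the quadratic Newton gain and the total loss $2r+s$ per step, together with $\sigma_0>d$ to keep the $z_n$ inside the open differential relation $\mathcal{A}$.

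The main obstacle — and what distinguishes this statement from a generic Nash-Moser theorem — is enforcing strict locality at the prescribed scale $\nu$. A scheme built from global convolution smoothers is only asymptotically local. I would instead construct $S_{\theta_n}$ with kernels supported in a ball of radius $\nu_n$, with $\sum_n \nu_n<\nu$, while still satisfying the Moser estimates up to the regularity range needed — a Jackson-type result for compactly supported mollifiers. Because $\mathscr{M}_w$ and $\mathscr{D}$ are local operators, the increment $z_{n+1}(t,x)-z_n(t,x)$ then depends only on the values of $(z_n,g)$ in $B((t,x),\nu_n)\cap \mathcal{Q}$, so the total dependence radius is $\sum_n \nu_n<\nu$, yielding locality at the limit. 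The normalisation $\mathscr{D}_z^{-1}(0)=z$ is immediate from the scheme, since $e_0=0$ gives $z_n\equiv z$, and the inversion identity follows from $e_n\to 0$. Finally, the regularity and continuity statements follow by tracking the tame estimates uniformly in $(z,g)$ on bounded sets: each $z_n$ depends continuously on $(z,g)$ in the relevant $\ce^k$-topology, and the uniform bounds transfer to the limit.
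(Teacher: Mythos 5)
The paper does not actually prove this theorem: it is stated with a citation to Gromov's monograph (Section 2.3.2 of \emph{Partial Differential Relations}) and used as a black box. So there is no ``paper's own proof'' to compare against, and a referee would not expect you to reconstruct Gromov's argument from scratch here.

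That said, taken on its own terms your sketch is a reasonable high-level description of Gromov's Nash--Moser scheme, and you are right that the locality property is what distinguishes this statement from a generic Nash--Moser implicit function theorem: Gromov does indeed arrange for the smoothing operators to be given by compactly supported kernels so that every operation in the iteration is local, and the summability of the dependence radii then propagates locality to the limit. But as a proof the outline has real gaps. The ``tame Moser-type product estimates then yield convergence'' step is precisely where all of the work lies, and you do not set up the a priori estimates that would let you close the induction at the threshold $\sigma_0 > \max(d, 2r+s)$; in particular, the $+s$ accounting for $\mathscr{M}$ being of order $s$, the $+r$ for $\mathscr{D}$ and another $+r$ for the quadratic Taylor remainder of $\mathscr{D}$ have to be balanced quantitatively against the geometric smoothing gain, not just asserted. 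You also never verify that $S_{\theta_n} z_n$ and the iterates $z_n$ stay inside the open relation $\mathcal{A}$ (which uses $\sigma_0 > d$ together with smallness of the increments in $\ce^d$), nor do you address the neighbourhood property, i.e.\ why the set $\mathcal{B}$ of admissible pairs $(z,g)$ is open in $\ce^{\sigma_0+r+s} \times \ce^{\sigma_0+s}$, which requires the convergence radius to be locally uniform in $z$. Finally, the refined regularity claim ``$\mathscr{D}_z^{-1}(g) \in \mathcal{A}^k$ for all $k<\sigma_1$'' when $g \in \ce^{\sigma_1+s}$ with $\sigma_1 > \sigma_0$ requires a separate bootstrapping argument (interpolation and re-running the estimates at the higher regularity), which your sketch folds into ``tracking the tame estimates uniformly.'' None of these are fatal misconceptions, but each is a genuine piece of the proof that is missing; since the paper delegates all of this to Gromov, citing the reference is the appropriate move rather than a sketch at this level of detail.
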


\begin{Rq}
\label{RqGromov}
The neighbourhood property allows to relate the domains of inversion for each local inversion to each other: local inverses at two ``neighbouring'' points will be defined on domains that have ``neighbouring'' sizes. In particular that means the domains of inversions are bound to overlap. The locality property tells us that when this happens (albeit locally), the images of the local inverses agree locally. In the linear case, this corresponds to the fact that when a function vanishes on an open set, its image by any linear differential operator also vanishes on this open set (see the beginning of the section).
\end{Rq}
\subsection{From two controls to one: algebraic solvability}\label{Correction}

As in the linear case, we first build a trajectory $(u,v)$ with the right initial and final conditions, but with $\De(u,v,0)$ potentially non-zero on some restricted domain. In terms of control theory, this amounts to solving the control problem with \textit{two} controls (the fictitious controls), with restricted supports. In fact, for systems of the form
\begin{equation}\label{SystemGeneral2controls}\left\{\begin{aligned}
\partial_{tt} u - \nu_1^2 u_{xx} & = f_1(u,v)+ h_1, \ x \in [0,L], \\
\partial_{tt} v - \nu_2^2 v_{xx}  & = f_2(u,v)+h_2, \ x \in [0,L], \\
u & = 0 \ \textrm{on} \ \{0,L\}, \\
v & = 0 \ \textrm{on} \ \{0,L\},
\end{aligned} \right.\end{equation}
where $f_1(0,0)=f_2(0,0)=0$, we have the following local controllability result, which is a consequence of boundary control results presented in \cite[chapter , sections 5.2 and 5.3]{Li}:
\begin{Prop} \label{InternalTwoControls1}
Let $k\geq 2$, $0 \leq a < b \leq L$, $T>0$ such that
\eqref{TimeCondTh1} holds.
For every $0<\delta <\min\left(T/2,(b-a)/2\right)$ satisfying 
\begin{equation}\label{delta}
\begin{aligned}
T-2\delta  & >  2(L-b+2\delta)\max\left(\frac{1}{\nu_1}, \frac{1}{\nu_2}\right), \\ 
T -2\delta & >  2(a+2\delta)\max\left(\frac{1}{\nu_1}, \frac{1}{\nu_2}\right),\end{aligned}\end{equation}
there exists $\eta >0$ such that, for initial and final conditions 
$$((u_0,u_1),  (v_0, v_1), (u_0^{f}, u_1^{f}),(v_0^f, v_1^f) ) \in \left(B_{\ce^{k}([0,T]\times [0,L])}(0,\eta) \times B_{\ce^{k-1}([0,T]\times [0,L])}(0,\eta)\right)^4$$
satisfying \eqref{CoupleCompatibility} at the order $k$,
there exist controls $h_1, h_2 \in \ce^{k-2}([0,T]\times [0,L])$ and constants $C_1, C_2>0$ depending on $T, \delta, k$ satisfying 
\begin{align}
\label{ControlSupports1}
&\textup{supp} \ h_i \subset [\delta, T-\delta]\times [a+\delta, b-\delta],
&i=1,2,\\
\label{hSmallness}
&\|h_i\|_{\ce^{k-2}} \leq C_1 \|((u_0,u_1),  (v_0, v_1), (u_0^{f}, u_1^{f}),(v_0^f, v_1^f) )\|_{\left(\ce^k \times \ce^{k-1}\right)^4},
&i=1,2,
\end{align}
such that the corresponding solution of \eqref{SystemGeneral2controls} with initial values $((u_0, u_1), (v_0, v_1))$ satisfies
$$\left\{\begin{aligned}u(T,\ \cdot \ )&=u_0^{f},&
u_t(T,\ \cdot \ )&=u_1^{f},\\
v(T, \ \cdot \ )&=v_0^f,&
v_t(T, \ \cdot \ )&=v_1^f.\end{aligned}\right.$$
\begin{equation}\label{ySmallness}
\|(u,v)\|_{(\ce^k)^2} \leq C_2 \|((u_0,u_1),  (v_0, v_1), (u_0^{f}, u_1^{f}),(v_0^f, v_1^f) )\|_{\left(\ce^k \times \ce^{k-1}\right)^4}.
\end{equation}
\end{Prop}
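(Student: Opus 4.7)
The plan is to reduce this two-control internal controllability result to the boundary controllability of coupled quasilinear wave systems from \cite[chapter 5, sections 5.2 and 5.3]{Li}, by a combination of temporal buffering at the endpoints $t=0,T$ and spatial decomposition of the middle time-slab.

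First, I would introduce temporal buffers via free evolution. Let $(u^-,v^-)$ be the solution of the free system \eqref{SystemGeneral2controls} (with $h_1=h_2\equiv 0$) emanating from $((u_0,u_1),(v_0,v_1))$ on $[0,\delta]$, and let $(u^+,v^+)$ be the backward free solution on $[T-\delta,T]$ with terminal data $((u_0^f,u_1^f),(v_0^f,v_1^f))$. For small enough data, both exist in $\ce^k$ by classical local well-posedness for semilinear wave systems. Setting $(u,v):=(u^-,v^-)$ on $[0,\delta]\times[0,L]$ and $(u,v):=(u^+,v^+)$ on $[T-\delta,T]\times[0,L]$ automatically yields $h_1=h_2=0$ on these two slabs, reducing the problem to controlling \eqref{SystemGeneral2controls} from the Cauchy state $(u^-,v^-,u^-_t,v^-_t)(\delta,\cdot)$ to $(u^+,v^+,u^+_t,v^+_t)(T-\delta,\cdot)$ on $[\delta,T-\delta]\times[0,L]$, with controls supported in $[a+\delta,b-\delta]$.

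On the middle slab, I would decompose the spatial interval as $\Omega_L:=[0,a+2\delta]$, $\Omega_M:=[a+2\delta,b-2\delta]$, $\Omega_R:=[b-2\delta,L]$, and note $\Omega_M\subset[a+\delta,b-\delta]$. On $\Omega_L\times[\delta,T-\delta]$ I would apply Li's boundary controllability for the coupled quasilinear wave system with Dirichlet BC at $x=0$ and boundary controls $(\alpha_L,\beta_L)$ at $x=a+2\delta$, steering the restriction of the Cauchy data at $t=\delta$ to the restriction at $t=T-\delta$; the hypothesis \eqref{delta} reads $T-2\delta>2(a+2\delta)\max(1/\nu_1,1/\nu_2)$, which is exactly the observability time required. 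This produces $(u_L,v_L)\in\ce^k$ solving the free coupled system on $\Omega_L\times[\delta,T-\delta]$; an analogous construction on $\Omega_R\times[\delta,T-\delta]$ yields $(u_R,v_R)$. In the middle rectangle $\Omega_M\times[\delta,T-\delta]$, I would construct $(u_M,v_M)\in\ce^k$ whose full $k$-jet on the four edges matches, respectively, the $k$-jets of $(u_L,v_L)$ at $x=a+2\delta$, of $(u_R,v_R)$ at $x=b-2\delta$, of $(u^-,v^-)$ at $t=\delta$, and of $(u^+,v^+)$ at $t=T-\delta$. Finally, defining $h_i:=\square_{\nu_i}w_i-f_i(u,v)$ with $w_1=u, w_2=v$, these controls vanish wherever $(u,v)$ solves the free system, so $\textup{supp}(h_i)\subset\Omega_M\times[\delta,T-\delta]\subset[a+\delta,b-\delta]\times[\delta,T-\delta]$, and the linear estimates provided by Li's boundary controllability transfer directly to \eqref{hSmallness}--\eqref{ySmallness}.

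The main obstacle is the jet-matching problem in the middle region: the four prescribed $k$-jets on the edges of $\Omega_M\times[\delta,T-\delta]$ must be compatible at the four corner edges for a $\ce^k$ extension to exist. Compatibility holds because, in a neighbourhood of each corner, the four outer pieces all solve the \emph{same} free coupled semilinear wave system with matching lower-order traces; hence by successively using the equations to trade time derivatives for space derivatives (as in the derivation of \eqref{CompatibilityOperators}--\eqref{LocalComp}), all higher cross-derivatives are uniquely determined by the lower-order data and agree across the edges. Once consistency is verified, the extension $(u_M,v_M)$ can be built by a Whitney-type jet extension, or more concretely by polynomial extrapolation from each edge followed by a smooth partition of unity, preserving the $\ce^k$ regularity and the linear dependence on the data.
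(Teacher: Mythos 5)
Your proposal is in the same family as the paper's proof --- it reduces to Li's boundary controllability on the side regions $[0,a+2\delta]$ and $[b-2\delta,L]$, and then declares the controls to be the residual $h_i = \square_{\nu_i} w_i - f_i(u,v)$, supported where the constructed $(u,v)$ ceases to solve the free system. The difference is in how the middle rectangle is filled in. The paper (in the more general Proposition~\ref{InternalTwoControls}, of which this is a special case) avoids any hard jet-matching: it takes the \emph{forward} and \emph{backward} Cauchy solutions on the slightly larger interval $[a+\delta, b-\delta]$ with homogeneous Dirichlet conditions, blends them with a time cut-off $\phi$ ($\phi(0)=1$, $\phi(T-2\delta)=0$), then blends the result with a smooth extension of the boundary-controlled side trajectories using a space cut-off $\xi$. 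The partition-of-unity gluing is automatic and the initial/final/boundary traces are read off from where $\phi$, $\xi$ equal $0$ or $1$; corner compatibility of the final object is therefore never an issue. Your approach instead glues ``hard'' via a Whitney-type extension that matches the full $k$-jets on the four edges of $\Omega_M\times[\delta,T-\delta]$. This can work, but it pushes the difficulty into the corner-compatibility verification, which your sketch leaves at the level of ``all pieces solve the same equation''; one must genuinely check (using the compatibility relations \eqref{2BoundaryCompatibility} between Li's boundary controls and the Cauchy data at the corners) that \emph{every} mixed derivative up to order $k$ agrees, and then invoke a version of Whitney extension on a rectangle with boundary jet data whose operator norm is controlled, so that \eqref{hSmallness}--\eqref{ySmallness} follow. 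These steps are believable but far from free; the paper's cut-off construction sidesteps them entirely. Also note a presentational point: the paper does not actually introduce the free-evolution temporal buffers on $[0,\delta]$ and $[T-\delta,T]$ explicitly --- it works on a time interval of length $T-2\delta$ internally --- whereas you spell this out; making it explicit, as you do, is a small improvement in clarity.
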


\

This result is a particular case of Proposition \ref{InternalTwoControls} which we will prove in the following section, when dealing with a degenerate system.

For now, let $R>0$, $0\leq a < b \leq L$, and let $T>0$ be such that \eqref{TimeCondTh1} holds. 
Let $0<\delta<\min\left(T/2,(b-a)/2\right)/2$ such that \eqref{delta} holds for $2\delta$ (note that it also holds for $\delta$). Define
$$\mathcal{Q}_\delta:=[\delta, T-\delta] \times [a+\delta, b-\delta],$$
$$\mathcal{Q}_{2\delta}:=[2\delta, T-2\delta] \times [a+2\delta, b-2\delta],$$
and let $\mathcal{Q}\subset [0,T]\times [a,b]$ be a smooth closed set such that
$$\mathcal{Q}_\delta \subset \overset{\circ}{\mathcal{Q}}.$$
Define the following nonempty open differential relation of order $2$:
$$\mathcal{A}=\left\{(u,v,h) \in \left(\ce^{2}(\mathcal{Q})\right)^3 \ \middle| \ \forall (t,x) \in \mathcal{Q}, \ \frac{\partial f_2}{\partial u} (u(t,x),v(t,x)) \neq 0 \right\}.$$
We define the following nonlinear differential operator $\mathscr{D}:\ce^2(\mathcal{Q})^3\rightarrow \ce^0(\mathcal{Q})^2$ of order $r=2$:
$$\mathscr{D}\left((u,v,h)\right)=(\wave_{\nu_1} u -f_1(u,v)-h, \wave_{\nu_2} v - f_2(u,v)), \ \forall (u,v,h) \in \ce^2(\mathcal{Q})^3,$$
and its differential at $(u,v,h) \in \ce^2([0,T]\times [0,L])^3$:
$$\mathscr{L}_{(u,v,h)}(\tilde{u},\tilde{v},\tilde{h})=\left(\wave_{\nu_1} \tilde{u} -Df_1 (u,v)\cdot (\tilde{u}, \tilde{v})-\tilde{h}, \ \wave_{\nu_2} \tilde{v} -Df_2 (u,v)\cdot (\tilde{u}, \tilde{v})\right), \ \forall (\tilde{u},\tilde{v},\tilde{h}) \in \ce^2([0,T]\times [0,L])^3.$$

We now have the following result, thanks to the definition of $\mathcal{A}$:
\begin{Prop}\label{InfinitesimalInversion}
$\mathscr{D}$ admits an infinitesimal inversion of order $2$ over $\mathcal{A}$.
\end{Prop}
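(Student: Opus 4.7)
The plan is to construct $\mathscr{M}_z$ explicitly by exploiting the non-degeneracy condition that defines $\mathcal{A}$, namely $\partial f_2/\partial u\,(u,v) \neq 0$ on $\mathcal{Q}$. Given $z = (u,v,h) \in \mathcal{A}$ and $g = (g_1, g_2) \in \ce^s(\mathcal{Q})^2$, I look for $(\tilde u, \tilde v, \tilde h) = \mathscr{M}_z(g)$ satisfying
\begin{align*}
\wave_{\nu_1} \tilde u - \partial_u f_1(u,v)\, \tilde u - \partial_v f_1(u,v)\, \tilde v - \tilde h &= g_1, \\
\wave_{\nu_2} \tilde v - \partial_u f_2(u,v)\, \tilde u - \partial_v f_2(u,v)\, \tilde v &= g_2.
\end{align*}
The system is under-determined (three unknowns for two equations), so the natural strategy is to kill $\tilde v$ and use the second equation as a pointwise algebraic inversion for $\tilde u$.

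Explicitly, I would set
\begin{align*}
\tilde u &:= -\frac{g_2}{\partial_u f_2(u,v)}, \\
\tilde v &:= 0, \\
\tilde h &:= \wave_{\nu_1} \tilde u - \partial_u f_1(u,v)\, \tilde u - g_1.
\end{align*}
This is a linear differential operator in $g$: the assignment $g \mapsto \tilde u$ has order $0$, while $g \mapsto \tilde h$ has order $2$ in $g_2$ (through the wave operator $\wave_{\nu_1}$) and order $0$ in $g_1$. Hence $\mathscr{M}_z$ is a linear differential operator of order $s = 2$, matching the order of $\mathscr{D}$.

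It remains to verify the two defining properties of an infinitesimal inversion. For the algebraic identity $\mathscr{L}_z \circ \mathscr{M}_z = \textrm{Id}$, a direct substitution shows that the second component of $\mathscr{L}_z(\mathscr{M}_z(g))$ reduces to $-\partial_u f_2(u,v)\, \tilde u = g_2$, while the first component reduces to $\wave_{\nu_1}\tilde u - \partial_u f_1(u,v)\,\tilde u - \tilde h = g_1$ by the very definition of $\tilde h$. For the first property, the dependence of $\tilde h$ on $z$ runs through derivatives up to order $2$ of the coefficient $1/\partial_u f_2(u,v)$ (produced by expanding $\wave_{\nu_1} \tilde u$), so $z \mapsto \mathscr{M}_z(g)$ is indeed a nonlinear differential operator of order $d = 2$; joint smoothness in $(z,g)$ follows from $f_1, f_2 \in \ce^\infty$ together with the non-vanishing of $\partial_u f_2(u,v)$ on $\mathcal{A}$.

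No genuine obstacle is expected: the proposition essentially boils down to the observation that, under the pointwise condition defining $\mathcal{A}$, the linearised second equation is algebraically solvable in $\tilde u$ once we choose $\tilde v = 0$, after which the first equation reads as an explicit definition of the corrective control $\tilde h$. The only real bookkeeping is the matching of orders $r = 2$, $d = 2$, $s = 2$, which governs the regularity thresholds required to apply Theorem \ref{GromovInversion}.
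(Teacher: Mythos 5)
Your construction is identical to the paper's: both set $\tilde v = 0$, invert the coupling term pointwise to get $\tilde u = -g_2 / \partial_u f_2(u,v)$, and then read the first equation as the explicit definition of $\tilde h$. The bookkeeping of orders $r=d=s=2$ and the smoothness-in-$(z,g)$ remark also match the paper's argument, so there is nothing to add.
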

\begin{proof}
Let $h_1, h_2 \in \ce^4(\mathcal{Q})$, $(u,v,h) \in \mathcal{A}$. Using the fact that $\frac{\partial f_2}{\partial u}(u,v)$ never vanishes, if we set:
$$\begin{aligned}
\tilde{v}&=0,\\
\tilde{u}&=\displaystyle{-\frac{h_2}{\frac{\partial f_2}{\partial u}(u,v)}},\\
\tilde{h}&=\wave_{\nu_1} \tilde{u} -\frac{\partial f_1}{\partial u}(u,v) \tilde{u}- h_1,
\end{aligned}$$
then we have
$$\mathscr{L}_{(u,v,h)}(\tilde{u},\tilde{v},\tilde{h})=(h_1,h_2).$$
Moreover, the above formulae clearly show that $(u,v,h) \mapsto \mathscr{L}_{(u,v,h)}(\tilde{u},\tilde{v},\tilde{h})$ is a (nonlinear, $\ce^\infty$ with the usual topology of $\ce^2(\mathcal{Q})$) differential operator of order $2$ on $\ce^2(\mathcal{Q})$, and $(u,v,h,\tilde{u},\tilde{v},\tilde{h})\mapsto \mathscr{L}_{(u,v,h)}(\tilde{u},\tilde{v},\tilde{h})$ is also $\ce^\infty$.
\end{proof}

\begin{figure}[h!]
 \centerline{\includegraphics[scale=0.8]{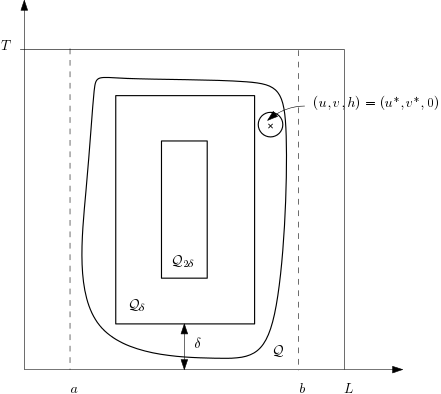}}
 		\caption{\label{fig:Gromov} \textit{Matching trajectories with two controls and with a single control on the appropriate domain.}}
 \end{figure}
We can now apply Theorem \ref{GromovInversion} with $d=2$, $s=2$, $r=2$ $\sigma_0=7$, $\nu=\delta/2$. This yields a collection of open sets, which all contain $0$,
$$\mathcal{B}_z \subset \left(\ce^{9}(\mathcal{Q})\right)^2, \ z \in \mathcal{A}^{11},$$
the open subset of $\left(\ce^{11}(\mathcal{Q})\right)^3 \times \left(\ce^{9}(\mathcal{Q})\right)^2$
$$\mathcal{B}=\bigcup_{z \in \mathcal{A}^{11}} \{z\} \times \mathcal{B}_z,$$
and the collection of operators
$$\mathscr{D}_z^{-1}:\mathcal{B}_z \rightarrow \mathcal{A}, \ z \in \mathcal{A}^{11}.$$
Now, thanks to condition \eqref{NonDegenerateCoupling},
$$(0,0,0) \in \mathcal{A},$$
$$\mathscr{D}(0,0,0)=(0,0),$$
and
$$\left((0,0,0), (0,0) \right) \in \mathcal{B},$$
so that, thanks to the neighbourhood property of Theorem \ref{GromovInversion}, there exists $\e>0$ such that 
\begin{equation}\label{InB} \left(B_{\ce^{11}(\mathcal{Q})}((0,0,0), \e)\right)^3 \times \left(B_{\ce^{9}(\mathcal{Q})}((0,0),\e)\right)^2 \subset \mathcal{B}.\end{equation}
By the continuity property of Theorem \ref{GromovInversion} with $\eta_1=\sigma_1=\sigma_0=7$, there exists $\eta>0$ such that for $\|((u,v,h),(h_1,h_2))\|_{(\ce^{11})^3 \times (\ce^9)^2} \leq \eta$,
$$\|\De^{-1}_{(u,v,h)}(h_1, h_2)\|_{(\ce^6)^3} \leq R.$$
Proposition \ref{InternalTwoControls1} with $k=11$ yields $\eta^\prime > 0$ such that for any initial and final conditions
$$((u_0,u_1),  (v_0, v_1), (u_0^{f}, u_1^{f}),(v_0^f, v_1^f) )\in \left(B_{\ce^{11}([0,T]\times [0,L])}(0,\eta^\prime) \times B_{\ce^{10}([0,T]\times [0,L])}(0,\eta^\prime)\right)^4,$$
there exist two controls $\theta_1, \theta_2 \in \ce^{9}([0,T]\times [0,L])$, supported in $\mathcal{Q}_{2\delta}$ (condition \eqref{ControlSupports1}), that steer system \eqref{SystemGeneral2controls} from the given initial conditions to the given final conditions, with the corresponding trajectory $(u^\ast,v^\ast)$ satisfying \eqref{ySmallness}.
Together with \eqref{InB}, this implies that there exists $\eta^\prime \geq \eta^{\prime\prime} >0$ such that for initial and final conditions
$$((u_0,u_1),  (v_0, v_1), (u_0^{f}, u_1^{f}),(v_0^f, v_1^f) )\in \left(B_{\ce^{11}([0,T]\times [0,L])}(0,\eta^{\prime\prime}) \times B_{\ce^{10}([0,T]\times [0,L])}(0,\eta^{\prime\prime})\right)^4,$$
the corresponding trajectory of system \eqref{SystemGeneral2controls} satisfies
\begin{equation}\label{image}
\mathscr{D}(u^\ast_{|\mathcal{Q}}, v^\ast_{|\mathcal{Q}}, 0)=\left(h_{1|\mathcal{Q}}, h_{2|\mathcal{Q}}\right),\end{equation}
\begin{equation}
\label{hiSmall}
\left((u^\ast_{|\mathcal{Q}}, v^\ast_{|\mathcal{Q}}, 0), (-h_{1|\mathcal{Q}}, -h_{2|\mathcal{Q}})\right) \in \mathcal{B}.\end{equation}
\begin{equation}\label{PreimageSmallness}
\|((u^\ast,v^\ast,0),(h_1,h_2))\|_{(\ce^{11})^3 \times (\ce^9)^2} \leq \min (R,\eta).
\end{equation}

Let us now set, keeping in mind the regularity property of Theorem \ref{GromovInversion} with $\eta_1=\sigma_1=\sigma_0=7$,
$$(u,v,h)=\mathscr{D}_{(u^\ast_{|\mathcal{Q}}, v^\ast_{|\mathcal{Q}}, 0)}^{-1}\left(-h_{1|\mathcal{Q}}, -h_{2|\mathcal{Q}}\right) \in \mathcal{A}^6.$$
Then, by the inversion property of Theorem \ref{GromovInversion}, and \eqref{image},
$$\mathscr{D}\left(u,v,h\right)=\mathscr{D}(u^\ast_{|\mathcal{Q}},v^\ast_{|\mathcal{Q}}, 0)-(h_{1|\mathcal{Q}}, h_{2|\mathcal{Q}})=(0,0).$$
Now, let us show that $(u,v,h)=(u^\ast, v^\ast,0)$ on $\mathring{\mathcal{Q}} \setminus \mathcal{Q}_{\delta^\prime}$. This will allow us to extend $(u,v,h)$ on  $\left([0,T]\times [0,L]\right) \setminus \mathcal{Q}$. 

Let $(t,x) \in \displaystyle{\mathring{\mathcal{Q}} \setminus \mathcal{Q}_{\delta}}$. As the $h_i$ are supported in $\mathcal{Q}_{2\delta}$, 
\begin{equation} \left((u^\ast, v^\ast, 0), (-h_1, -h_2)\right)=\left((u^\ast, v^\ast, 0), (0,0)\right) \ \ \textrm{on} \ \displaystyle{B\left((t,x), \frac\delta2 \right) \cap \mathcal{Q}.}\end{equation}
Thus, using the locality property of Theorem \ref{GromovInversion},
\begin{equation}\mathscr{D}_{(u^\ast_{|\mathcal{Q}}, v^\ast_{|\mathcal{Q}}, 0)}^{-1}\left(-h_{1|\mathcal{Q}}, -h_{2|\mathcal{Q}}\right)(t,x)=\mathscr{D}_{(u^\ast_{|\mathcal{Q}}, v^\ast_{|\mathcal{Q}}, 0)}^{-1}\left(0,0\right)(t,x),\end{equation}
that is, using the normalisation property:
\begin{equation}\label{Matching}
(u,v,h)(t,x)=(u^\ast,v^\ast, 0)(t,x).
\end{equation}

We can now extend $(u,v,h)$ by setting
\begin{equation}\ (u,v,h)(t,x)=(u^\ast,v^\ast, 0)(t,x), \ \forall (t,x) \in [0,T]\times [0,L] \setminus \mathcal{Q}.\end{equation}
Then, 
$$\textrm{supp} \ h \subset [0,T] \times [a, b],$$
and $(u,v)$ satisfies the same initial, boundary and final conditions as $(u^\ast,v^\ast)$:
 \begin{equation}
 \left\{
 \begin{aligned}
 (u,v)(0, \ \cdot \ ) &=(u_0, v_0), &
 (u_t,v_t)(0, \ \dot \ ) &= (u_1, v_1) \\
  (u,v)(T, \ \cdot \ ) &=(u_0^f, v_0^f),&
 (u_t,v_t)(T, \ \dot \ ) &= (u_1^f, v_1^f)
 \end{aligned}\right.\\
 \end{equation}
 \begin{equation}
 \left\{\begin{aligned}
 u(\ \cdot \ , 0) &= u( \ \cdot \ , L) =0 \\
 v(\ \cdot \ , 0) &= v( \ \cdot \ , L) =0 
 \end{aligned}\right.
 \end{equation}
 and  
\begin{equation}
\left\{ 
	\begin{aligned}
    \wave_{\nu_1} u &= f_1(u,v)+h, \\
    \wave_{\nu_2} v &= f_2(u,v),
    \end{aligned}
\right.
\end{equation}

Finally, we get \eqref{NonDegenerateEstimate} from \eqref{PreimageSmallness} and the continuity property of Theorem \ref{GromovInversion}.

\noindent This proves Theorem \ref{NonDegenerateResult}.

\begin{Rq}
Theorem \ref{NonDegenerateResult} actually holds for coupled quasilinear equations:
\begin{equation}\label{SystemQuasiLin}\left\{\begin{aligned}
\partial_{tt} u - \partial_x \left(K_1(u, \partial_x u)\right) & = f_1(u,v)+ h, \ x \in [0,L], \\
\partial_{tt} v - \partial_x \left(K_2(v, \partial_x v)\right)  & = f_2(u,v), \ x \in [0,L], \\
u & = 0 \ \textrm{on} \ \{0,L\}, \\
v & = 0 \ \textrm{on} \ \{0,L\},
\end{aligned} \right.\end{equation}
where $f_1(0,0)=f_2(0,0)=0$, $K_1, K_2 \in \ce^\infty(\RR^2)$, and $K_1(0,0)=K_2(0,0)=0$. One can check that when one modifies the recurrence relation in \eqref{CompatibilityOperators} to match the new equations, the operators can still be written using only $J_x^n(u,v)$, $J_x^{n-1}(u_t,v_t)$ and $J_t^n(u,v)$, and thus the compatibility conditions will have the same form as \eqref{CoupleCompatibility}.

Indeed, in this case we can still use Li's results for the perturbed quasilinear system, as we consider the ``perturbations'' around $0$. This will yield a ``universal'' time condition, because the propagation speeds are close to $\min (\sqrt{\partial_2 K_1(0,0)}, \sqrt{\partial_2 K_2(0,0)})$ for the perturbed system. On the other hand if we work around a nonzero trajectory (return method), the perturbed quasilinear system could present quite smaller propagation speeds. The final time condition would then depend on the return trajectories that are found.
\end{Rq}
\begin{Th}\label{QuasiLinResult}
Let $R>0$, $0\leq a < b \leq L$, $T>0$ such that
\begin{equation}\label{TimeCondThQuasiLin}
\begin{aligned}
T &> 2(L-b)\max \left(\left(\sqrt{\partial_2 K_1(0,0)}\right)^{-1}, \left(\sqrt{\partial_2 K_2(0,0)}\right)^{-1}\right), \\ T & >2a\max \left(\left(\sqrt{\partial_2 K_1(0,0)}\right)^{-1}, \left(\sqrt{\partial_2 K_2(0,0)}\right)^{-1}\right).\end{aligned}\end{equation}
If 
\begin{equation}\label{NonDegenerateCouplingQuasiLin}
\frac{\partial f_2}{\partial u} (0,0) \neq 0, \end{equation}
then there exists $\eta >0$ such that for initial and final conditions 
$$((u_0,u_1),  (v_0, v_1), (u_0^{f}, u_1^{f}),(v_0^f, v_1^f) )\in B_{\left(\ce^{11}([0,L]) \times \ce^{10}( [0,L])\right)^4}(0, \eta)$$
compatible at the order $11$,
there exists $h \in \ce^6 ([0,T]\times[0,L])$ such that
\begin{equation}\label{SupportsThQuasiLin}
\textup{supp} \ h \subset [0, T] \times [a, b],\end{equation}
and such that the corresponding solution $(u,v)\in \ce^6 ([0,T]\times[0,L])$ of \eqref{SystemQuasiLin} with initial values $((u_0,u_1), (v_0, v_1))$ satisfies
$$\left\{\begin{aligned}u(T,\ \cdot \ )&=u_0^{f},&
u_t(T,\ \cdot \ )&=u_1^{f},\\
v(T, \ \cdot \ )&=v_0^f,&
v_t(T, \ \cdot \ )&=v_1^f\end{aligned}\right.$$
and inequality \eqref{NonDegenerateEstimate} holds.

\end{Th}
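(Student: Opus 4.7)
The proof follows the three-step scheme used for Theorem \ref{NonDegenerateResult} essentially verbatim, with only the obvious modifications accommodating the quasilinear terms $\partial_x K_i(u,\partial_x u)$. Since $\partial_x K_i(u,\partial_x u)=\partial_1 K_i(u,\partial_x u)\,\partial_x u + \partial_2 K_i(u,\partial_x u)\,\partial_{xx}u$ is still second-order, the nonlinear operator
$$\mathscr{D}(u,v,h)=\bigl(\partial_{tt}u-\partial_x K_1(u,\partial_x u)-f_1(u,v)-h,\ \partial_{tt}v-\partial_x K_2(v,\partial_x v)-f_2(u,v)\bigr)$$
is a $\ce^\infty$ differential operator of order $r=2$. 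The compatibility operators \eqref{CompatibilityOperators} must be adjusted ($\partial_{xx}$ replaced by $\partial_x K_i(\cdot,\partial_x\cdot)$ at the second level, and similarly in the recurrence), but as noted in the remark preceding the theorem the resulting operators still depend only on $J_x^n(u,v)$, $J_x^{n-1}(u_t,v_t)$ and $J_t^n(u,v)$, so the compatibility conditions retain the form \eqref{CoupleCompatibility}.

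For the two-control step I would establish a quasilinear analogue of Proposition \ref{InternalTwoControls1}: for $(u_0,u_1,v_0,v_1)$ and their final counterparts in small $\ce^{11}\times\ce^{10}$ balls around zero, produce $h_1,h_2\in\ce^{9}$ supported in $[\delta,T-\delta]\times[a+\delta,b-\delta]$ that drive system \eqref{SystemQuasiLin} augmented with a second control $+h_2$ on the $v$-equation from the initial data to the final data, with the same smallness estimates \eqref{hSmallness}--\eqref{ySmallness}. This is where \eqref{TimeCondThQuasiLin} enters: Li's small-data boundary controllability results for quasilinear wave equations (\cite[chapter 5, sections 5.2--5.3]{Li}) apply with propagation speeds $\sqrt{\partial_2 K_i(0,0)}$, which is the correct linearisation at the origin. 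The usual cutoff-and-extension argument on an enlarged spatial domain then converts boundary controllability into compactly supported interior controllability.

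For the algebraic solvability step, define the open differential relation
$$\mathcal{A}=\Bigl\{(u,v,h)\in\ce^2(\mathcal Q)^3 : \partial_2 K_1(u,\partial_x u)>0,\ \partial_2 K_2(v,\partial_x v)>0,\ \tfrac{\partial f_2}{\partial u}(u,v)\neq 0 \text{ on }\mathcal Q\Bigr\},$$
which contains $(0,0,0)$ thanks to the hyperbolicity implicit in \eqref{TimeCondThQuasiLin} and to \eqref{NonDegenerateCouplingQuasiLin}. The key observation is that when computing the Fréchet differential $\mathscr{L}_{(u,v,h)}(\tilde u,\tilde v,\tilde h)$, the linearisation of $\partial_x K_2(v,\partial_x v)$ in $v$ vanishes identically once we impose $\tilde v=0$. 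Hence the infinitesimal inversion of Proposition \ref{InfinitesimalInversion} transfers almost word for word: set $\tilde v=0$, $\tilde u=-h_2/\partial_u f_2(u,v)$, and solve the first linearised equation for $\tilde h$ as a second-order expression in $\tilde u$, yielding an infinitesimal inversion of order $s=2$ over $\mathcal A$. Applying Theorem \ref{GromovInversion} with $d=r=s=2$, $\sigma_0=7$, $\nu=\delta/2$ and arguing exactly as in the second half of Section~2 (matching the Gromov-corrected trajectory with $(u^\ast,v^\ast)$ on $\mathring{\mathcal Q}\setminus\mathcal Q_\delta$ via locality and normalisation, then extending by $(u^\ast,v^\ast,0)$ outside $\mathcal Q$) then yields \eqref{SupportsThQuasiLin}, the prescribed final data, and \eqref{NonDegenerateEstimate}.

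The main obstacle is the adaptation of Li's boundary controllability to produce the interior two-control result with the regularity, support, and smallness estimates required to feed into Gromov's theorem; in particular one must choose the initial smallness $\eta$ small enough that the two-control trajectory $(u^\ast,v^\ast)$ remains inside $\mathcal A^{11}$, so that hyperbolicity of the quasilinear operator is preserved and $\partial_u f_2$ stays bounded away from zero along it. Once this is in place, all remaining steps are formally identical to the semilinear case.
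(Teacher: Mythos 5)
Your proposal is correct and follows exactly the route the paper indicates: the paper itself does not write out a proof of Theorem \ref{QuasiLinResult} but only gives the preceding remark, which amounts to precisely the three observations you elaborate — the compatibility operators retain the form \eqref{CoupleCompatibility} after replacing $\partial_{xx}$ by $\partial_x K_i(\cdot,\partial_x\cdot)$; Li's small-data quasilinear boundary controllability supplies the two-control step with propagation speeds $\sqrt{\partial_2 K_i(0,0)}$, which is where \eqref{TimeCondThQuasiLin} enters; and the infinitesimal inversion of Proposition \ref{InfinitesimalInversion} carries over because setting $\tilde v=0$ kills the linearisation of $\partial_x K_2(v,\partial_x v)$, so $\tilde u$ is still read off algebraically from the second linearised equation and $\tilde h$ from the first, keeping $d=r=s=2$ and $\sigma_0=7$. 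Your additional inclusion of the strict hyperbolicity conditions $\partial_2 K_i>0$ in the differential relation $\mathcal{A}$ is the right precaution to ensure well-posedness of the quasilinear Cauchy and boundary-control problems along the perturbed trajectory, and it is an open condition satisfied at the origin, so the rest of the Gromov argument (neighbourhood, locality, normalisation, extension by $(u^\ast,v^\ast,0)$ outside $\mathcal{Q}$) goes through unchanged.
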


\section{A degenerate case: cubic coupling}

\ 

We now turn to system \eqref{System}. Let us mention a few important specificities of this system. 
First, around the equilibrium $(0,0,0)$, the linearised system is obviously not controllable:
\begin{equation}\label{LinSystem}\left\{
\begin{aligned}
\wave_{\nu_1} u & = h, \\
\wave_{\nu_2} v & = 0, \\
u_{|\partial \Omega} & =  0, \\
v_{|\partial \Omega} & =  0,
\end{aligned} \right.
\end{equation}
the control $h$ gives us no influence on the dynamics of$v$.

This fact can also be described as a degenerescence: system \eqref{System} does not satisfy condition \eqref{NonDegenerateCoupling}, so the coupling can be seen as degenerate. Thus, the computations from the beginning of subsection \ref{Correction} do not hold: we cannot work around the stationary trajectory $0$, thus we need to find another trajectory around which to work. More precisely, keeping in mind Proposition \ref{InfinitesimalInversion}, we look for a return trajectory $(\bar{u}, \bar{v},\bar{h})$ going from $0$ to $0$ such that for some smooth closed set $\mathcal{Q} \subset [0,T]\times[a,b]$,we have
\begin{equation}\label{uSuitable}\forall (t,x) \in \mathcal{Q} ,\frac{\partial f_2}{\partial u}(\bar{u}(t,x), \bar{v}(t,x))=3\bar{u}^2 (t,x)\neq 0.\end{equation}
Additionnally, $\mathcal{Q}$ will have to satisfy some properties so that a result with two controls can be proved.

To find such a trajectory, we follow the same idea as in \cite{CGR}, where return trajectories are built for coupled heat equations with a cubic coupling. The additional derivative in time simply adds terms and makes for heavier computations. However, condition \eqref{uSuitable} will account for additional work.

We will then prove and use a more general controllability result with two controls. After that, the application of Gromov's theorem is rather straightforward.

\subsection{A preliminary construction: elementary trajectories}
\ 
 
In this subsection, we describe a construction of a smooth trajectory of system \eqref{System} that goes from $0$ to $0$. For now we consider condition \eqref{uSuitable} but without any special requirements for $\mathcal{Q}$.

In what follows, we suppose, without loss of generality (by scaling the space variable) that $\nu_2=1$.

To build trajectories that start at $0$ and return there, the idea is to use the cascade structure of the equation: first we find a $\ce^\infty([-1,1]\times [0,1])$ function $\bar{v}$ such that $\wave \bar{v}$ is the third power of a $\ce^\infty([-1,1]\times [0,1])$ function $\bar{u}$. By setting the right conditions at the start and end times, this gives us a return trajectory. The corresponding control will then be $\wave_{\nu_1} \bar{u}$.

Let us recall that $x\mapsto \sqrt[3]{x}$ is $\ce^\infty$ on $\RR^\ast$. So, by composition, the cubic root of a $\ce^\infty$ function $f$ is $\ce^\infty$ at all the points where $f$ is non-zero. At the points where $f$ vanishes, by Taylor's formula, a fairly simple sufficient condition for $\sqrt[3]{f}$ to be $\ce^\infty$ at those points is for $f$ to vanish, along with its first and second derivatives, while its third derivative is non-zero.

Now, to find functions whose image by the wave operator is a third power of a $\ce^\infty$ function, we consider the solutions to the corresponding stationary problem, namely functions whose Laplacian is the third power of a $\ce^\infty$ function. The solution of this problem corresponds to the following proposition, proven (with $1/2$ instead of $3/4$) in \cite{CGR}:
\begin{Prop}[Coron, Guerrero, Rosier] \label{Stationary1D}
There exist $\delta^\prime, \delta^{\prime\prime}, g\in \ce^\infty([0,1]), G \in \ce^\infty([0,1])$ such that
\begin{equation}\label{Stationary1DAppendix}
\left\{\begin{array}{l}
g'' = G,\\
g(z) = 1- z^2 \ \textup{on} \ [0, \delta^{\prime\prime}], \\
g(z) = e^{-\frac{1}{1-z^2}} \ \textup{on} \ [1-\delta^\prime, 1), \\
G(z)\left(z-\frac34\right)>0 \textup{ for } z\in (0,1) \setminus\left\{\frac34\right\},\\
G(z)=\left(z-\frac34\right)^3 \ \textup{on} \  \left[\frac34-\frac{\delta^{\prime\prime}}{2}, \frac34+\frac{\delta^{\prime\prime}}{2}\right],
\end{array}\right.\end{equation}

\end{Prop}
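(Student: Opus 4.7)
The idea is to build $G=g''$ globally on $[0,1]$ first, and then recover $g$ by double integration, with integration constants and remaining free parameters adjusted so that $g(z)=1-z^2$ on $[0,\delta'']$ holds automatically and $g(z)=e^{-1/(1-z^2)}$ on $[1-\delta',1)$ is achieved exactly (not merely up to an affine function).

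First I prescribe $G$ on three small subintervals where its form is dictated by the statement. On $[0,\delta'']$ set $G\equiv -2$, which integrates to $g(z)=1-z^2$ with initial values $g(0)=1$, $g'(0)=0$. On $[3/4-\delta''/2,\,3/4+\delta''/2]$ set $G(z)=(z-3/4)^3$. On $[1-\delta',1)$ set $G(z):=\partial_z^2(e^{-1/(1-z^2)})$: writing $u(z)=-1/(1-z^2)$, one computes $\partial_z^2(e^u)=\bigl(u''+(u')^2\bigr)e^u$ and checks that $u''+(u')^2$ has numerator tending to $4>0$ as $z\to1^-$, so after possibly shrinking $\delta'$ this is strictly positive on $[1-\delta',1)$. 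Together with $G(z)=-2<0$ on $[0,\delta'']$ and the sign of $(z-3/4)^3$, the sign condition $G(z)(z-3/4)>0$ holds on all three prescribed pieces.

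Next I extend $G$ smoothly across the two gaps $(\delta'',3/4-\delta''/2)$ and $(3/4+\delta''/2,\,1-\delta')$, keeping $G<0$ on the first gap and $G>0$ on the second, and introducing two real amplitudes $\lambda_1,\lambda_2$ by writing $G=G_i^0+\lambda_i\psi_i$ on the $i$-th gap, with $G_i^0$ a smooth baseline connector of the correct sign matching to infinite order at the endpoints and $\psi_i$ a fixed positive smooth bump compactly supported in the interior of the gap. Define
\[
g(z):=1-\delta''^{\,2}-2\delta''(z-\delta'')+\int_{\delta''}^z(z-s)\,G(s)\,ds,\qquad z\in[\delta'',1),
\]
which glues $\ce^\infty$-ly to $g(z)=1-z^2$ on $[0,\delta'']$. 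Because $g''=G=(e^{-1/(1-z^2)})''$ on $[1-\delta',1)$, the two functions differ there by an affine function; hence the target $g(z)=e^{-1/(1-z^2)}$ on $[1-\delta',1)$ reduces to the two scalar matching conditions at $z=1-\delta'$: equality of values and of first derivatives. Both conditions are affine in $(\lambda_1,\lambda_2)$, and choosing $\psi_1,\psi_2$ with linearly independent pairs of (zeroth, first) moments against $s\mapsto(1-\delta'-s)$ makes the $2\times 2$ system invertible and determines $(\lambda_1,\lambda_2)$ uniquely.

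The main obstacle is to ensure that this tuning does not destroy the strict sign $G(z)(z-3/4)>0$ on the two gaps. This is handled by exploiting the flatness of $e^{-1/(1-z^2)}$ at $z=1$: both the value and the first derivative of $e^{-1/(1-z^2)}$ at $z=1-\delta'$ vanish faster than any polynomial in $\delta'$, so the right-hand side of the $2\times 2$ system is arbitrarily small. Choosing first the baselines $G_i^0$ so that $|G_i^0|$ is bounded below on $\mathrm{supp}(\psi_i)$ and then $\delta'$ small enough makes $|\lambda_1|,|\lambda_2|$ so small that the perturbations $\lambda_i\psi_i$ cannot flip the sign of $G_i^0$. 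This produces $g\in\ce^\infty([0,1])$ and $G=g''\in\ce^\infty([0,1])$ satisfying all five conditions of \eqref{Stationary1DAppendix}, and the argument is essentially the one carried out in \cite{CGR} with $1/2$ in place of $3/4$, the translation of the inflection point being inessential.
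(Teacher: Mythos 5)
Your overall scaffold---prescribe $G=g''$ on the three mandated subintervals, extend across the two gaps with sign control, recover $g$ by double integration, and tune two free amplitudes $(\lambda_1,\lambda_2)$ to hit the value and first derivative of $e^{-1/(1-z^2)}$ at $z=1-\delta'$---is reasonable, and the verifications you do carry out (the sign of $G$ on each prescribed piece, the affine reduction of the matching condition) are correct. The gap is in the final step, where the real work of the proposition is concentrated. Writing $g_0$ for the function produced by the baselines alone (i.e.\ $\lambda_1=\lambda_2=0$), the right-hand side of your $2\times 2$ system is
\begin{equation*}
r_1 = e^{-1/(1-(1-\delta')^2)} - g_0(1-\delta'), \qquad
r_2 = \partial_z\left(e^{-1/(1-z^2)}\right)\Big|_{z=1-\delta'} - g_0'(1-\delta').
\end{equation*}
You argue this is arbitrarily small because the first summand in each entry vanishes faster than any power of $\delta'$. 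That is true but irrelevant: the second summands $g_0(1-\delta')$ and $g_0'(1-\delta')$ are dictated by the baselines and are generically of order one (one integrates twice, starting from $g_0(\delta'')\approx 1$, $g_0'(\delta'')\approx 0$, a profile $G_0$ that you explicitly bound away from zero on much of the interval). So shrinking $\delta'$ does not make $\lambda_1,\lambda_2$ small, and the perturbations $\lambda_i\psi_i$ can perfectly well overwhelm $G_i^0$ and destroy the strict sign $G(z)(z-3/4)>0$, which is precisely what the construction must protect.

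To close this gap you must first arrange the baseline so that $g_0(1-\delta')$ and $g_0'(1-\delta')$ are themselves close to zero, e.g.\ choose $G_1^0, G_2^0$ (within the sign constraints) so that the associated $g_0$ satisfies $g_0(1)=g_0'(1)=0$; then $g_0(1-\delta')=O({\delta'}^2)$ and $g_0'(1-\delta')=O(\delta')$ by Taylor expansion at $z=1$, and your smallness argument does kick in. But this is itself a nontrivial two--parameter shooting problem (hit $g_0(1)=g_0'(1)=0$ with $g_0''<0$ before $3/4$ and $g_0''>0$ after $3/4$, starting from the fixed data at $\delta''$) and requires an argument you have not supplied. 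A secondary, more minor issue: the order of choices ``first fix the baselines, then shrink $\delta'$'' is not coherent as stated, because the right endpoint of the second gap --- hence the definition of $G_2^0$ and the admissible support of $\psi_2$ --- depends on $\delta'$; one should instead fix $\psi_2$ supported strictly inside $(3/4+\delta''/2,\,1-\delta'_0)$ for some $\delta'_0>0$ and only shrink $\delta'\le\delta'_0$.
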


\ 

In a sense, this proposition gives us the simplest example of functions the second derivative of which is the third power of a smooth function: $G=g^{\prime\prime}$ vanishes exponentially in $1$, and has only one vanishing point on $[0, 1)$, around which it has a cubic behaviour.
 The idea of the construction is then to perturb this function of space and make it evolve in time, so slightly as to preserve the properties \ref{Stationary1DAppendix} of the stationary problem.
 Let $0\leq a < b \leq L$, and $T>0$ such that \eqref{TimeCondTh1} holds.

Let $0<\delta<\min(T/2, (b-a)/2)$ such that \eqref{lambda} holds. Set $\lambda_0$ to be a function such that
\begin{equation}\label{lambda}
\begin{array}{c}

\lambda_0(t)= e^{-\sqrt{\frac{1}{t(T-t)}}} \quad \forall t \in \left(0, \frac{\delta}{2}\right] \cup \left[T-\frac{\delta}{2}, T\right), \\
\lambda_0(0)=\lambda_0(T)=0, \\
\lambda_0(t)>0, \ \forall t \in (0,T), \\
\lambda_0([\delta, T-\delta])=\{1\},\end{array}\end{equation}
and write $\lambda:=\e \lambda_0$ for some $\e$ to be determined. 

\begin{Rq}\label{lambdaPol}
In \cite{CGR}, the authors take 
\begin{equation}\label{CGRLambda}\lambda(t)=\e t^2(1-t)^2.\end{equation}
In our case however, we will see that we need to fit a rectangle of the form $[\delta, T-\delta]\times[x_0-\xi, x_0+\xi]$ inside the support of $\bar{u}$, see Figure \ref{fig:Tuile}. With a polynomial as in \eqref{CGRLambda}, the smaller $\delta>0$ gets, the smaller $\xi$ has to be. This in itself would not be an obstruction to prove our controllability result, but using definition \eqref{lambda} has the advantage to fix the width of the rectangle for all $\delta$ satisfying \eqref{delta}.
\end{Rq}
\noindent Set
\begin{equation}\label{f_0}
\begin{array}{c}
f_0(t)=e^{- \frac{1}{t(t-T)}},\ \forall t \in (0,T),\\
f_0(0)=f_0(T)=0,
\end{array}
\end{equation}
Finally, let $g_0$ be the solution to the stationary problem (see Proposition \ref{Stationary1D}).
Let $x_0 \in (0,L)$, and choose $\e \leq \min (x_0, L-x_0)$. We now look for $\bar{v}$ in the form
\begin{equation}\label{LinCombi}\bar{v}(t,x)=\sum_{i=0}^3 f_i(t) g_i\left(\frac{|x-x_0|}{\lambda(t)}\right).\end{equation}
Note that the fact that $f_0$ vanishes faster than $\lambda$ at $0$ and $T$ compensates the singularity that occurs in the term $|x-x_0|/\lambda(t)$ of the first term of the sum. We will see that the $f_i$ have a similar property, thus ensuring that functions of the form above are indeed $\ce^{\infty}$.
We also require that the $g_i$ satisfy
\begin{equation}\label{supports}
\textrm{supp} \ g_i \subset \left[\frac{3}{4} -\frac{\delta^{\prime\prime}}{2}, \frac{3}{4} +\frac{\delta^{\prime\prime}}{2}\right], \ \forall i \in \{1,2,3\},\end{equation}
where $\delta^{\prime\prime}$ is as defined in Proposition \ref{Stationary1D}, so that
\begin{equation}
\textrm{supp} \ (\bar{u}, \bar{v}, \bar{h}) \subset [0,T] \times [x_0-\e,x_0+\e].
\end{equation}

 		\begin{figure}[h]
 		\centerline{\includegraphics[scale=0.8]{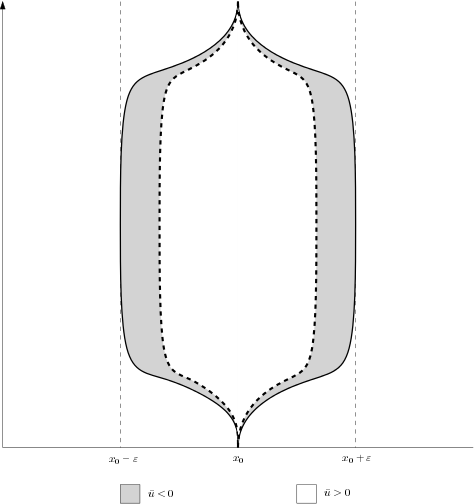}}
 		\caption{\label{fig:ElemSupport} \textit{The support of the trajectory $(\bar{u},\bar{v}, \bar{h})$. The dashed line represents the vanishing points of $\wave \bar{v}$ (or, equivalently, $\bar{u}$).}}
 		\end{figure}
Let us then set, in order to simplify the notations for our computations:
$$z:=\frac{|x-x_0|}{\lambda(t)},$$
$$V(t,x):=\wave \bar{v} = \bar{v}_{tt}-\bar{v}_{xx}, $$
which we note, in the new set of variables,
$$\mathcal{V}(t,z):=V(t, \lambda(t)z).$$

We are now looking for functions $f_i$ and $g_i$ such that $V^{\frac{1}{3}}$ is of class $\ce^\infty$. In order to achieve this, we will work with the new set of variables $(t,z)$, and study $\mathcal{V}$. We now need to have precise knowledge of the behaviour of $\mathcal{V}$ when it vanishes. 

\noindent More precisely, the aim is to write $\mathcal{V}$ near $\frac{3}{4} $ as:
$$\lambda^2 \mathcal{V}=\left(z-\frac{3}{4} \right)^3 \varphi(t,z), \ \textrm{with} \ \varphi \in \ce^\infty\left([0,T]\times \left[ \frac{3}{4} -\frac{\delta^{\prime\prime}}{2}, \frac{3}{4} +\frac{\delta^{\prime\prime}}{2}\right]\right), \ \varphi <0 \ \textrm{for}\ t\neq 0, T.$$
Note that $\varphi$ has to be negative because of the minus sign in the wave operator.
Hence, we look for $\mathcal{V}$ satisfying
$$\begin{array}{c}
\displaystyle{\mathcal{V}_z\left( \ \cdot \ , \frac{3}{4}  \right)=0}, \\
\displaystyle{\mathcal{V}_{zz}\left( \ \cdot \ , \frac{3}{4}  \right)=0},\\
\displaystyle{\mathcal{V}_{zzz} \leq -C f_0 } \ \textrm{on} \ [0,T]\times\left[\frac{3}{4}  -\frac{\delta^{\prime\prime}}{2}, \frac{3}{4}  +\frac{\delta^{\prime\prime}}{2}\right], \ \textrm{where} \ C>0.
\end{array}$$
Additionally, since we have the following condition on $G$:
$$G(z)(z-\frac{3}{4} )>0 \textrm{ for } z\in (0,1) \setminus\{\frac{3}{4} \},$$
we will make sure to have 
$$\mathcal{V}(t,z)\left(z-\frac{3}{4} \right) < 0 , \ \forall (t,z) \in (0,T)\times \left((0,1) \setminus \left\{\frac{3}{4} \right\} \right) .$$

\

Let us now compute $\mathcal{V}$ and its first, second and third derivatives:
$$\begin{array}{rcl}
\bar{v}_{tt} &=& \displaystyle{\sum_{i=0}^{3} \ddot{f_i}g_i - 2 \dot{f_i} z \frac{\dot{\lambda}}{\lambda} g_i^\prime - f_i\left(z \frac{\lambda \ddot		{\lambda}-2\dot{\lambda}^2}{\lambda^2} g_i^{\prime}  -z^2\left(\frac{\dot		{\lambda}}{\lambda}\right)^2 g_i^{\prime\prime}\right)} ,\\
\bar{v}_{xx}& = & \lambda^{-2} f_0 G + \displaystyle{\sum_{i=1}^{3} f_i \lambda^{-2} g_i^{\prime\prime}.}
\end{array}$$

$$\begin{array}{rcl}
\lambda^2 \mathcal{V}&=&\displaystyle{-(1-z^2\dot{\lambda}^2)f_0 G + \lambda^2 \ddot{f_0} g_0 - 2 \dot{f_0} z \dot{\lambda}\lambda g_0^\prime-z \left(\lambda \ddot{\lambda}-2\dot{\lambda}^2\right) f_0 g_0^{\prime}  + \sum_{i=1}^{3} \left(\lambda^2 \ddot{f_i}g_i - 2 \dot{f_i} z \dot{\lambda}\lambda g_i^\prime\right. }\\
& &\left. - f_i\left(z \left(\lambda \ddot		{\lambda}-2\dot{\lambda}^2 \right)g_i^{\prime}-z^2\dot		{\lambda}^2 g_i^{\prime\prime}\right)- f_i  g_i^{\prime\prime} \right)\vspace{1em}\\
&=& \displaystyle{-(1-z^2\dot{\lambda}^2)f_0 G + \lambda^2\ddot{f_0} g_0 - 2 \dot{f_0} z \dot{\lambda}\lambda g_0^\prime-z (\lambda \ddot{\lambda}-2\dot{\lambda}^2) f_0 g_0^{\prime}   + \sum_{i=1}^{3} \left( \lambda^2 \ddot{f_i}g_i - 2 \dot{f_i} z \dot{\lambda}\lambda g_i^\prime \right.}\\
& &\displaystyle{ \left.- f_i\left[z (\lambda \ddot		{\lambda}-2\dot{\lambda}^2) g_i^{\prime}  + \left(1-z^2\dot	{\lambda}^2 \right) g_i^{\prime\prime}\right]\right).}
\end{array}$$
Now, for $\e$ small enough (note that this depends on the value of $\delta$), 
\begin{equation}\label{NonZero} 1-(\e z \dot{\lambda_0}(t))^2 > \frac{1}{2}, \quad \forall (t,z) \in [0,T]\times [0,1]\end{equation}
and, using the notation $\lambda$,
\begin{equation}\label{NonZero1}\left\|\frac{1}{1-\left(\frac{3}{4} \dot{\lambda}\right)^2} \right\|_{\ce^2([0,T])}  \leq 10. \end{equation}
Now, if we impose
\begin{equation}\label{1stOrder}\left\{ \begin{array}{l}
\displaystyle{g_i^{(j)}\left(\frac{3}{4} \right)=0 }\ \forall i\in \{1,2,3\} , j \in \{0,1,2\} \ (i,j) \neq (1,2) \\
\displaystyle{g_1^{(2)} \left(\frac{3}{4} \right)=1}
 \end{array}
\right.\end{equation}
and if we define $f_1$ by
\begin{equation}\label{f_1}
f_1:=\frac{1}{1-(\frac{3}{4} \dot{\lambda})^2}\left( \lambda^2 g_0\left(\frac{3}{4} \right)\ddot{f_0} -  2\frac{3}{4} \dot{\lambda}\lambda g_0^\prime\left(\frac{3}{4} \right)\dot{f_0}-\frac{3}{4} (\lambda \ddot{\lambda}-2\dot{\lambda}^2) g_0^{\prime}(\frac{3}{4} ) f_0 \right),
\end{equation}
we get:
$$\lambda^2 \mathcal{V} \left(\ \cdot \ , \frac{3}{4}\right)=0.$$

We now compute the first derivative of $\mathcal{V}$:
$$\begin{array}{rcl}
\lambda^2 \mathcal{V}_z
&=& -(1-z^2\dot{\lambda}^2)f_0 G^\prime+(2z\dot{\lambda}^2-z (\lambda \ddot{\lambda}-2\dot{\lambda}^2))f_0 G -2z \dot{\lambda }\lambda \dot{f_0}G+ \lambda^2\ddot{f_0} g_0^\prime - 2 \dot{f_0} \dot{\lambda}\lambda g_0^\prime\\
& & -(\lambda \ddot{\lambda}-2\dot{\lambda}^2) f_0 g_0^{\prime}  + \sum_{i=1}^{3} \lambda^2 \ddot{f_i}g_i^\prime - 2 \dot{f_i} z \dot{\lambda}\lambda g_i^{\prime\prime}-2 \dot{f_i}  \dot{\lambda}\lambda g_i^{\prime} - f_i\left[ (\lambda \ddot		{\lambda}-2\dot{\lambda}^2) g_i^{\prime}  -2z\dot	{\lambda}^2  g_i^{\prime\prime}\right]  \\
& & - f_i\left[z (\lambda \ddot		{\lambda}-2\dot{\lambda}^2) g_i^{\prime\prime}  + \left(1-z^2\dot	{\lambda}^2 \right) g_i^{(3)}\right]  \vspace{1em}\\
&=& -(1-z^2\dot{\lambda}^2)f_0 G^\prime+(4z\dot{\lambda}^2+z \lambda \ddot{\lambda})f_0 G -2z \dot{\lambda }\lambda \dot{f_0}G+ \lambda^2\ddot{f_0} g_0^\prime - 2 \dot{f_0} \dot{\lambda}\lambda g_0^\prime-(\lambda \ddot{\lambda}-2\dot{\lambda}^2) f_0 g_0^{\prime} \\
& & +\sum_{i=1}^{3} \lambda^2 \ddot{f_i}g_i^\prime - 2 \dot{f_i} \dot{\lambda}\lambda(z  g_i^{\prime\prime}+  g_i^{\prime}) - f_i\left[ (\lambda \ddot		{\lambda}-2\dot{\lambda}^2) g_i^{\prime} +z (\lambda \ddot{\lambda}-4\dot{\lambda}^2) g_i^{\prime\prime}  + \left(1-z^2\dot	{\lambda}^2 \right) g_i^{(3)}\right].
\end{array}$$
Again, we impose
\begin{equation}\label{2ndOrder}\displaystyle{g_i^{(3)}\left(\frac{3}{4} \right)}=\left\{ \begin{array}{l}
0 \ \textrm{if $i\in \{1,3\}$}, \\
1 \ \textrm{if $i=2$},
\end{array}
\right.\end{equation}
and we set
\begin{equation}\label{f_2}
\begin{array}{rcl}
f_2 &:=& \frac{1}{1-(\frac{3}{4} \dot{\lambda})^2}\left[\lambda^2 g_0^\prime\left(\frac{3}{4} \right)\ddot{f_0} -   \dot{\lambda}\lambda g_0^\prime\left(\frac{3}{4} \right)\dot{f_0}- 2  \dot{\lambda}\lambda g_0\left(\frac{3}{4} \right)\dot{f_0}\right.\\
& & \left. -(\lambda \ddot{\lambda}-2\dot{\lambda}^2)  g_0^{\prime} \left(\frac{3}{4} \right) f_0-  2\frac{3}{4} \dot{\lambda}\lambda \dot{f_1}    - \frac{3}{4}  (\lambda \ddot{\lambda}-4\dot{\lambda}^2)f_1 \right]  
\end{array}
\end{equation}
so that 
$$\lambda^2 \mathcal{V}_z\left( \ \cdot \ , \frac{3}{4}  \right) =0.$$ 

Finally,
$$\begin{array}{rcl}
\lambda^2 \mathcal{V}_{zz}
&=&  -(1-z^2\dot{\lambda}^2)f_0 G^{\prime\prime}+(6z\dot{\lambda}^2+z \lambda \ddot{\lambda})f_0 G^\prime -2z \dot{\lambda }\lambda \dot{f_0}G^\prime+ 6\dot{\lambda}^2 f_0 G -4\dot{\lambda }\lambda \dot{f_0}G +\lambda^2\ddot{f_0} G \\
& &+\displaystyle{\sum_{i=1}^{3} \lambda^2 \ddot{f_i}g_i^{\prime\prime} - 2 \dot{f_i} \dot{\lambda}\lambda(2  g_i^{\prime\prime}+ z g_i^{(3)}) - f_i\left[ \left( 2\lambda \ddot{\lambda}-6\dot{\lambda}^2\right) g_i^{\prime\prime}+z (\lambda \ddot{\lambda}-6\dot{\lambda}^2) g_i^{(3)} + \left(1-z^2\dot	{\lambda}^2 \right) g_i^{(4)}\right].} 
\end{array}$$
Again we impose
\begin{equation}\label{3rdOrder}\displaystyle{g_i^{(4)}\left(\frac{3}{4} \right)}=\left\{ \begin{array}{l}
0 \ \textrm{if $i\in \{1,2\}$}, \\
1 \ \textrm{if $i=3$},
\end{array}
\right.\end{equation}
then, by setting:
\begin{equation}\label{f_3}
f_3 = \frac{1}{1-(\frac{3}{4} \dot{\lambda})^2}\left[  -(2\lambda\dot{\lambda} + 2 \dot{\lambda}^2)f_1-4\lambda \dot{\lambda} \dot{f_1} + \lambda^2 \ddot{f_1} -\frac{3}{4} (\lambda \ddot{\lambda}-6\dot{\lambda}^2) f_2  - 2 \frac{3}{4} \dot{\lambda}\lambda \dot{f_2} \right]
\end{equation}
we get:
$$\lambda^2 \mathcal{V}_{zz}\left(\ \cdot \ , \frac{3}{4} \right)=0.$$

Now all that remains is to estimate the third derivative: on $[0,T]\times \left[ \frac{3}{4}-\frac{\delta^{\prime\prime}}{2}, \frac{3}{4} +\frac{\delta^{\prime\prime}}{2}\right]$, by definition of $G$, we have
\begin{equation}\label{VThird}
\lambda^2 \mathcal{V}_{zzz}=-6K_\frac{3}{4}(1-z^2\dot{\lambda}^2)f_0 +\mathcal{R}_0+\mathcal{R},\end{equation}
with:
\begin{equation}\label{R_0}
\mathcal{R}_0   :=   z(8\dot{\lambda}^2 +\ddot{\lambda}\lambda)f_0 G^{\prime\prime} - 2z\dot{\lambda}\lambda \dot{f_0} G^{\prime\prime} + (12\dot{\lambda}^2+ \lambda \ddot{\lambda}) f_0 G^\prime - 6 \dot{\lambda}\lambda \dot{f_0} G^\prime +\lambda^2 \ddot{f_0} G^\prime,
\end{equation}
and
\begin{equation}\label{R}
\mathcal{R}:=\sum_{i=1}^3 \lambda^2 \ddot{f_i} g_i^{(3)} - 2\dot{f_i}\dot{\lambda}\lambda(3g_i^{(3)}+z g_i^{(4)})- f_i\left[(3\ddot{\lambda}\lambda -12 \dot{\lambda}^2)g_i^{(3)} +( z\lambda\ddot{\lambda}-8z\dot{\lambda}^2) g_i^{(4)} + (1-z^2 \dot{\lambda}^2)g_i^{(5)}\right].\end{equation}
Let us note that \eqref{f_0}, combined with the properties of exponential functions, yields
\begin{equation}\label{f0derivatives}
\left(\frac{d}{dt}\right)^n f_0 =  F_n(t) f_0(t), \ \forall n\in \NN,
\end{equation}
where the $F_n$ are rational fractions, the poles of which are $0$ and $T$. 
Now, one can see in \eqref{R_0}, \eqref{f_1}, \eqref{f_2} and \eqref{f_3} that the divergent behaviour of these fractions near $0$ and $T$ is always compensated by the exponential behaviour of $\lambda$ and its derivatives. Furthermore, differentiating the $f_i$ does not change this fact. Hence, keeping \eqref{NonZero1} in mind: 
\begin{equation}\label{f_i_ineq}
\begin{aligned}
\mathcal{R}_0&=\e^2\mathcal{O}(f;t,z), \\
f_1^{(n)}&=\e^2\mathcal{O}(f_0;t), \quad \forall n \in \NN, \\
f_2^{(n)}&=\e^2 \mathcal{O}(f_0;t), \quad \forall n \in \NN,\\
f_3^{(n)}&=\e^4 \mathcal{O}(f_0;t), \quad \forall n \in \NN,
\end{aligned}
\end{equation}
where the notation $\mathcal{O}(f;t)$ (resp. $\mathcal{O}(f;t,z)$) means $f$ times a bounded function of time on $[0,T]$ (resp. time and space).
Hence, near $\frac{3}{4} A$, we have
\begin{equation}\label{g3}\mathcal{R}_0+\mathcal{R}= \e^2 \mathcal{O}(f_0;t,z),\end{equation}
the dominant term being $\ddot{f_1}(1-z^2 \dot{\lambda}^2)g_i^{(5)}$.
Consequently, using \eqref{VThird} and \eqref{g3}, for a small enough $\e$, there exists a constant $C>0$ such that:
\begin{equation}\label{g4}\lambda^2 \mathcal{V}_{zzz} \leq - C f_0 \ \textrm{on} \ \left[\frac{3}{4} -\frac{\delta^{\prime\prime}}{2},\frac{3}{4} +\frac{\delta^{\prime\prime}}{2}\right].\end{equation}
Thus, on $[0,T]\times\left[\frac{3}{4}-\frac{\delta^{\prime\prime}}{2}, \frac{3}{4}+\frac{\delta^{\prime\prime}}{2}\right]$, we can write, thanks to the Taylor-Laplace formula:
$$\lambda^2 \mathcal{V}=\left(z-\frac{3}{4}\right)^3 \varphi(t,z), \ \textrm{with} \ \varphi \in \ce^\infty\left([0,T]\times\left[\frac{3}{4}-\frac{\delta^{\prime\prime}}{2}, \frac{3}{4}+\frac{\delta^{\prime\prime}}{2}\right]\right), \ \varphi <0 \ \textrm{for}\ t\neq 0, T.$$
Additionally, by definition of $f_0$,  $\varphi / \lambda^2$ vanishes exponentially for $t=0,T$, and \eqref{supports} ensures that $\varphi$ vanishes exponentially for $z=1$, so that
$$\left(\frac{\varphi}{\lambda^2}\right)^{\frac{1}{3}} \in \ce^\infty\left([0,T]\times\left[\frac{3}{4}-\frac{\delta^{\prime\prime}}{2}, \frac{3}{4}+\frac{\delta^{\prime\prime}}{2}\right]\right).$$
We now have
\begin{equation}\label{RegMiddle}
\mathcal{V}^{\frac{1}{3}}\in \ce^\infty\left([0,T]\times\left[\frac{3}{4}-\frac{\delta^{\prime\prime}}{2}, \frac{3}{4}+\frac{\delta^{\prime\prime}}{2}\right]\right)\end{equation}

Moreover, on $[0,T] \times \left(\left[0,\frac{3}{4}-\frac{\delta^{\prime\prime}}{2}\right) \cup \left(\frac{3}{4}-\frac{\delta^{\prime\prime}}{2}, 1\right]\right)$, thanks to the constraint on the supports of the $g_i$, we have:
\begin{equation}\label{g0}\lambda^2 \mathcal{V} = \displaystyle{-(1-z^2\dot{\lambda}^2)f_0G + \underbrace{\lambda^2 \ddot{f_0} g_0 - 2 \dot{f_0} z \dot{\lambda}\lambda g_0^\prime-z \left(\lambda \ddot{\lambda}-2\dot{\lambda}^2\right) f_0 g_0^{\prime}      }_{\e^2 \mathcal{O}(f_0;t,x)}.}\end{equation}
As, thanks to Proposition \ref{Stationary1D}, we have 
$$|G|> 2 \ \textrm{on} \ \left[0, \frac{3}{4}-\frac{\delta^{\prime\prime}}{2}\right] \cup \left[\frac{3}{4}+\frac{\delta^{\prime\prime}}{2}, 1-\delta^\prime\right],$$
for small enough $\e$, we have:
$$|\lambda^2 \mathcal{V}| > 0 \ \textrm{on} \ ]0,T[\times\left(\left[0, \frac{3}{4}-\frac{\delta^{\prime\prime}}{2}\right] \cup \left[\frac{3}{4}+\frac{\delta^{\prime\prime}}{2}, 1-\delta^\prime\right]\right).$$
Now, let us recall that, on $[1-\delta^\prime,1)$,
$$\begin{aligned}
g_0(z) &= \displaystyle{e^{-\frac{1}{1-z^2}}},\\
g_0^\prime(z) &= \displaystyle{\frac{-2z}{(1-z^2)^2}e^{-\frac{1}{1-z^2}}},\\
G(z)& = g_0^{\prime\prime}(z) = \displaystyle{\frac{6z^4-2}{(1-z^2)^4}e^{-\frac{1}{1-z^2}}},\\
\end{aligned}$$
So that $g_0/G \ \textrm{and} \ g_0^{\prime}/G$ are bounded near $1$, allowing us to write
\begin{equation}\label{G}\lambda^2 \mathcal{V} = \displaystyle{-f_0 G + \e^2\mathcal{O}(f_0;t)\mathcal{O}_{1^-}(G;z)}.\end{equation}
The notation $\mathcal{O}_{1^{-}}(G;z)$ meaning $G$ times a bounded function of space on $[1-\delta^\prime, 1]$.
So for small enough $\e$, there exists a function $a$ with positive values on $]0,T[$, such that
$$\lambda^2 \mathcal{V}(t,z) \leq -a(t)G(z) < 0, \ \forall (t,z) \in (-0,T)\times [1-\delta^\prime,1).$$
Finally, for all $t\in [0,T]$, $ \mathcal{V} (t, \ \cdot \ )$ vanishes exponentially at $z=1$, and for all $z\in [1-\delta^\prime,1]$, 
$\mathcal{V} ( \ \cdot \ , z)$ vanishes exponentially for $t=0,T$. Hence,
\begin{equation}\label{RegSides}\mathcal{V}^{\frac{1}{3}} \in \ce^\infty\left([0,T]\times\left(\left[0,\frac{3}{4}-\frac{\delta^{\prime\prime}}{2}\right) \cup \left(\frac{3}{4}+\frac{\delta^{\prime\prime}}{2}, 1\right]\right)\right).\end{equation}
This, together with \eqref{RegMiddle}, proves that 
\begin{equation}\label{Regz}\mathcal{V}^{\frac{1}{3}} \in \ce^\infty\left([0,T]\times[0,1]\right).\end{equation}
Now, as $x \mapsto |x|$ is $\ce^\infty$ on $\RR \setminus \{0\}$, by composition we deduce from \eqref{Regz} that 
$$V^{\frac{1}{3}}\in\displaystyle{\ce^\infty\left([0,T] \times \left((0,L)\setminus\{x_0\}\right)\right)}.$$

To deal with the missing point $x_0$, let us recall that for all $t\in]-1,1[$, for all $x\in [0,L]$ such that $|x-x_0|\leq \delta^{\prime\prime} \lambda(t)$ (i.e. $z\leq \delta^{\prime\prime}$),
\begin{equation}\label{SquareNorm}\begin{array}{rcl}
\lambda^2 \mathcal{V}(t,z)&=&-f_0 G + \lambda^2 \ddot{f_0} g_0 - 2 \dot{f_0} z \dot{\lambda}\lambda g_0^\prime-z \left(\lambda \ddot{\lambda}-2\dot{\lambda}^2\right) f_0 g_0^{\prime}  +z^2\dot{\lambda}^2 f_0 G \\
&=& 2 f_0 + \ddot{f_0} (\lambda^2 - |x-x_0|^2) +4  \dot{f_0}  \frac{\dot{\lambda}}{\lambda} |x-x_0|^2+2 \left(\frac{\ddot{\lambda}}{\lambda }-2\left(\frac{\dot{\lambda}}{\lambda}\right)^2\right) f_0 |x-x_0|^2  -2 \left(\frac{\dot{\lambda}}{\lambda}\right)^2 f_0 |x-x_0|^2\\
&=& 2  f_0+\lambda^2 \ddot{f_0} + \psi(t)|x-x_0|^2,
\end{array}\end{equation}
where $\psi\in\ce^\infty([0,T])$, and $\psi$ vanishes exponentially for $t=0,T$, along with all its derivatives.

\noindent We now see that the terms in $|x-x_0|$ of $V$ are actually in $|x-x_0|^2$, which compensates the singularity at $0$ of the map $x \mapsto |x|$. Thus, from the smoothness of $\mathcal{V}^{\frac{1}{3}}$ we get, by composition, $V^{\frac{1}{3}} \in \ce^\infty\left([0,T] \times [0,L] \right)$. 
Thus we have proved that, by chosing $g_i$ that verify \eqref{supports}, \eqref{1stOrder}, \eqref{2ndOrder} and \eqref{3rdOrder}, we get
$$V^{\frac{1}{3}} \in \ce^\infty\left([0,T] \times [0,L] \right).$$

Finally, we set 
$$\begin{aligned}
\bar{v}(x,t) &:=\sum_{i=0}^3 f_i(t) g_i\left(\frac{|x-x_0|}{\lambda(t)}\right), \\
\bar{u} &:=(\wave \bar v)^{\frac{1}{3}}, \\
\bar{h} &:=\wave \bar{u}.
\end{aligned}$$
where $\lambda$ is defined by \eqref{lambda}, the $g_i$ are some functions satisfying \eqref{supports}, \eqref{1stOrder}, \eqref{2ndOrder}, and \eqref{3rdOrder}, and the $f_i$ are defined by \eqref{f_0}, \eqref{f_1}, \eqref{f_2}, and \eqref{f_3}. 

Let us check that we have indeed built a return trajectory: for $i\in\{0, \cdots , 3\}$, the $f_i$ vanish at $-1$ and $1$, along with all their derivatives. Hence, 
$$\bar{u}(-1, \ \cdot \ )=\bar{v}(-1, \ \cdot \ )=\bar{u}_t(-1, \ \cdot \ )=\bar{v}_t(-1, \ \cdot \ )=0,$$
$$\bar{u}(1, \ \cdot \ )=\bar{v}(1, \ \cdot \ )=\bar{u}_t(1, \ \cdot \ )=\bar{v}_t(1, \ \cdot \ )=0.$$\qed

\begin{Rq}\label{VanishingPoints}
Most of the work in the construction above comes from the vanishing points $\displaystyle{\left(t, (3/4) \lambda(t)\right)}$ ``in the middle of the domain''. So one could wonder, would it not be simpler to try and build a function that only vanishes, along with all its derivatives, at the points $\left(t, \lambda(t) \right)$? 

Let us remind that our strategy to build the return trajectory is to start from a solution to the stationary problem, and then make it evolve through time so as to stay ``not too far away from it''. But the reason we have vanishing points ``in the middle of the domain'' has to do with that same stationary problem. More precisely, the stationary problem consists in finding functions that vanish, along with their derivatives, on the boundary of the domain. In our case this condition corresponds to
\begin{equation}
\label{SmoothVanish}
g(z) = e^{-\frac{1}{1-z^2}} \ \textrm{on} \ [1-\delta^\prime, 1].\end{equation}

We further require that the Laplacians of these functions be third powers of  $\ce^\infty$ functions. In our case this condition becomes
$$\begin{array}{c}
\displaystyle{G(z)\left(z-\frac{3}{4} \right)>0}, \\
\displaystyle{G(z)=\left(z-\frac{3}{4}\right)^3} \ \textrm{on} \  \left[\frac{3}{4} -\frac{\delta^{\prime\prime}}{2}, \frac{3}{4} + \frac{\delta^{\prime\prime}}{2}\right].
\end{array}$$

Now, we could instead demand that $G$ be non-negative (or non-positive). But then, by convexity arguments (or Hopf's maximum principle), we would get 
$$g^\prime (1)<0,$$

Which contradicts condition \eqref{SmoothVanish}. But that condition is very helpful in
proving the smoothness of $\mathcal{V}^{\frac{1}{3}}$ near the boundary. Giving it
up would mean setting more conditions on the $g_i$ functions near the boundary, so
we would have to give up condition \eqref{supports}, and then set additional
conditions on the $g_i$ to make sure $V$ is well defined (as $\lambda(0)=\lambda(T)=0$),
preserve the sign of $\mathcal{V}$ or more generally its smoothness, in particular
near the boundary...Which would probably be more trouble than what we had to do at
the vanishing points $\displaystyle{\left(t, (3/4) \lambda(t)\right)}$.
\end{Rq}

\subsection{Covering sets and return trajectories}\label{finalreturn}

\

As mentioned at the beginning of this section, we want to work on a smooth subset of $[0,T]\times [a,b]$ where $u \neq 0$. However, to do so we need more than the elementary trajectory described above: rather, we use the elementary trajectory as a building block for our final return trajectory. Indeed, let $0<\delta<\min\left((b-a)/4,T/2\right) $ such that \eqref{delta} is satisfied. The preliminary construction gives us a real number $\e >0$ (after the right rescaling of the space variable) and, for any  $x_0 \in [a+\delta+\e,b-\delta-\e]$, a trajectory $(\bar{u}, \bar{v}, \bar{h})$ such that $\bar{u} \neq 0$ on $\Lambda_{\e, x_0}:=\{(t,x) \ | \ |x-x_0| < (3/4) \e \lambda_0(t)\}$, which contains any rectangle of the form $[\delta, T-\delta] \times [x_0 - \xi, x_0+\xi]$ with $\xi< (3/4) \e$. Moreover, each of these rectangles can be fit into the interior of a smooth closed subset of $\Lambda_{\e, x_0}$. 
\begin{figure}[h!]
\centerline{\includegraphics[scale=0.5]{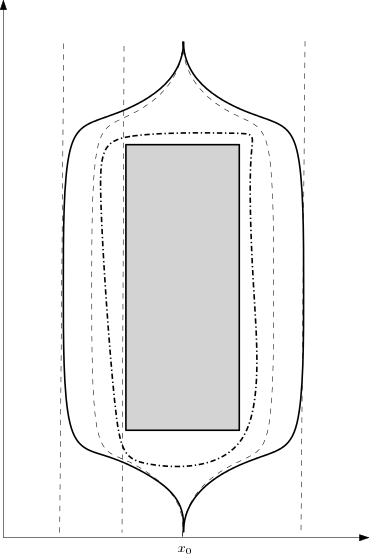}}
		\caption{\label{fig:Tuile} \textit{The support of the preliminary construction with a rectangle fit inside the line of vanishing points of $\bar{u}$.}}
\end{figure}

 Now there are cases (if $[a,b]$ is too long and $\e$ - and consequently, $\xi$ - too small), where none of the rectangles $[\delta, T-\delta] \times [x_0 - \xi, x_0+\xi]$ satisfies the Geometric Control Condition (GCC). Thus we cannot apply Proposition \ref{InternalTwoControls1} with controls supported in some $[\delta, T-\delta] \times [x_0 - \xi, x_0+\xi]$, as time condition \eqref{TimeCond1} does not hold in these cases. So we need to build a return trajectory $(\bar{u}, \bar{v} , \bar{h})$ such that $\bar{u} \neq 0$ on a smooth closed set $\mathcal{Q}$ containing a set $\mathcal{Q}_\delta$ that satisfies the GCC.
 
 Now there is a simple type of set that would fit our needs for $\mathcal{Q}_\delta$: in Section 2 we worked in $[\delta, T-\delta]\times [a+\delta, b+\delta]$, but we do not need the whole rectangle in general for the GCC to be satisfied. We can in fact work with a number of much smaller rectangles, as long as they are close enough to each other:
 \begin{Def}\label{CoveringSets}
 Let $0<\delta<\min\left((b-a)/4,T/2\right) $, such that 
 \eqref{delta} is satisfied. 
 A \textbf{$\delta$-covering set} of $[0,T] \times [a,b]$ for system \eqref{SystemGeneral} is a union of rectangles of the form $\left\{[\delta, T-\delta] \times [a_i,b_i], 1\leq i\leq N \right\}$ for some $N \geq 1$, such that
 \begin{equation}
 \begin{gathered}
 \begin{aligned}
 a_1 & =a+\delta, \\
 b_N & =b-\delta,
 \end{aligned}\\
 0<(a_{i+1}-b_i) \max \left(\frac1\nu_1, \frac1\nu_2\right) <T-2\delta, \quad 1 \leq i \leq N-1.
 \end{gathered}
 \end{equation}
 \end{Def}
 	
 		\begin{figure}[h]
 		\centerline{\includegraphics[scale=0.75]{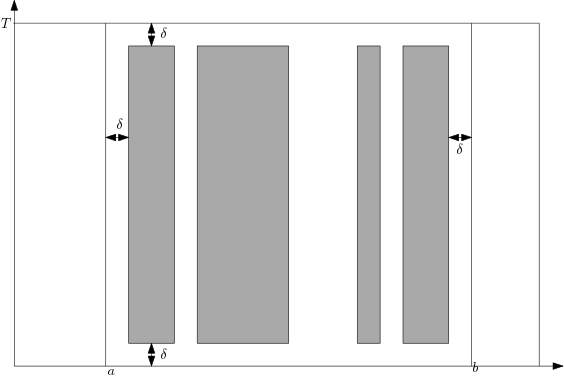}}
 		\caption{\label{fig:DeltaCoveringSet} An example of a $\delta$-covering set.}
 		\end{figure}

Now the idea is to add the elementary trajectories obtained by the preliminary construction on disjoint supports centered in $x_i \in [a+\delta+\e, b-\delta-\e]$, that are close enough, and with a small enough $\e$ so that the rectangles $[\delta, T-\delta] \times [x_0 - \e/2, x_0+\e/2]$ form a $\delta$-covering set. Take $\e_0 \leq (b-a-2\delta)/2$ small enough for the preliminary construction to work, and such that $\e_0\max\left(1/\nu_1, 1/\nu_2\right) < T-2\delta$. We then define the following sequence: take $N \in \NN$ large enough so that
$$ \e:= \frac{b-a-2\delta}{2N-1} \leq \e_0$$
and define, for $1\leq i \leq N$,
$$x_i:=a+\delta + \left(2i-\frac32\right) \e,$$
and $(\bar{u}_i, \bar{v}_i, \bar{h}_i)$ the trajectory obtained by the preliminary construction corresponding to the chosen $\e$, centered in $x_i$. Let $\mathcal{Q}_i$ be a smooth closed subset of $\Lambda_{\e, x_i}$ containing $[\delta, T-\delta] \times [x_i - \e/2, x_i+\e/2]$ in its interior.
Then, 
$$\mathcal{Q}_\delta:= \bigcup_{i=1}^N [\delta, T-\delta] \times [x_i - \e/2, x_i+\e/2]$$
is a $\delta$-covering set, 
$$\mathcal{Q}:=\left(\bigcup_i \mathcal{Q}_i \right) $$
is a smooth closed set such that $\mathcal{Q}_\delta \subset \overset{\circ}{\mathcal{Q}}$, and we can define	
	\begin{equation}\label{ReturnTrajectoryFinal}(\bar{u}, \bar{v}, \bar{h}):=\sum_{i=1}^N (\bar{u}_i, \bar{v}_i, \bar{h}_i),\end{equation}
		which is supported in $[0,T]\times[a,b]$, and 
		satisfies \eqref{uSuitable}.

\begin{figure}[!h]
\centerline{\includegraphics[scale=1]{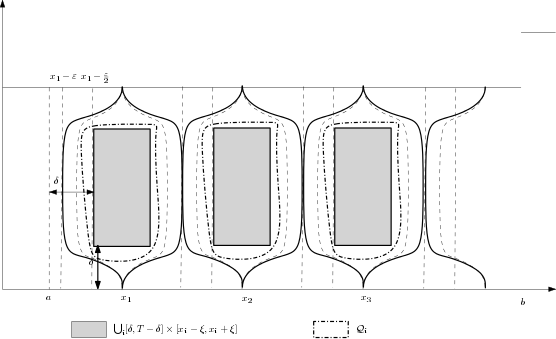}}
		\caption{\label{fig:FinalSupport} \textit{Putting elementary trajectories side by side. The rectangles form a covering set.}}
		\end{figure}

  \subsection{Local controllability with two controls and Gromov inversion}\label{2controls}
  
  \
  
 We now have our return trajectory $(\bar{u}, \bar{v}, \bar{h})$. Now let $R>0$, and notice that for all $\kappa>0$, $(\kappa\bar{u}, \kappa^3\bar{v}, \kappa\bar{h})$ is also a return trajectory, with the same support. Thus, we can now suppose without loss of generality, that 
 \begin{equation}\label{SmallReturn}\|(\bar{u}, \bar{v}, \bar{h})\|_{\left(\ce^{11}\right)^3}\leq \frac{R}2.\end{equation}
 Let $u,v \in \ce^k([0,T]\times [0,L])$, $h_1, h_2 \in \ce^{k-2}([0,T]\times [0,L])$. Let us consider the trajectory $(\bar{u}+u, \bar{v}+v)$, controlled by $(\bar{h}+h_1, h_2)$, we get the following control system for $u$ and $v$:
  \begin{equation}\label{TwoControlsLocal}
  \left\{\begin{aligned}
  \wave_{\nu_1} u & = h_1, \\
  \wave_{\nu_2} v & = u^3+ 3\bar{u} u^2 + 3 \bar{u}^2 u + h_2, \\
  u( \ \cdot \ , 0) & = 0,  \\
  u( \ \cdot \ , L) & = 0, \\
  v( \ \cdot \ , 0) & = 0, \\
  v( \ \cdot \ , L) & = 0. 
  \end{aligned} \right.\end{equation}
  This is a coupled semilinear system with a source term, and falls in the category of systems \eqref{SystemGeneral2controls}. The aim of this section is to prove the following proposition:
  \begin{Prop} \label{InternalTwoControls}
  Let $k \geq 2$, $0 \leq a < b \leq L$, $T>0$ such that
  $$T > 2(L-b)\max\left(\frac{1}{\nu_1}, \frac{1}{\nu_2}\right), \ T>2a\max\left(\frac{1}{\nu_1}, \frac{1}{\nu_2}\right).$$
  For every $0<\delta <\min\left(T/2,(b-a)/2\right)$ satisfying \eqref{delta},
  for every $\delta$-covering set $\mathcal{Q}_\delta$ of $[0,T] \times [a,b]$, there exists $\eta >0$ and constants $C_1, C_2>0$ depending on $T, \delta, k$ such that, for initial and final values
  $$((u_0,u_1),  (v_0, v_1), (u_0^{f}, u_1^{f}),(v_0^f, v_1^f) ) \in \left(B_{\ce^{k}([0,T]\times [0,L])}(0,\eta) \times B_{\ce^{k-1}([0,T]\times [0,L])}(0,\eta)\right)^4$$
  satisfying \eqref{CoupleCompatibility} at the order $k$,
  there exist controls $h_1, h_2 \in \ce^{k-1}([0,T]\times [0,L])$ satisfying 
  \begin{align}
  \label{ControlSupports2}
  &\textup{supp} \ h_i \subset \mathcal{Q}_\delta,
  &i=1,2,\\
  \label{hSmallness2}
  &\|h_i\|_{\ce^{k-2}} \leq C_1 \|((u_0,u_1),  (v_0, v_1), (u_0^{f}, u_1^{f}),(v_0^f, v_1^f) )\|_{\left(\ce^k \times \ce^{k-1}\right)^4},
  &i=1,2,
  \end{align}
  such that the corresponding solution of \eqref{SystemGeneral2controls} with initial values $((u_0, u_1), (v_0, v_1))$ satisfies
 \begin{equation}\left\{\begin{aligned} u(T,\ \cdot \ )&=u_0^{f},&
 u_t(T,\ \cdot \ )&=u_1^{f},\\
 v(T, \ \cdot \ )&=v_0^f,&
 v_t(T, \ \cdot \ )&=v_1^f.\end{aligned}\right.\end{equation}
  \begin{equation}\label{ySmallness2}
  \|(u,v)\|_{(\ce^k)^2} \leq C_2 \|((u_0,u_1),  (v_0, v_1), (u_0^{f}, u_1^{f}),(v_0^f, v_1^f) )\|_{\left(\ce^k \times \ce^{k-1}\right)^4} .
  \end{equation}
  \end{Prop}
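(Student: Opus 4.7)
The overall strategy is to reduce the semilinear problem to a linear controllability result via Banach's fixed-point theorem, and to prove the linear result by cascading the classical single-rectangle controllability from \cite[Chapter 5]{Li} along the $\delta$-covering set.

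More concretely, writing the second equation of \eqref{TwoControlsLocal} as
$$\square_{\nu_2} v = 3\bar u^2\, u + g + h_2, \qquad g := u^3 + 3\bar u\, u^2,$$
the first step is to establish the controllability of the linearised system
$$\square_{\nu_1} u = h_1, \qquad \square_{\nu_2} v = 3\bar u^2\, u + g + h_2$$
for an arbitrary source $g \in \ce^{k-2}$, with controls $(h_1,h_2)$ supported in $\mathcal{Q}_\delta$ and satisfying the analogue of estimates \eqref{hSmallness2}--\eqref{ySmallness2} augmented by a $\|g\|_{\ce^{k-2}}$ term. The linear coupling $3\bar u^2 u$ is harmless, since $\bar u\in \ce^\infty$ is fixed and only affects the constants. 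When $\mathcal{Q}_\delta$ consists of a single rectangle $[\delta,T-\delta]\times[a+\delta,b-\delta]$, this follows by the classical approach of \cite[Chapter 5, \S 5.2--5.3]{Li}: one prescribes $(u,v)$ freely inside the control rectangle (which defines $(h_1,h_2)$ through the equations) and solves the mixed Cauchy problems on the two side regions $[0,T]\times[0,a+\delta]$ and $[0,T]\times[b-\delta,L]$ with compatible boundary data at $x=a+\delta$ and $x=b-\delta$. The time condition \eqref{delta} is precisely what allows the initial data at $t=0$ and the target at $t=T$ to match via the characteristics, while the compatibility conditions \eqref{CoupleCompatibility} preserve the $\ce^k$ regularity up to the corners.

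For a general $\delta$-covering set with $N$ rectangles, the construction is iterated across the rectangles. The covering condition $(a_{i+1}-b_i)\max(1/\nu_1,1/\nu_2)<T-2\delta$ ensures that, for each $i$, the characteristic cones emanating from the $i$-th rectangle cover the gap $[b_i,a_{i+1}]$ within time $T-2\delta$, so consecutive pieces can be glued smoothly while keeping the support of each control strictly inside the corresponding rectangle. Using a partition of unity in space subordinate to the covering of $[a,b]$ by the intervals $[a_i,b_i]$, one decomposes the initial/final data into $N$ pieces, each handled by the single-rectangle result, and sums the resulting trajectories and controls. With this linear result in hand, define $\Phi:(u_\star,v_\star)\mapsto (u,v)$, where $(u,v,h_1,h_2)$ is produced by applying the linear result with source $g := u_\star^3 + 3\bar u u_\star^2$. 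Since $g$ is $\mathcal{O}(\rho^2)$ in $\ce^{k-2}$ whenever $(u_\star,v_\star)$ lies in a ball of radius $\rho$ in $(\ce^k)^2$, the map $\Phi$ maps a small ball into itself and is a contraction, so Banach's fixed-point theorem produces a solution of \eqref{TwoControlsLocal} satisfying \eqref{ControlSupports2}--\eqref{ySmallness2}.

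The main obstacle is the cascading construction in the linear step: one must simultaneously preserve the boundary conditions at $x=0,L$, the corner compatibility conditions \eqref{CoupleCompatibility}, and the strict support of each control in its rectangle, while obtaining constants $C_1, C_2$ depending only on $T, \delta, k$. The fixed-point step is otherwise routine smoothness bookkeeping, exploiting the fact that $u\mapsto u^3 + 3\bar u u^2$ vanishes to second order at $u=0$ so that the smallness of $\eta$ delivers both invariance and contraction.
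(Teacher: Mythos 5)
Your proposal departs from the paper in two ways. The first departure is legitimate: you linearise around the frozen source $g = u_\star^3 + 3\bar u\, u_\star^2$ and close the argument with a Banach fixed point, whereas the paper directly invokes semilinear boundary controllability for the full vector system (Propositions \ref{LiBoundary2} and \ref{LiBoundary1}, adapted from Li's book). That is a different route to the same end, and either can be made to work; yours pushes the iteration into view, the paper delegates it to Li's results.

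The second departure, however, contains a genuine gap. For the $N$-rectangle case you propose to ``decompose the initial/final data into $N$ pieces'' via ``a partition of unity in space subordinate to the covering of $[a,b]$ by the intervals $[a_i,b_i]$'' and sum the resulting trajectories. Two problems. First, the intervals $[a_i,b_i]$ in a $\delta$-covering set are separated by nonempty gaps ($a_{i+1}>b_i$), so they do \emph{not} cover $[a,b]$, and no such partition of unity exists. Second, and more structurally, even if you somehow split the data, applying the single-rectangle result to one piece with control in $[\delta, T-\delta]\times[a_i,b_i]$ alone would require the stronger time condition $T-2\delta > 2\max(a_i, L-b_i)\max(1/\nu_1,1/\nu_2)$ for the full domain $[0,L]$. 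The whole point of Definition \ref{CoveringSets} is that the \emph{union} $\mathcal{Q}_\delta$ satisfies the geometric control condition under \eqref{delta} even when no individual rectangle does, so the decomposed problems are in general uncontrollable in time $T$. The paper handles the cascading differently: it uses one-sided boundary control at $a+2\delta$ and $b-2\delta$ to steer the side strips $[0,a+2\delta]$ and $[b-2\delta,L]$, two-sided boundary control at $b_i-\delta_i$ and $a_{i+1}+\delta_i$ to steer the gap strips, fills the interior of each rectangle with a time-cutoff interpolation of forward- and backward-evolving Cauchy solutions, and glues with a spatial cutoff $\xi$. The controls are then read off from the equations and are automatically supported in $\mathcal{Q}_\delta$ because $\xi$ and $\phi$ vary only inside the rectangles. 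You need to replace your partition-of-unity summation with this strip-plus-fill-in construction (or an equivalent) for the argument to close.
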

  
  \begin{Rq}
  It is clear, by Definition \ref{CoveringSets}, that for any $0<\delta<\min\left((b-a)/4,T/2\right) $ such that 
  \eqref{delta} is satisfied, $[\delta, T-\delta]\times [a+\delta, b-\delta]$ is a $\delta$-covering set of $[0,T]\times [a,b]$. Thus Proposition \ref{InternalTwoControls} implies Proposition \ref{InternalTwoControls1}.
  \end{Rq}

  To prove this proposition, we use the following propositions, which are particular cases of more general quasilinear results proved in \cite{LiRao} (see also \cite[chapter 5, sections 5.3 and 5.4]{Li}):
  \begin{Prop}[two--sided control]\label{LiBoundary2}
  Let $k\geq 2$, $L>0$, $T>0$, $F\in \ce^\infty(\RR^2, \RR^2)$, $\nu_1, \nu_2>0$. If
  $$T>L\max\left(\frac{1}{\nu_1}, \frac{1}{\nu_2}\right),$$
  then there exists $\eta >0$ and a constant $C>0$ depending on $T, k$, such that for any initial and final values
  $$(U_0,U_1, U_0^f, U_1^f) \in B_{\left(\ce^k ([0,L])^2 \times \ce^{k-1}([0,L])^2\right)^2}(0, \eta)$$
  there exist controls $H_1$ and $H_2\in \ce^k([0,T], \RR^2)$ satisfying compatibility conditions
  
  \begin{equation}\label{2BoundaryCompatibility}
\left\{ \begin{aligned}
P_{n,i}^{f_i}\left(J_x^n(U_0)(0), J_x^{n-1}(U_1)(0), (0,\cdots,0)\right)&=\partial_t^n H_{1i}(0), \\
P_{n,i}^{f_i}\left(J_x^n(U_0)(L), J_x^{n-1}(U_1)(L), (0,\cdots,0)\right)&=\partial_t^n H_{2i}(0), \\
P_{n,i}^{f_i}\left(J_x^n(U_0^f)(0), J_x^{n-1}(U_1^f)(0),  (0,\cdots,0)\right)&=\partial_t^n H_{1i}(T),\\
P_{n,i}^{f_i}\left(J_x^n(U_0^f)(L), J_x^{n-1}(U_1^f)(L), (0,\cdots,0)\right)&=\partial_t^n H_{2i}(T),
\end{aligned}\right.
\quad \forall n\leq k, i=1,2.
  \end{equation}
  such that the solution to the vector system
  \begin{equation}\label{TwoControls2sided}\left\{
  \begin{aligned}
  \partial_{tt} U -\begin{pmatrix} \nu_1^2 & 0 \\ 0 & \nu_2^2 \end{pmatrix}\partial_{xx} U & = F(U) , \ x \in (0,L)\\
  U(t,0)&=H_1, \\
  U(t,L)&=H_2, \\
  U(0)&=U_0,\\
  U_t(0)&=U_1,
  \end{aligned}
  \right.
  \end{equation}
   satisfies
  $$\left\{\begin{aligned}
   U(T)&=U_0^f,\\
   U_t(T)&= U_1^f, 
    \end{aligned}\right.$$
  \begin{equation}
  \label{Smallness2side}
  \|U\|_{\ce^k}  \leq C\|(U_0,U_1), (U_0^f, U_1^f)\|_{\left(\ce^k \times \ce^{k-1}\right)^2}.\end{equation}

  \end{Prop}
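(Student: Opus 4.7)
The plan is to reduce the semilinear second-order wave system \eqref{TwoControls2sided} to a first-order semilinear hyperbolic system via Riemann invariants, and then invoke the constructive semi-global boundary controllability theory for quasilinear hyperbolic systems developed in \cite{LiRao} (see also \cite[chap.~5, \S5.3-5.4]{Li}). Setting
$$r_i^\pm := \partial_t U_i \pm \nu_i \partial_x U_i, \quad i=1,2,$$
and retaining $U_1, U_2$ as additional dependent variables, one obtains a six-component semilinear first-order system with characteristic speeds $\{+\nu_1, -\nu_1, +\nu_2, -\nu_2, 0, 0\}$, the two zero-speed components being recoverable from the Riemann invariants by time integration.

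Given the time condition $T > L\max(1/\nu_1, 1/\nu_2)$, I would choose $0 < T_1 < T_2 < T$ with $T_2 - T_1 > L\max(1/\nu_1, 1/\nu_2)$ and build $U$ in three pieces: (a) the forward Cauchy solution of \eqref{TwoControls2sided} with data $(U_0, U_1)$, which for small data exists on $[0, T_1] \times [0, L]$ in $\ce^k$ by Picard iteration on the Riemann invariants along characteristics, with reflection at $x = 0, L$ handled by prescribing the outgoing invariants as functions of the incoming ones through the (not-yet-chosen) boundary traces; (b) the symmetric backward solution from $(U_0^f, U_1^f)$ on $[T_2, T] \times [0, L]$; (c) on $[T_1, T_2] \times [0, L]$, a $\ce^k$ solution of the mixed problem whose values (and time derivatives up to order $k$) at $t = T_1$ and $t = T_2$ coincide with pieces (a) and (b). The time gap $T_2 - T_1 > L/\nu_i$ is precisely what is needed to freely prescribe lateral boundary traces $H_1, H_2$ realizing this matching, since every characteristic emanating from $\{t = T_1\}$ reaches one of the lateral sides before $t = T_2$ and, conversely, every point of $\{t = T_2\}$ is reachable from lateral data issued after $t = T_1$.

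The controls are then extracted as $H_1(t) := U(t, 0)$, $H_2(t) := U(t, L)$. The compatibility conditions \eqref{2BoundaryCompatibility} hold automatically by the same derivation that produced \eqref{CoupleCompatibility}: iterated differentiation of the PDE expresses $\partial_t^n U$ at the four corners as the polynomials $P_{n,i}^{f_i}$ applied to the space jets of the Cauchy/final data. The smallness estimate \eqref{Smallness2side} follows from tracking the Lipschitz dependence of each construction step on the data in a neighbourhood of $0$, with $C$ absorbing the constants of the Picard iteration and of the matching construction.

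The main technical obstacle is the $\ce^k$-matching in step (c): one must choose $H_1, H_2$ on $[T_1, T_2]$ so that all time derivatives up to order $k$ of the resulting solution agree with those of pieces (a), (b) at $t = T_1, T_2$. Because the PDE iteratively expresses $\partial_t^n U(t, 0)$ in terms of lower-order time derivatives and space derivatives at $x = 0$, this reduces to a finite-dimensional Hermite interpolation problem at the four points $(T_1, 0), (T_2, 0), (T_1, L), (T_2, L)$, which is solvable. All details in the more general quasilinear setting are in \cite{LiRao}; our semilinear situation is strictly simpler, which is why the statement is essentially a direct transcription of their theorem.
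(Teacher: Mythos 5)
Your reduction to a first-order system and the forward/backward/middle decomposition are indeed the spirit of the Li--Rao constructive proof that the paper invokes without reproducing (the paper states this proposition as a direct special case of \cite{LiRao}). The genuine gap is in step (c), which is where all of the controllability content resides and which, as written, is circular: requiring a $\ce^k$ solution on $[T_1,T_2]\times[0,L]$ whose traces at $t=T_1$ and $t=T_2$ coincide with prescribed profiles is precisely the two-sided boundary controllability statement you set out to prove, on a shorter time interval. Identifying the ``main technical obstacle'' as a Hermite interpolation at the four corners confuses the compatibility conditions (a finite-dimensional question localized at the corners) with the infinite-dimensional problem of matching $U(T_i,\cdot)$ and $U_t(T_i,\cdot)$ as functions of $x$ on all of $[0,L]$; the latter does not follow from the former. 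The device that actually produces this matching in \cite{LiRao} and \cite[chap.~5]{Li} is the \emph{sideways} Cauchy problem: fix an interior abscissa $x_0\in(0,L)$, read off the pair $(U,U_x)(\cdot,x_0)$ on the backward determinacy triangle of the initial data and on the forward determinacy triangle of the final data (these two triangles are disjoint in time precisely because $T>L\max(1/\nu_1,1/\nu_2)$), extend this pair of one-variable functions across the time gap in $\ce^k$, and then solve the first-order system with $x$ as the evolution variable towards $x=0$ and $x=L$. The change of evolution variable from $t$ to $x$ is what converts the geometric remark that every characteristic issued from $\{t=T_1\}$ reaches the lateral boundary into a well-posed construction, and it is also what yields the a priori estimate behind \eqref{Smallness2side}. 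Relatedly, in your step (a) the forward Cauchy problem on $[0,T_1]\times[0,L]$ from $(U_0,U_1)$ alone is only determined on the backward light triangle of $\{t=0\}$, not on the full time strip, which is another reason the sideways step cannot be replaced by a time-marching gluing.
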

  \begin{Prop}[one-sided control]\label{LiBoundary1}
  Let $k\geq 2$, $L>0$, $T>0$, $F\in \ce^\infty(\RR^2, \RR^2)$, $\nu_1, \nu_2>0 $. If
  $$T>2L\max\left(\frac{1}{\nu_1}, \frac{1}{\nu_2}\right),$$
  then there exists $\eta >0$ and a constant $C>0$ depending on $T, k$, such that for any initial and final values
  $$(U_0,U_1, U_0^f, U_1^f) \in B_{\left(\ce^k ([0,L])^2 \times \ce^{k-1}([0,L])^2\right)^2}(0, \eta)$$
   there exists a control $H\in \ce^k([0,T], \RR^2)$ satisfying compatibility conditions
   \begin{equation}\label{1BoundaryCompatibility}
   \left\{ \begin{aligned}
P_{n,i}^{f_i}\left(J_x^n(U_0)(0), J_x^{n-1}(U_1)(0), (0,\cdots,0)\right)&=\partial_t^n H_{i}(0) \quad (resp. \ 0), \\
P_{n,i}^{f_i}\left(J_x^n(U_0)(L), J_x^{n-1}(U_1)(L), (0,\cdots,0)\right)&=0 \quad (resp. \partial_t^n H_{i}(0)), \\
P_{n,i}^{f_i}\left(J_x^n(U_0^f)(0), J_x^{n-1}(U_1^f)(0),  (0,\cdots,0)\right)&=\partial_t^n H_{i}(T) \quad (resp. \ 0),\\
P_{n,i}^{f_i}\left(J_x^n(U_0^f)(L), J_x^{n-1}(U_1^f)(L), (0,\cdots,0)\right)&=0 \quad (resp. \partial_t^n H_{i}(T)),
\end{aligned}\right.
\quad \forall n\leq k, i=1,2.
   \end{equation}
   such that the solution to the vector system
  \begin{equation}\label{TwoControls1sided}\left\{
  \begin{aligned}
  \partial_{tt} U -\begin{pmatrix} \nu_1^2 & 0 \\ 0 & \nu_2^2 \end{pmatrix}\partial_{xx} U & = F(U) , \ x \in (0,L),\\
  U(t,L)&=0 \ (resp. \ U(t,L)=H(t)), \\
  U(t,0)&=H, \ (resp. \ U(t,0)=0), \\
  U(0)&=U_0,\\
  U_t(0)&=U_1,
  \end{aligned}
  \right.
  \end{equation}
   satisfies
  $$ \left\{\begin{aligned}
   U(T)&=U_0^f,\\
   U_t(T) &= U_1^f, 
   \end{aligned}\right.$$
   \begin{equation}
   \label{Smallness1side}
  \|U\|_{\ce^k} \leq C\|(U_0,U_1), (U_0^f, U_1^f)\|_{\left(\ce^k \times \ce^{k-1}\right)^2}.   \end{equation}
  
  \end{Prop}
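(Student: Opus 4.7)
The plan is to derive Proposition \ref{LiBoundary1} from the general boundary exact controllability results for quasilinear wave systems due to Li and Rao \cite{LiRao} (see also \cite[Chapter 5, Sections 5.3 and 5.4]{Li}). The system \eqref{TwoControls1sided} is a diagonal coupled second-order wave system — no principal-part coupling, with the semilinear interaction confined to the source term $F(U)$ — so Li and Rao's framework applies directly, either by treating the wave system as a $\ce^k$-quasilinear wave system or, equivalently, by reducing it to a first-order strictly hyperbolic $4 \times 4$ system via the Riemann invariants $R_i^{\pm} := \partial_t U_i \pm \nu_i \partial_x U_i$, whose characteristic speeds are $\pm \nu_1, \pm \nu_2$.

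Step one, I would linearize \eqref{TwoControls1sided} around $U = 0$, obtaining two uncoupled wave equations on $[0, L]$ with Dirichlet condition at $x = L$, boundary data $H$ at $x = 0$, and a linear source $DF(0) \cdot U$. Under the hypothesis $T > 2L \max(1/\nu_1, 1/\nu_2)$, every characteristic of either component has time to travel from the controlled endpoint to the uncontrolled endpoint and back, and so each scalar equation is one-sided exactly boundary-controllable in time $T$. This is classical Russell-type one-sided boundary controllability (in the first-order reformulation, the boundary conditions prescribe $R_i^+(t,0)$, and the outgoing $R_i^-(t,0)$ is free to serve as control); it yields a continuous linear operator from compatible initial/final data to a control $H \in \ce^k$ with the corresponding linear estimate.

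Step two, I would promote the linear result to the full semilinear system \eqref{TwoControls1sided} via a contraction mapping argument in a small ball of $\ce^k \times \ce^{k-1}$. Given small data $(U_0, U_1, U_0^f, U_1^f)$, one iterates: at each step, substitute the current trajectory $\tilde U$ into $F(\tilde U)$, treat it as a source term, and solve the linear one-sided control problem with that source to produce a new control $H$ and a new trajectory. The smoothness of $F$, combined with the smallness of the data and the energy estimates for the linear wave system, makes this map a contraction in the $\ce^k$ topology, and the fixed point furnishes the control $H$ and trajectory $U$ claimed in the statement. The bound \eqref{Smallness1side} is inherited from the linear estimate with a constant that only degrades by a fixed factor depending on $T$ and $k$.

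The main obstacle is the careful matching of the $\ce^k$-compatibility conditions \eqref{1BoundaryCompatibility} at the four corners: at each order $n \leq k$, the prescribed values of $\partial_t^n H$ at $t = 0$ and $t = T$ must agree with the values of $\partial_t^n U$ computed from the PDE and the initial (resp. final) data via the recursive operators $\mathcal{D}_n^{(i)}$ of \eqref{CompatibilityOperators}. This is exactly the content of the polynomials $P_{n,i}^{f_i}$, so the conditions \eqref{1BoundaryCompatibility} are by construction the conditions required by Li-Rao to produce a $\ce^k$ solution up to the corners; the verification is a bookkeeping exercise parallel to the one carried out in Section 1 for the internal control problem, and is the only non-mechanical step in translating Li-Rao's statement into the form used here.
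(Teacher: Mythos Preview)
Your proposal is correct and aligns with the paper's treatment: the paper does not give an independent proof of this proposition but simply states it as a particular case of the general quasilinear boundary controllability results of Li and Rao \cite{LiRao} (see also \cite[Chapter 5, Sections 5.3 and 5.4]{Li}). Your sketch of the linearize-then-contract argument, together with the first-order Riemann-invariant reduction and the handling of corner compatibility via the operators $\mathcal{D}_n^{(i)}$, is exactly the mechanism behind those cited results, so you are effectively unpacking what the paper leaves as a black-box citation.
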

  \begin{proof}[Proof of Proposition \ref{InternalTwoControls}]
  Let us note 
  $$\mathcal{Q}_\delta=\bigcup_{1\leq i \leq N} [\delta, T-\delta] \times [a_i, b_i],$$
  for some $N\geq 1$. for every $1\leq i \leq N-1$, let $0<\delta_i < \min ((b_{i+1}-a_{i+1})/2,(b_i-a_i)/2)$ such that
  \begin{equation}\label{deltai}T-2\delta_i > (a_{i+1}-b_i+4\delta_i)\max\left(\frac{1}{\nu_1}, \frac{1}{\nu_2}\right).\end{equation}
  Thanks to Propositions \ref{LiBoundary2} and \ref{LiBoundary1}, Definition \ref{CoveringSets} and conditions  \eqref{delta} and \eqref{deltai}, there exists $\eta>0$ such that for initial and final values 
  $$((u_0,u_1),  (v_0, v_1), (u_0^{f}, u_1^{f}),(v_0^f, v_1^f) )\in \left(B_{\ce^k([0,L])}(0, \eta) \times B_{\ce^{k-1}([0,L])}(0, \eta)\right)^4$$ 
  satisfying \eqref{CoupleCompatibility}, 
  \begin{itemize}
  \item There exist boundary controls $u_1^{(i)}, u_2^{(i)} \in \ce^k([0,T-2\delta])$ at $b_i-\delta_i$ and $a_{i+1}+\delta_i$ that steer $(u,v)$ on $[b_i-\delta_i,a_{i+1}+\delta_i]$ from $(y_0,y_1)_{|[b_i-\delta_i,a_{i+1}+\delta_i]}$ to $(z_0,z_1)_{|[b_i-\delta_i,a_{i+1}+\delta_i]}$. \item There exist two boundary controls $u_1, u_2 \in \ce^k([0,T-2\delta])$ at $a+2\delta$ and $b-2\delta$ that steer $(u,v)$ on $[0,a+2\delta]$ from $(y_0,y_1)_{|[0,a+2\delta]}$ to $(z_0,z_1)_{|[0,a+2\delta]}$, and from $(y_0,y_1)_{|[b-2\delta, L]}$ to $(z_0,z_1)_{|[b-2\delta, L]}$ while satisfying the boundary conditions of the system at $0$ and $L$.
  \end{itemize}
  \begin{figure}[h!]
 \centerline{\includegraphics[scale=0.75]{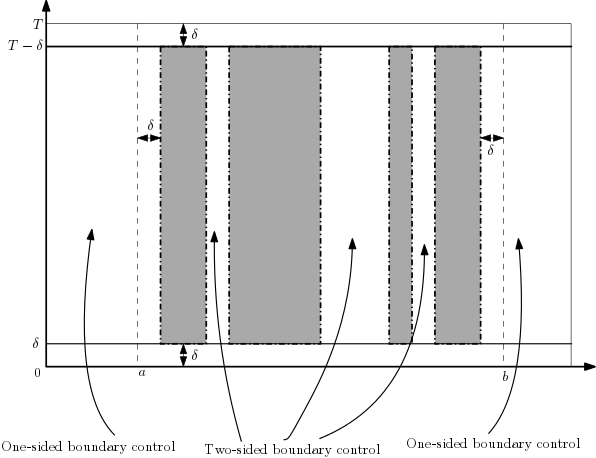}}
 		\caption{\label{fig:Stationary} \textit{Using boundary control results outside of the covering set.}}
 \end{figure}
  We note $\delta_0=\delta_N:=\delta$, and $(u^\ast, v^\ast)$ the corresponding trajectory on $[0,a+2\delta]\cup [b-2\delta, L] \cup \bigcup_{1\leq i \leq N-1} [b_i-\delta_i,a_{i+1}+\delta_i]$. Then, \eqref{Smallness1side} and \eqref{Smallness2side} imply
  \begin{equation}\label{estimateStar}
  \|(u^\ast, v^\ast)\|_{\ce^k} \leq C\|(u_0,u_1), (v_0,v_1),(u_0^f, u_1^f), (v_0^f, v_1^f)\|_{\left(\ce^k \times \ce^{k-1}\right)^4} \end{equation}
  for some constant $C>0$.
  
  On the other hand, for $\eta>0$ small enough, for initial and final conditions
  $$((u_0,u_1),  (v_0, v_1), (u_0^{f}, u_1^{f}),(v_0^f, v_1^f) ) \in \left(B_{\ce^{k}([0,T]\times [0,L])}(0,\eta) \times B_{\ce^{k-1}([0,T]\times [0,L])}(0,\eta)\right)^4,$$
   for $1\leq i \leq N$ the forward evolving solutions $\left(u_f^{(i)}, v_f^{(i)}\right)$ of the vector equations  
  $$\left\{\begin{aligned}
  \wave_{\nu_1}u&= f_1(\bar{u}+u, \bar{v}+v)-f_1(\bar{u},\bar{v}), \\
  \wave_{\nu_2} v&= f_2(\bar{u}+u, \bar{v}+v)-f_2(\bar{u},\bar{v}), \\
  (u,v)(t,a_i)&=(u,v)(t,b_i)= (0,0), \\
  (u,v)(0,\ \cdot \ )&= (u_{0|[a_i,b_i]}, v_{0|[a_i,b_i]}), \\
  (u,v)_t(0, \ \cdot \ )&= (u_{1|[a_i,b_i]},v_{1|[a_i,b_i]}).
  \end{aligned}\right.,$$
  are defined on $[0, T-2\delta] \times [a_i, b_i]$.
   Let us also note $\left(u_b^{(i)}, v_b^{(i)}\right)$ the backward evolving solutions of the vector equations on $[0, T-2\delta] \times [a_i, b_i]$
   $$\left\{\begin{aligned}
  \wave_{\nu_1} u&= f_1(\bar{u}+u, \bar{v}+v)-f_1(\bar{u},\bar{v}), \\
  \wave_{\nu_2} v&= f_2(\bar{u}+u, \bar{v}+v)-f_2(\bar{u},\bar{v}), \\
  (u,v)(t,a_i)&=(u,v)(t,b_i)= (0,0), \\
   (u,v)(T-2\delta,\ \cdot)&= (u_{0|[a_i,b_i]}^f, v_{0|[a_i,b_i]}^f), \\
   (u,v)_t(T-2\delta, \ \cdot \ )&= (u_{1|[a_i,b_i]}^f, v_{1|[a_i,b_i]}^f).
   \end{aligned}\right.$$
   Then we define $(\tilde{u}, \tilde{v})$ by
   $$(\tilde{u}, \tilde{v})= \left(u_f^{(i)}, v_f^{(i)}\right) \phi + \left(u_b^{(i)}, v_b^{(i)}\right)(1-\phi) , \textrm{on} \ [a_i, b_i], \forall i \leq N, $$
   where $\phi$ is a time cut-off function such that
   $$\phi(0)=1, \quad \phi(T-2\delta)=0.$$
  Note that, by well-posedness of the Cauchy problems, there exists $C^\prime>0$ such that the norm of $(\tilde{u}, \tilde{v})$ satisfies
  \begin{equation}\label{estimateTilde}
  \|(\tilde{u}, \tilde{v})\|_{(\ce^k)^2} \leq C^\prime \|(u_0,u_1), (v_0,v_1),(u_0^f, u_1^f), (v_0^f, v_1^f)\|_{\left(\ce^k \times \ce^{k-1}\right)^4} . \end{equation}
  Finally, let us define $(u^{\ast\ast}, v^{\ast\ast})$ by smoothly extending $(u^\ast, v^\ast)$ on $\bigcup_{1\leq i \leq N}[a_i+\delta_{i-1}, b_i-\delta_i]$ with
  \begin{equation}\label{ExtensionEstimate}\|(u^{\ast\ast}, v^{\ast\ast})\|_{(\ce^k)^2} \leq C^{\prime\prime} \|(u^\ast, v^\ast)\|_{\ce^k} ,\end{equation}
  where $C^{\prime\prime}$ is a constant depending on the $a_i, b_i$.
   Then, we define $(u,v)$ by
   $$(u,v)=\xi(u^{\ast\ast}, v^{\ast\ast}) + (1-\xi) (\tilde{u}, \tilde{v}),$$
   where $\xi$ is a space cut-off function satisfying
   $$\xi=1 \ \textrm{on} \ [0, a+\delta] \cup [b-\delta, L] \cup \bigcup_{1\leq i \leq N-1} [b_i,a_{i+1}] ,$$
   $$\xi=0 \ \textrm{on} \ \bigcup_{1\leq i \leq N} [a_i+\delta_{i-1}, b_i-\delta_i].$$
   
   Then, by construction, we have
   $$\left\{\begin{aligned}
   u(0,\ \cdot \ )&= u_0,&
   v(0,\ \cdot \ )&= v_0, \\
   u_t(0, \ \cdot \ )&= u_1,& 
    v_t(0, \ \cdot \ )&= v_1, \\
   u(T-2\delta, \ \cdot \ )&= u_0^f,& 
   v(T-2\delta, \ \cdot \ )&= v_0^f, \\
   u_t(T-2\delta,\ \cdot \ )&= u_1^f,&
   v_t(T-2\delta,\ \cdot \ )&= v_1^f,
   \end{aligned}\right.$$
   and
  
  $$\begin{aligned}
   \textrm{supp} \  (\wave_{\nu_1} u-f_1(\bar{u}+u, \bar{v}+v)- f_1(\bar{u}, \bar{v})) &\subset \mathcal{Q}_\delta, \\
  \textrm{supp} \ (\wave_{\nu_2} v-f_2(\bar{u}+u, \bar{v}+v)- f_2(\bar{u}, \bar{v})) &\subset \mathcal{Q}_\delta,\end{aligned}$$
   Finally, \eqref{estimateStar}, \eqref{estimateTilde} and \eqref{ExtensionEstimate} imply that there exists a constant $C_2 >0$ such that \eqref{ySmallness2} holds, and, by continuity of the $f_i$, noting
$$h_i:= \wave_{\nu_i} u-f_i(\bar{u}+u, \bar{v}+v)- f_i(\bar{u}, \bar{v}), \quad i=1,2,$$
there exists a constant $C_1>0$ such that \eqref{hSmallness2} holds.
 \end{proof}
	Now, we define
		$$\mathcal{A}=\left\{(u,v,h) \in \left(\ce^{2}(\mathcal{Q})\right)^3 \ | \ \forall (t,x) \in \mathcal{Q}, \ u(t,x) \neq 0\right\},$$
		which is clearly nonempty, and
		$$\forall (u,v,h) \in \ce^2(\mathcal{Q})^3, \De(u,v,h)=\left(\wave_{\nu_1} u - h, \wave_{\nu_2} v - u^3\right).$$
		Then, we have the following proposition, similar to Proposition \ref{InfinitesimalInversion}:
		\begin{Prop}\label{InfinitesimalInversion2}
		$\mathscr{D}$ admits an infinitesimal inversion of order $2$ over $\mathcal{A}$.
		\end{Prop}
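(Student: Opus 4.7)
The plan is to mimic the proof of Proposition~\ref{InfinitesimalInversion} verbatim, only replacing the generic non-degeneracy condition $\partial f_2/\partial u(0,0)\neq 0$ by the explicit non-vanishing of $3u^2$ on $\mathcal{A}$. First I would compute the Fréchet differential of $\mathscr{D}$ at $(u,v,h)\in\mathcal{A}$: since $\mathscr{D}(u,v,h)=(\wave_{\nu_1}u-h,\;\wave_{\nu_2}v-u^3)$, one gets
\[
\mathscr{L}_{(u,v,h)}(\tilde u,\tilde v,\tilde h)=\bigl(\wave_{\nu_1}\tilde u-\tilde h,\;\wave_{\nu_2}\tilde v-3u^2\tilde u\bigr).
\]

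Next, given $(h_1,h_2)\in\ce^2(\mathcal{Q})^2$ and $(u,v,h)\in\mathcal{A}$, I would exhibit an explicit preimage, prescribing
\[
\tilde v=0,\qquad \tilde u=-\frac{h_2}{3u^2},\qquad \tilde h=\wave_{\nu_1}\tilde u-h_1.
\]
By definition of $\mathcal{A}$ the denominator $3u^2$ never vanishes on $\mathcal{Q}$, so these formulas are well-posed; a direct substitution then gives $\mathscr{L}_{(u,v,h)}(\tilde u,\tilde v,\tilde h)=(h_1,h_2)$, which is the algebraic solvability (condition 2 of the definition of infinitesimal inversion).

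Finally I would verify the regularity statement (condition 1). Set $\mathscr{M}_{(u,v,h)}(h_1,h_2):=(\tilde u,\tilde v,\tilde h)$ as above. For fixed $(u,v,h)$, the map $(h_1,h_2)\mapsto \mathscr{M}_{(u,v,h)}(h_1,h_2)$ is clearly a linear differential operator of order $s=2$ (the only second derivatives that appear come from $\wave_{\nu_1}$ applied to $h_2/(3u^2)$). For fixed $(h_1,h_2)$, the dependence on $(u,v,h)$ enters only through $u$ and its first and second spatial/temporal derivatives (produced when $\wave_{\nu_1}$ acts on $h_2/(3u^2)$), so $(u,v,h)\mapsto \mathscr{M}_{(u,v,h)}(h_1,h_2)$ is a (nonlinear) differential operator of order $d=2$, and smoothness of $x\mapsto 1/x$ on $\RR^\ast$ together with the non-vanishing of $u$ on $\mathcal{A}$ ensures that $\mathscr{M}$ is $\ce^\infty$ jointly in $(u,v,h,h_1,h_2)$ with respect to the usual $\ce^2$ topology.

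There is no serious obstacle here; the construction is essentially the same as in Proposition~\ref{InfinitesimalInversion} and the only point to watch is that the relevant non-degeneracy, namely $\partial f_2/\partial u(u,v)=3u^2\neq 0$, is built into the definition of the differential relation $\mathcal{A}$. This is precisely the reason for introducing the return trajectory $(\bar u,\bar v,\bar h)$ of Section~\ref{finalreturn}: it guarantees a neighbourhood $\mathcal{A}\ni(\bar u_{|\mathcal{Q}},\bar v_{|\mathcal{Q}},\bar h_{|\mathcal{Q}})$ on which the above algebraic inversion is available, opening the way for the application of Theorem~\ref{GromovInversion} in the next subsection.
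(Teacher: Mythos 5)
Your proposal is correct and is essentially the proof the paper intends: the paper simply states that Proposition~\ref{InfinitesimalInversion2} is ``similar to Proposition~\ref{InfinitesimalInversion}'', and your argument is exactly that adaptation, specializing $f_1=0$, $f_2=u^3$ so that $\partial f_2/\partial u=3u^2$, which is non-vanishing on $\mathcal{A}$ by construction. The explicit preimage $\tilde v=0$, $\tilde u=-h_2/(3u^2)$, $\tilde h=\wave_{\nu_1}\tilde u-h_1$ and the order and smoothness checks for $\mathscr{M}$ all match the paper's scheme.
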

		Moreover, thanks to \eqref{ReturnTrajectoryFinal} and \eqref{uSuitable},
		\begin{Prop}
			$$(\bar{u}, \bar{v}, \bar{h})_{|\mathcal{Q}} \in \mathcal{A}.$$
		\end{Prop}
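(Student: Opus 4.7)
The plan is to show directly that $\bar u$ vanishes nowhere on $\mathcal{Q}$, since membership in $\mathcal{A}$ amounts to exactly this non-vanishing condition (smoothness of the elementary trajectories is already guaranteed by the preliminary construction, so in particular $(\bar u, \bar v, \bar h)_{|\mathcal{Q}} \in (\ce^2(\mathcal{Q}))^3$). The decomposition $\mathcal{Q}=\bigcup_{i=1}^N\mathcal{Q}_i$ with $\mathcal{Q}_i\subset \Lambda_{\e,x_i}$ reduces the task to checking non-vanishing on each $\Lambda_{\e,x_i}$, and the return trajectory is additive: $\bar u=\sum_j \bar u_j$. So the only thing to verify is that on $\Lambda_{\e,x_i}$ the sum collapses to a single term $\bar u_i$, and that this term is nonzero there.

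First, I would exploit the separation of supports. By construction, $x_{i+1}-x_i=2\e$ and each $\bar u_j$ is supported in $[0,T]\times[x_j-\e,x_j+\e]$ (because $g_j$, $j\ge 1$, is supported in $[3/4-\delta''/2, 3/4+\delta''/2]\subset [0,1]$, and the change of variable $z=|x-x_j|/\lambda(t)$ with $\lambda\le\e$ gives support in $|x-x_j|\le \e$). Hence the supports of $\bar u_j$ for different $j$ are pairwise disjoint except possibly at isolated boundary points $x_i+\e=x_{i+1}-\e$. On the other hand $\Lambda_{\e,x_i}\subset (0,T)\times (x_i-\tfrac34\e,x_i+\tfrac34\e)$ because $0\le\lambda_0\le 1$ on $[0,T]$, and $\tfrac34\e<\e$, so $\Lambda_{\e,x_i}$ is disjoint from $\mathrm{supp}\,\bar u_j$ for every $j\ne i$. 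Consequently $\bar u = \bar u_i$ pointwise on $\Lambda_{\e,x_i}$, and similarly $\bar v=\bar v_i$, $\bar h=\bar h_i$ there.

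Second, I would invoke the output of the preliminary construction: the vanishing set of $\bar u_i=(\wave \bar v_i)^{1/3}$ inside its support is precisely the curve $\{|x-x_i|=(3/4)\e\lambda_0(t)\}\cup\{t\in\{0,T\}\}\cup\{|x-x_i|\ge \e\}$, i.e. $\bar u_i\neq 0$ on the open lens $\Lambda_{\e,x_i}$ (this is exactly what was established via \eqref{uSuitable} in the construction). Since $\mathcal{Q}_i\subset \Lambda_{\e,x_i}$, we get $\bar u(t,x)=\bar u_i(t,x)\neq 0$ for $(t,x)\in\mathcal{Q}_i$. Because every point of $\mathcal{Q}$ lies in some $\mathcal{Q}_i$, this gives $\bar u\neq 0$ on $\mathcal{Q}$, which is the defining condition of $\mathcal{A}$.

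There is no real obstacle here; the whole statement is essentially a bookkeeping verification that the geometric choices made in the covering-set construction (the spacing $2\e$ between centers, the inclusion $\mathcal{Q}_i\subset\Lambda_{\e,x_i}$, and the support bound $\tfrac34\e$ coming from $\lambda_0\le 1$) have been calibrated so that the different elementary trajectories do not interact on each lens $\Lambda_{\e,x_i}$. The only mildly delicate point is to be sure that $\lambda_0\le 1$ holds globally on $[0,T]$ (not just on $[\delta,T-\delta]$ where it equals $1$), which follows from the explicit definition \eqref{lambda} since $e^{-\sqrt{1/(t(T-t))}}\le 1$ on the tails and $\lambda_0$ can be taken in $[0,1]$ on the transition intervals; this ensures the strict inclusion $\Lambda_{\e,x_i}\subset (0,T)\times(x_i-\e,x_i+\e)$ used in the support-separation step.
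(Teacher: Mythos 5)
Your proof is correct and follows the same approach as the paper, which states this Proposition as an immediate consequence of the construction in subsection \ref{finalreturn} (the definition \eqref{ReturnTrajectoryFinal} together with the claim that it satisfies \eqref{uSuitable}). You have simply made explicit the bookkeeping that the paper leaves implicit: the spacing $x_{i+1}-x_i=2\e$ together with $\textup{supp}\,\bar u_j\subset[0,T]\times[x_j-\e,x_j+\e]$ and $\Lambda_{\e,x_i}\subset(0,T)\times(x_i-\tfrac34\e,x_i+\tfrac34\e)$ forces $\bar u=\bar u_i$ on $\Lambda_{\e,x_i}\supset\mathcal{Q}_i$, and the preliminary construction guarantees $\bar u_i\neq 0$ there. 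Your observation that $\lambda_0\le 1$ must hold on the transition intervals (not just where it equals $1$) is a fair point; the paper does not state it explicitly, but it is implicit in the support claim $\textup{supp}(\bar u,\bar v,\bar h)\subset[0,T]\times[x_0-\e,x_0+\e]$, so your reading is the intended one.
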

		
		Now, we can use Theorem \ref{GromovInversion}: there exists $\eta>0$ such that for initial and final conditions 
        $$((u_0,u_1),  (v_0, v_1), (u_0^{f}, u_1^{f}),(v_0^f, v_1^f) ) \in \left(B_{\ce^{11}([0,L])}(0,\eta) \times B_{\ce^{10}( [0,L])}(0,\eta)\right)^4$$
the corresponding trajectories of system \eqref{TwoControlsLocal} with two controls $u^\ast, v^\ast, h_1, h_2$ are small enough in $(\ce^{11})^2 \times (\ce^9)^2$ norm so that $\mathscr{D}$ can be inverted locally around $(\bar{u}+u^\ast, \bar{v}+v^\ast, \bar{h})$, and so that, by the continuity property, $(u,v,h):=\mathscr{D}^{-1}_{(\bar{u}+u^\ast, \bar{v}+v^\ast, \bar{h})} (\theta_1, \theta_2)$ satisfies
		\begin{equation}
		\|(u-\bar{u},v-\bar{v},h-\bar{h})\|_{(\ce^6)^3} \leq \frac{R}2.
		\end{equation}
		Together with \eqref{SmallReturn}, this yields
        \begin{equation}\label{LocalEstimate}
\|(u,v,h)\|_{(\ce^6)^3} \leq R.
\end{equation}
This proves the following local controllability result:
\begin{Th}\label{Mainresult}
Let $R>0$, and $0\leq a < b \leq L$, $T>0$ such that
\begin{equation}\label{TimeCond1} T > 2(L-b)\max\left(\frac{1}{\nu_1}, \frac{1}{\nu_2}\right), \ T>2a\max\left(\frac{1}{\nu_1}, \frac{1}{\nu_2}\right).\end{equation}
There exists $\eta >0$ such that for given initial and final conditions 
$$((u_0,u_1),  (v_0, v_1), (u_0^{f}, u_1^{f}),(v_0^f, v_1^f) ) \in \left(B_{\ce^{11}([0,L])}(0,\eta) \times B_{\ce^{10}( [0,L])}(0,\eta)\right)^4$$
satisfying \eqref{CoupleCompatibility}, there exists $h \in \ce^6 ([0,T]\times[0,L])$ satisfying
$$\textup{supp} \ h \subset [0, T] \times [a, b].$$
such that the corresponding solution $(u,v)\in \ce^6 ([0,T]\times[0,L])$ of \eqref{System} with initial values $((u_0,u_1), (v_0, v_1))$ satisfies
$$\left\{\begin{aligned}u(T,\ \cdot \ )=u_0^{f}, \ &
u_t(T,\ \cdot \ )=u_1^{f},\\
v(T, \ \cdot \ )=v_0^f, \ &
v_t(T, \ \cdot \ )=v_1^f\end{aligned}\right.$$
and \eqref{LocalEstimate} holds.


\end{Th}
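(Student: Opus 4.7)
The plan is to mirror the three-step fictitious-control strategy used in Section~2 to prove Theorem~\ref{NonDegenerateResult}, but to work around the return trajectory $(\bar u,\bar v,\bar h)$ constructed in Sections~3.1--3.2 rather than around $(0,0,0)$. The structural reason this works is Proposition~\ref{InfinitesimalInversion2} combined with the fact that $(\bar u,\bar v,\bar h)_{|\mathcal Q}\in\mathcal A$, which lets Gromov's Theorem~\ref{GromovInversion} be applied with basepoints in a $\ce^{11}$-neighbourhood of the restricted return trajectory.

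First, for data in a sufficiently small $\ce^{11}\times\ce^{10}$ ball satisfying the order-$11$ compatibility conditions, I would apply Proposition~\ref{InternalTwoControls} to the perturbed system \eqref{TwoControlsLocal} on the $\delta$-covering set $\mathcal Q_\delta$ from Section~3.2. This produces two ``fictitious'' controls $\theta_1,\theta_2\in\ce^9$ supported in $\mathcal Q_\delta$ and a trajectory $(u^\ast,v^\ast)$ steering the perturbed system from the prescribed initial data to the prescribed final data; the shift by $(\bar u,\bar v)$ is invisible at $t=0,T$ since $\bar u,\bar v,\bar u_t,\bar v_t$ vanish there. By \eqref{ySmallness2}--\eqref{hSmallness2} both $(u^\ast,v^\ast)$ and $(\theta_1,\theta_2)$ are controlled by the data norm, and the identities $\square_{\nu_1}\bar u=\bar h$, $\square_{\nu_2}\bar v=\bar u^3$ together with the expansion $(\bar u+u^\ast)^3=\bar u^3+3\bar u^2 u^\ast+3\bar u(u^\ast)^2+(u^\ast)^3$ give the crucial cancellation
$$\mathscr D(\bar u+u^\ast,\bar v+v^\ast,\bar h)=(\theta_1,\theta_2) \quad \text{on } [0,T]\times[0,L].$$

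Next I would invoke Gromov's Theorem~\ref{GromovInversion} with $d=r=s=2$ and $\sigma_0=7$. Thanks to Proposition~\ref{InfinitesimalInversion2} and $(\bar u,\bar v,\bar h)_{|\mathcal Q}\in\mathcal A^{11}$, Gromov yields a family of local inverses around elements of $\mathcal A^{11}$ together with a Gromov open set $\mathcal B$. For data small enough, smallness of $u^\ast$ together with a uniform lower bound $|\bar u|\ge c>0$ on $\mathcal Q$ (from \eqref{uSuitable} and compactness of $\mathcal Q$) keeps the shifted basepoint $(\bar u+u^\ast,\bar v+v^\ast,\bar h)_{|\mathcal Q}$ inside $\mathcal A^{11}$, while the neighbourhood property forces the pair with $(-\theta_{1|\mathcal Q},-\theta_{2|\mathcal Q})$ into $\mathcal B$. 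Setting
$$(u,v,h):=\mathscr D^{-1}_{(\bar u+u^\ast,\bar v+v^\ast,\bar h)_{|\mathcal Q}}(-\theta_{1|\mathcal Q},-\theta_{2|\mathcal Q})\in\mathcal A^6,$$
the inversion property then yields $\mathscr D(u,v,h)=0$ on $\mathcal Q$. I would extend $(u,v,h)$ to all of $[0,T]\times[0,L]$ by declaring it equal to $(\bar u+u^\ast,\bar v+v^\ast,\bar h)$ outside $\mathcal Q$. Using the uniform distance from $\mathcal Q_\delta$ to $\partial\mathcal Q$ (by compactness, since $\mathcal Q_\delta\subset\mathring{\mathcal Q}$) to choose a suitable $\nu$ in Gromov's theorem, its locality and normalisation properties guarantee that the two definitions agree on a neighbourhood of $\partial\mathcal Q$, so the glued trajectory is smooth. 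The remaining requirements are then immediate: $\bar h=\square_{\nu_1}\bar u$ is supported in $[0,T]\times[a,b]$ and $\theta_i$ in $\mathcal Q_\delta\subset[0,T]\times[a,b]$, giving the support statement; $\bar u,\bar v$ vanish at $x=0,L$ and at $t=0,T$ in $\ce^1$, so the boundary and endpoint conditions transfer from $(u^\ast,v^\ast)$; and the estimate \eqref{LocalEstimate} follows from \eqref{SmallReturn}, the data-smallness of $(u^\ast,v^\ast,\theta_1,\theta_2)$, and the $\ce^0$-continuity in part~(4) of Theorem~\ref{GromovInversion} applied with $\sigma_0=\sigma_1=\eta_1=7$.

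The main obstacle is not a new analytic difficulty but the careful bookkeeping of a chain of smallness thresholds on $\eta$: the data must be small enough for $(u^\ast,v^\ast,\theta_1,\theta_2)$ to be small in $(\ce^{11})^2\times(\ce^9)^2$; small enough for the moving basepoint $(\bar u+u^\ast,\bar v+v^\ast,\bar h)$ to stay in $\mathcal A^{11}$, i.e.\ for $u^\ast$ not to destroy the sign of $\bar u$ on $\mathcal Q$; small enough for the pair to land in the Gromov open neighbourhood $\mathcal B$, which itself moves with the basepoint; and finally small enough for the $\ce^6$-estimate~\eqref{LocalEstimate} to close using \eqref{SmallReturn}. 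All four thresholds are satisfied simultaneously by shrinking $\eta$.
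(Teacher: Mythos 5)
Your proposal is correct and follows essentially the same route as the paper's proof of Theorem~\ref{Mainresult}: apply Proposition~\ref{InternalTwoControls} to produce two fictitious controls around the return trajectory, verify the cancellation $\mathscr D(\bar u+u^\ast,\bar v+v^\ast,\bar h)=(\theta_1,\theta_2)$, then apply Theorem~\ref{GromovInversion} (via Proposition~\ref{InfinitesimalInversion2}) with basepoint $(\bar u+u^\ast,\bar v+v^\ast,\bar h)_{|\mathcal Q}$ and right-hand side $(-\theta_{1|\mathcal Q},-\theta_{2|\mathcal Q})$, extend by the two-control trajectory outside $\mathcal Q$ using locality and normalisation, and close the $\ce^6$ estimate using \eqref{SmallReturn}. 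You are in fact slightly more explicit than the paper's compressed write-up on the sign fed into Gromov's inversion property and on the gluing argument, but the underlying argument is the same.
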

 
  Now let $(u_0,u_1, v_0, v_1, u_0^f, u_1^f, v_0^f, v_1^f) \in \left(\ce^{11}([0,L]) \times \ce^{10}([0,L])\right)^4$ such that \eqref{CoupleCompatibility} is satisfied. Let us note
$$M:=\|(u_0,u_1,u_0^f,u_1^f)\|_{\left(\ce^{11}\times\ce^{10}\right)^2}+\|(v_0,v_1,v_0^f,v_1^f)\|^{\frac13}_{\left( \ce^{11}\times\ce^{10}\right)^2},$$ 
and $\alpha:=\frac{\eta}{2M}$. Then, 
$$\begin{aligned}
\|\alpha u_0\|_{\ce^{11}} &\leq \eta, &\|\alpha u_1\|_{\ce^{10}} &\leq \eta, &
\|\alpha u_0^f\|_{\ce^{11}} &\leq \eta, &\|\alpha u_1^f\|_{\ce^{10}} &\leq \eta, \\
\|\alpha^3 v_0\|_{\ce^{11}} &\leq \eta, &\|\alpha^3 v_1\|_{\ce^{10}} &\leq \eta, &
\|\alpha^3 v_0^f\|_{\ce^{11}} &\leq \eta, & \|\alpha^3 v_1^f\|_{\ce^{10}} &\leq \eta,
\end{aligned}$$
and these functions satisfy \eqref{CoupleCompatibility}. We can now apply Theorem \ref{Mainresult}, and for any support and time $T>0$ compatible with that support, we get $(u,v,h)$ with initial and final conditions $(\alpha u_0, \alpha u_1, \alpha u_0^f, \alpha u_1^f, \alpha^3 v_0, \alpha^3 v_1, \alpha^3 v_0^f, \alpha^3 v_1^f)$ such that
$$\left\{\begin{aligned}
\wave_{\nu_1} u & = h, \\
\wave_{\nu_2} v & = u^3, \\
u_{|\partial \Omega} & =  0, \\
v_{|\partial \Omega} & =  0.
\end{aligned} \right.$$
Then we also have
$$\left\{\begin{aligned}
 \wave_{\nu_1} \alpha^{-1} u & = \alpha^{-1} h, \\
\wave_{\nu_2} \alpha^{-3} v & = (\alpha^{-1} u)^3, \\
\alpha^{-1} u(0) & = \alpha^{-1} u(L) =  0, \\
\alpha^{-3} v(0)& =\alpha^{-3} v(L)=  0,
\end{aligned} \right.$$
Thus, $\alpha^{-1} h$ steers $(u_0,u_1, v_0, v_1)$ to $(u_0^f,u_1^f, v_0^f, v_1^f)$ in $T$. 

\noindent Finally, to get estimate \eqref{DegenerateEstimate}, recall \eqref{LocalEstimate} 
$$
\|h\|_{\ce^6} \leq R,
$$
hence, in terms of the original control system,
$$
\begin{aligned}
\|\alpha^{-1}h\|_{\ce^6} & \leq \alpha^{-1} R \\
& \leq \frac{2R}{\eta}\left(\|(u_0,u_1,u_0^f,u_1^f)\|_{\left(\ce^{11}\times\ce^{10}\right)^2}+\|(v_0,v_1,v_0^f,v_1^f)\|^{\frac13}_{\left( \ce^{11}\times\ce^{10}\right)^2}\right).
\end{aligned}$$
This proves Theorem \ref{GlobalResult}.


 \subsection{A general criterion for internal controllability}\label{GeneralMethod}
 
 Let us now give a general definition, which gives the main criterion our return trajectories must fulfill to apply our method:
 \begin{Def}
 A \textup{suitable return trajectory} for time $T>0$ is a trajectory $(\bar{u},\bar{v},\bar{h}) \in \ce^{11}([0,T]\times [0,L])^3$ of system \eqref{SystemGeneral}, such that 
 $$\begin{aligned}
 \bar{u}(0,\cdot)=0, \ & \bar{v}(0,\cdot)=0, \\
 \bar{u}_t(0,\cdot)=0, \ & \bar{v}_t(0,\cdot)=0, \\
 \bar{u}(T,\cdot)=0, \ & \bar{v}(T,\cdot)=0, \\
 \bar{u}_t(T,\cdot)=0, \ & \bar{v}_t(T,\cdot)=0, 
 \end{aligned}$$
 $$\textup{supp} \ (\bar{u},\bar{v},\bar{h}) \subset [0,T]\times [a,b],$$
 $$\mathscr{D}(\bar{u},\bar{v},\bar{h}) =(0,0),$$
 and such that there exists $0<\delta <\min\left(T/2,(b-a)/2\right)$ satisfying \eqref{delta}, a $\delta$-covering set $\mathcal{Q}_\delta$, a smooth closed set $\mathcal{Q}$ such that $\mathcal{Q}_\delta \subset \overset{\circ}{\mathcal{Q}}$ such that
 $$\forall (t,x) \in \mathcal{Q}, \ \bar{u}(t,x) \neq 0.$$
 \end{Def}
We can now give a general statement to sum up our work on system \eqref{System}:
 \begin{Prop}\label{Method}
 Let $0\leq a < b \leq L$, and $T>0$ such that \eqref{TimeCondTh1} holds. Suppose condition \eqref{NonDegenerateCoupling} does not hold.
 If one can find a suitable return trajectory, then system \eqref{SystemGeneral} is locally controllable in time $T$ for $\left(\ce^{11} \times \ce^{10}\right)^4$ initial and final conditions, with $\ce^{6}$ trajectories, and with a $\ce^{6}$ control supported in $[0,T]\times [a,b]$.
 \end{Prop}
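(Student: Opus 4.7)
The plan is to follow exactly the three-step strategy already carried out for Theorem \ref{GlobalResult} in subsections \ref{finalreturn} and \ref{2controls}, but replacing the concrete cubic return trajectory by the abstract suitable return trajectory $(\bar u, \bar v, \bar h)$ assumed in the hypothesis. Because the definition of ``suitable'' already bundles together every geometric and regularity ingredient we used in the cubic case, the proof becomes essentially a repackaging.

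First, fix the data provided by the suitable return trajectory: the constants $\delta$, the $\delta$-covering set $\mathcal{Q}_\delta$, and the smooth closed set $\mathcal{Q}$ with $\mathcal{Q}_\delta \subset \mathring{\mathcal{Q}}$ and $\partial f_2/\partial u(\bar u,\bar v)\neq 0$ on $\mathcal{Q}$ (the last nonvanishing condition being what \emph{suitable} gives through the coupling; in the cubic case this specializes to $\bar u\neq 0$). Introduce the differential relation
$$\mathcal{A}=\left\{(u,v,h)\in\ce^2(\mathcal{Q})^3 \ \middle|\  \tfrac{\partial f_2}{\partial u}(u(t,x),v(t,x))\neq 0 \ \forall (t,x)\in\mathcal{Q}\right\},$$
which is open and nonempty since $(\bar u,\bar v,\bar h)_{|\mathcal{Q}}\in\mathcal{A}$, and the differential operator $\mathscr{D}(u,v,h)=(\square_{\nu_1}u-f_1(u,v)-h,\ \square_{\nu_2}v-f_2(u,v))$. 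The same argument as in Proposition \ref{InfinitesimalInversion} shows that $\mathscr{D}$ admits an infinitesimal inversion of order $2$ over $\mathcal{A}$ (one inverts algebraically by the formula $\tilde v=0$, $\tilde u=-h_2/\partial_u f_2(u,v)$, $\tilde h=\square_{\nu_1}\tilde u - \partial_u f_1(u,v)\tilde u - h_1$).

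Next, working around the return trajectory, write a candidate solution as $(\bar u + u,\bar v + v)$ with fictitious controls $(\bar h + h_1, h_2)$, which gives a system of the form \eqref{SystemGeneral2controls} with modified nonlinearities $\tilde f_i(u,v):=f_i(\bar u + u,\bar v+v) - f_i(\bar u,\bar v)$ satisfying $\tilde f_i(0,0)=0$. Apply Proposition \ref{InternalTwoControls} (with $k=11$) to the $\delta$-covering set $\mathcal{Q}_\delta$ to produce, for initial and final data small enough in $(\ce^{11}\times\ce^{10})^4$, two fictitious controls $(\theta_1,\theta_2)\in\ce^9$ supported in $\mathcal{Q}_\delta$ and a trajectory $(u^\ast,v^\ast)$ satisfying the desired endpoint conditions, together with the smallness estimate \eqref{ySmallness2}. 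After shrinking $\eta$, apply Theorem \ref{GromovInversion} at the base point $(\bar u + u^\ast, \bar v + v^\ast, \bar h)\in\mathcal{A}^{11}$ with $d=r=s=2$, $\sigma_0=7$, $\nu=\delta/2$, yielding the corrected trajectory $(u,v,h):=\mathscr{D}^{-1}_{(\bar u + u^\ast,\bar v + v^\ast,\bar h)}(-\theta_1,-\theta_2)\in\mathcal{A}^6$ with $\mathscr{D}(u,v,h)=0$.

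The main subtlety, as in subsection \ref{Correction}, lies in checking that this correction does not destroy the prescribed initial/final/boundary data and that it matches $(\bar u + u^\ast, \bar v + v^\ast, \bar h)$ on $\mathring{\mathcal{Q}}\setminus\mathcal{Q}_\delta$, so that we can extend $(u,v,h)$ by $(\bar u + u^\ast, \bar v + v^\ast, \bar h)$ outside $\mathcal{Q}$ and obtain a globally defined solution with $\mathrm{supp}\,h\subset[0,T]\times[a,b]$. For this step I would invoke the locality property of Gromov's theorem exactly as in \eqref{Matching}: since $(\theta_1,\theta_2)$ vanishes in a $\delta/2$-neighbourhood of any point of $\mathring{\mathcal{Q}}\setminus\mathcal{Q}_\delta$ inside $\mathcal{Q}$, the normalisation property forces the local inverse to coincide there with the base point. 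The continuity property then gives the $\ce^6$ estimate; since $(\bar u, \bar v, \bar h)$ is fixed once and for all in $\ce^{11}$, the resulting trajectory $(\bar u + u, \bar v + v, \bar h + h)$ is $\ce^6$-close to the return trajectory and in particular bounded in $\ce^6$. This establishes local controllability of \eqref{SystemGeneral} in time $T$ for $(\ce^{11}\times\ce^{10})^4$ initial and final data, with $\ce^6$ trajectories and a $\ce^6$ control supported in $[0,T]\times[a,b]$, which is exactly the statement of Proposition \ref{Method}.
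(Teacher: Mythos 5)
Your proposal is essentially the argument the paper intends; the paper states Proposition~\ref{Method} as a summary of subsections~\ref{finalreturn}--\ref{2controls} and does not re-prove it, so you are correctly reassembling those ingredients around an abstract return trajectory. You also correctly restore the sign $(-\theta_1,-\theta_2)$ (the paper's displayed formula in \S\ref{2controls} drops it), and you sensibly read the non-vanishing requirement in the definition of ``suitable'' as $\partial_u f_2(\bar u,\bar v)\neq 0$ rather than the literal $\bar u\neq 0$ (which is only the cubic specialisation), since that is what Proposition~\ref{InfinitesimalInversion} actually needs for the operator $\mathscr{D}$ attached to \eqref{SystemGeneral}.

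One step, however, is stated in a way that would fail as written: you claim ``$(\theta_1,\theta_2)$ vanishes in a $\delta/2$-neighbourhood of any point of $\mathring{\mathcal{Q}}\setminus\mathcal{Q}_\delta$ inside $\mathcal{Q}$.'' If Proposition~\ref{InternalTwoControls} is invoked with controls merely supported \emph{in} $\mathcal{Q}_\delta$, then a point of $\mathring{\mathcal{Q}}\setminus\mathcal{Q}_\delta$ lying at distance less than $\delta/2$ from $\mathcal{Q}_\delta$ has $(\theta_1,\theta_2)$ potentially nonzero inside its $\delta/2$-ball, and the locality/normalisation argument does not apply there. The paper handles this in Section~2 by separating two scales: the controls are put on $\mathcal{Q}_{2\delta}$ while the matching is performed on $\mathring{\mathcal{Q}}\setminus\mathcal{Q}_\delta$, so every relevant ball of radius $\nu=\delta/2$ misses the control support. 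You need the analogous device here: apply Proposition~\ref{InternalTwoControls} to a slightly shrunken covering set $\mathcal{Q}'_\delta$ (shrink each rectangle $[\delta,T-\delta]\times[a_i,b_i]$ inward, keeping the covering condition, which is open), take $\nu$ smaller than the gap between $\mathcal{Q}'_\delta$ and $\partial\mathcal{Q}_\delta$, and match on $\mathring{\mathcal{Q}}\setminus\mathcal{Q}_\delta$. With that adjustment the locality argument goes through and the extension by the fictitious-control trajectory outside $\mathcal{Q}$ is legitimate. (Also, a small notational slip at the very end: once $(u,v,h):=\mathscr{D}^{-1}_{(\bar u+u^\ast,\bar v+v^\ast,\bar h)}(-\theta_1,-\theta_2)$, the final trajectory is $(u,v,h)$ itself, not $(\bar u+u,\bar v+v,\bar h+h)$ -- the return trajectory is already absorbed through the base point.)
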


  \section{Further questions}
    
 \subsection{Regularity}
 
 \
 
 Our method requires somewhat specific regularities: $\ce^{11}$ ($\ce^{10}$ for the time-derivative) for the initial and final data. As is often the case when using a Nash-Noser scheme, these regularities are probably not optimal. However, if we require for example $\ce^k$ regularity for the control, $k\geq 2$, the initial and final data have to be at least one notch smoother. Indeed, note
$$w:=u_t-\nu_1 u_x,$$
 and consider to the following computation, where one requires the control and the trajectories to be $\ce^k$:
 $$\begin{array}{rl}
 \frac{d}{dt} w(t, x-\nu_1 t)&=w_t - \nu_1 w_x \\
 &= w_t + \nu_1 w_x - 2\nu_1 w_x \\
 &= h(t, x-\nu_1 t) - 2\nu_1 (u_{tx} - \nu_1 u_{xx}) \\
 &=h(t, x-\nu_1 t) - 2\nu_1 \frac{d}{dt} u_x(t, x-\nu_1 t),
 \end{array}$$
 hence, for a fixed $t>0$ and for characteristics going from $\{0\} \times [0,L]$ to $\{t\} \times [0,L]$,
 \begin{equation}\label{Characteristics}
 \int_0^t h(s, x-\nu_1 s)ds= u_t(t,x-\nu_1 t)-u_1(x) -\nu_1 u_x(t, x-\nu_1 t) + \nu_1 u_0^\prime (x) -2\nu_1 (u_x(t, x-\nu_1 t)-u_0^\prime(x)). 
 \end{equation}
 Now the left-hand side of \eqref{Characteristics} is of class  $\ce^k$, so we need to upgrade the regularity of $u_0$ to $\ce^{k+1}$, and that of $u_1$, $x \mapsto u_x(t,x)$ and $x \mapsto u_t(t,x)$ to $\ce^k$. This shows a partial derivative loss (in the space dimension) between the trajectory and the control, and, taking $t=T$, a derivative loss between the initial and final data for $u$, and the control.
 
For linear cascade systems with smooth coefficients, the same procedure can be repeated on the second equation to establish similar losses of derivatives, showing that because $u$ has increased spatial regularity, the initial and final data for $v$ have to be $\ce^{k+2} \times \ce^{k+1}$. Note that with two controls, this would not be the case, as each control ``absorbs'' the loss of derivatives in each equation.
 
Furthermore, it would be interesting to consider other iteration schemes such as the one presented in \cite{Book}, section 4.2.1, where one considers the following linear system:
 \begin{equation}
 \left\{\begin{aligned}
 \wave u&=f_1(0,0)+g_1^v(a,b) v+g_1^u(a,0)u +h \\
 \wave v&= f_2(0,0)+g_2^v(0,b) v+g_2^u(a,b)u,\end{aligned}
 \right.
 \end{equation}
 where, for $i \in \{1,2\}$,
 $$g_i^u(u,v)=\left\{ 
 	\begin{aligned} \frac{f_i(u,v)-f_i(0,v)}{u} \ &\textrm{for} \ u \neq 0 \\
    				\partial_u f_i(0,v) \ &\textrm{for} \ u =0.\end{aligned}\right.
 $$
 $$g_i^v(u,v)=\left\{ 
 	\begin{aligned} \frac{f_i(u,v)-f_i(u,0)}{v} \ &\textrm{for} \ v \neq 0 \\
    				\partial_v f_i(u,0) \ &\textrm{for} \ v =0.\end{aligned}\right.
 $$
 Then, by superposition one can restrict to the study of
 \begin{equation}
 \left\{\begin{aligned}
 \wave u&=h \\
 \wave v&=g_2^u(a,b)u. \end{aligned}
 \right.
 \end{equation}
 But ultimately, this only shifts the problem of the $\ce^k$ regularity gap between data and control, although we now have a linear system instead of a semilinear one. 
 
 On the other hand, it is now well known that in some cases, results that were obtained using the Nash-Moser iteration scheme can also be obtained through more classical iteration schemes, see for example the works of Matthias Gunther (\cite{Gunther1}, \cite{Gunther2}). It would be interesting to know if a similar do-over is possible for our result.
 
 Finally, it would be interesting to investigate a $H^k$ version of this result, using other versions of the Nash-Moser implicit function theorem.
 \subsection{Other degenerate couplings}
 
 \
 
 Our scheme of proof also allows to prove a controllability result for systems of the form
\begin{equation}\label{System2}\left\{
\begin{aligned}
\wave_{\nu_1} u & = G(u,v)+ h, \ G \in \ce^\infty(\RR^2),\\
\wave_{\nu_2} v & = u^3, \\
u_{|\partial \Omega} & =  0, \\
v_{|\partial \Omega} & =  0.
\end{aligned} \right.\end{equation}
Indeed, this simply adds a term in the definition of $\bar{h}$ when we build our return trajectory. However, $\bar{h}$ is no longer supported in $[0,T] \times [a,b]$. The other steps remain unchanged, as the additional $G$ term does not prevent the differential operator $\mathscr{D}$ from being algebraically solvable. So we get a local internal controllability result with the same time conditions, but no condition on the support of the control. Finally, if $G$ is homogeneous of degree $1$, we can use the scaling argument to deduce a global result. 

In addition to adding a coupling term to the first equation, we can also change the power of the coupling term in the second equation. There are two cases:
\begin{enumerate}
\item \textit{Even powers} As such, our method cannot work for even powers: indeed, $u^{2k}$ has nonnegative values. In particular, by the same convexity argument as in Remark \ref{VanishingPoints}, solutions to the stationary problem cannot vanish smoothly in $1$. So the perturbative approach would allow us to build smooth return trajectories only if $u$ (and thus $h$) is spatially supported in all of $[0,L]$.

Another way of answering this question would be to switch to complex values, as is done in the appendix of \cite{CGR} for the quadratic case. 

\item \textit{Odd powers} Thanks to Proposition \ref{Method}, we know that the part that requires the most work is the construction of return trajectories: say the power of the coupling is $ 2k+1, \ k\in \NN^\ast$, in order to control all the derivatives of $v$ up to $2k+1$, we would have to look for $v$ in the form
$$\sum_{i=0}^{2k+1} f_i(t) g_i\left(\frac{|x-\frac{b+a}{2}|}{\lambda}\right).$$
This would call for ever longer computations, and for now there is no indication that there might or might not be new difficulties with these additional terms.

\end{enumerate}
\subsection{Boundary controllability}

\

In this article we have explored a method to prove internal controllability with one control. However, to our knowledge there is no result for boundary controllability with one control for semilinear systems such as \eqref{SystemGeneral}. Although boundary controllability is relatively easy to establish for simple equations, or when there are the same number of controls and equations, we cannot use results on the inversion of differential operators to reduce the number of controls.

\section*{Acknowledgements}

I would like to thank Jean-Michel Coron, who suggested that I work on this problem, for his continuous support and valuable remarks. I would also like to thank Shengquan Xiang, Amaury Hayat, Pierre Lissy and Frédéric Marbach for our discussions on this problem.  Finally, I would like to express my gratitude towards the ETH-FIM for their generous support and their hospitality.

\nocite{*}

\end{document}